\DeclareMathAlphabet{\pazocal}{OMS}{zplm}{m}{n}
\def\R{\mathbb{R}}
\def\C{\mathbb{C}}
\def\N{\mathbb{N}}
\def\Z{\mathbb Z}
\def\H{\mathbb{H}}
\def\O{\mathbb{O}}
\def\0{{\bar 0}}
\def\1{{\bar 1}}
\def\mat{\mathop{\mathcal{M}}}
\def\der{\mathop{\text{\rm Der}}}
\def\aut{\mathop{\text{\rm Aut}}}
\def\esc#1{\langle #1\rangle}
\def\tr{\mathop{\hbox{\rm tr}}}
\def\l{\lambda}
\def\a{\alpha}
\def\b{\beta} 
\def\o{\otimes}
\def\s{\sigma}
\def\iu{\hm{i}}
\def\ju{\hm{j}}
\def\ku{\hm{k}}
\def\sl{\mathfrak{sl}}
\def\SO{\mathop{\hbox{\rm SO}}}
\def\GL{\mathop{\hbox{\rm GL}}}
\def\SL{\mathop{\hbox{\rm SL}}}
\def\SU{\mathop{\hbox{\rm SU}}}
\def\g{\mathfrak{g}}
\def\h{\mathfrak{h}}
\def\m{\mathfrak{m}}
\def\t{\times}
\def\tr{\text{\rm tr}}
\def\ker{\text{\rm ker}}
\def\id{\text{\rm id}}
\def\tp#1{\mathop{\langle #1\rangle}}
\def\grass{\mathop{\text{\bf Gr}}}
\def\M{\mathfrak{M}}
\def\N{\mathfrak{N}}
\def\X{\mathfrak{X}}
\def\u{\mathfrak{u}}
\def\remove#1{}
\def\diag#1{\text{diag}(#1)}
\newtheorem{theorem}{Theorem}
\newtheorem{lemma}{Lemma}
\newtheorem{proposition}{Proposition}
\newtheorem{corollary}{Corollary}
\newtheorem{remark}{Remark}
\newtheorem{definition}{Definition}
\title[Totally geodesic submanifolds]{A perspective on totally geodesic submanifolds of the symmetric space     $G_2/{\mathop{\hbox{\rm SO}}}(4)$}
\author[Draper C.]{Cristina Draper} 
\author[Mart\'in  C.]{C\'andido Mart\'in Gonz\'alez} 
\subjclass[2010]{Primary  
53C35; 
Secondary 
53C40; 
17B25. 
}
\keywords{Totally geodesic submanifolds, Lie triple systems, $G_2$, cross products, associative subalgebras}
\thanks{Supported       by the Spanish Ministerio de Ciencia e Innovaci\'on   through project  PID2023-152673GB-I00    with FEDER funds; and by
Junta de Andaluc\'{\i}a  through projects  FQM-336. The first author is also supported by   PID2020-118452GB-I00. }
\begin{document}
\maketitle
\begin{abstract}
We provide an independent proof of the classification of the maximal totally geodesic submanifolds of the symmetric spaces   $G_2$   and    $G_2/\SO(4)$, jointly with very natural descriptions of all of these submanifolds. 
  The  description of the totally geodesic submanifolds of $G_2$ is in terms of (1) principal subalgebras of $\g_2$; (2) stabilizers of nonzero points of $\R^7$; (3) stabilizers of  associative subalgebras; (4) the set of order two elements in $G_2$ (and its translations). The space $G_2/\SO(4)$ is identified with the   set of associative subalgebras of $\R^7$ 
and its maximal totally geodesic submanifolds can be described as the associative subalgebras adapted to a fixed principal subalgebra, the associative subalgebras orthogonal to a fixed nonzero vector, the associative subalgebras  containing a fixed nonzero vector, and the associative subalgebras intersecting both a fixed associative subalgebra and its orthogonal. A second description is included in terms of Grassmannians, the advantage of which is that the associated Lie triple systems are easily described in matrix form. 
\end{abstract}

\section{Introduction}

A submanifold $N$ of a Riemannian manifold $M$ is called totally geodesic
if every geodesic of $N$ is also a geodesic of $M$. 
The classification of the totally geodesic submanifolds in Riemannian symmetric spaces is a relevant problem 
  which has been addressed in concrete ambient spaces. The reference \cite{Chen} contains an illuminating  survey on the topic.   
The problem has been completely solved in the case of the symmetric space $G_2/\SO(4)$. Although some of these submanifolds are missing in the classification by Chen and Nagano in \cite{ChenyNagano}, the classification obtained by Sebastian Klein in \cite{Klein} is complete, based on the reconstruction of the curvature tensor from the Satake diagram previously considered in \cite{KleinSatake}. Both \cite{ChenyNagano} and \cite{Klein} are devoted to symmetric spaces of rank 2. We are interested in exceptional symmetric spaces, and, in this topic, the recent reference \cite{Kollross}  is relevant and of course it encloses    $G_2/\SO(4)$ as a particular case. So the question seems to be what yet another work on the same topic brings to the table. The main aim of this manuscript is to provide \emph{natural} (qualitative)   explicit descriptions of, at least, the maximal totally geodesic  submanifolds of $G_2/\SO(4)$, that is, not
strictly contained in another connected totally geodesic submanifold.
We will   draw our inspiration from the   existing classifications mentioned above to provide geometric descriptions that allow a better understanding of these submanifolds. Equivalently, we aim to give natural descriptions of the related Lie subtriples of the Lie triple system with standard envelope of type $\g_2$. Along the way, we will describe the maximal totally geodesic  submanifolds of the compact Lie group $G_2$ too.
This work is part of an ongoing project on lesser-known symmetric spaces that are quotients of F4, using tensors, forms, and non-associative algebras, all root-free descriptions.
\medskip

Next we summarize more precisely some of the above mentioned  relevant literature on the topic. 
In the summary \cite{Klein09}  by S.~Klein of his own  paper \cite{Klein},   maximal  totally geodesic submanifolds of both   $G_2$ and   the symmetric space $G_2/\SO(4)$ appear. The author tries to be explicit, 
giving   isometric embeddings for the various congruence classes of totally geodesic submanifolds, or at least
 by describing the tangent spaces of the totally geodesic submanifolds. The information in \cite[\S2.9,\,\S2.12]{Klein09},
    concerning $G_2/\SO(4)$ and $G_2$, is reproduced in Table~\ref{tab:Klein}.  
\begin{table}[]
    \centering
    \begin{tabular}{cl||lc}
&$G_2/\SO(4)$&$G_2 $&\vspace{2pt}\\
\hline\vspace{-8pt}\\
5&$\SU(3)/\SO(3)_{\mathrm{srr}=\sqrt{3}}$&$G_2/\SO(4)_{\mathrm{srr}=1}$&8\\
4&$(\mathbb{S}^2_{r=1}\t\mathbb{S}^2_{r=1/\sqrt{3}})/\Z_2$&$(\mathbb{S}^{{3}}_{r=1}\t\mathbb{S}^3_{r=1/\sqrt{3}})/\Z_2$&6\\
4&$\C P^2_{\kappa=3/4}$&$\SU(3)_{\mathrm{srr}=\sqrt{3}}$&8\\
2&$\mathbb{S}^2_{r=\frac{2\sqrt{21}}3}$&$\mathbb{S}^3_{r=\frac{2\sqrt{21}}3}$&3\\
\hline\\ 
\end{tabular}
   \vspace{-2pt} \caption{Maximal t.g.s. of $G_2/\SO(4)$ and $G_2$ \cite{Klein09}}
    \label{tab:Klein}
\end{table}
In order to give information on the isometry type, the notation is as follows: $\mathbb{S}^n_r$ denotes the $n$-dimensional sphere of radius $r$, 
$\C P^2_{\kappa }$ is the complex projective space of constant holomorphic sectional
curvature $4\kappa$,
and the subscript $_{\mathrm{srr}=a}$ is used for describing  the metric by stating the length $a$ of the shortest restricted root of the space.
S.~Klein reconstructs completely the curvature tensor starting from  the Satake diagram of the Riemannian symmetric space under study,  because for him, \emph{in exceptional symmetric spaces,   the explicit description of the
exceptional Lie algebra $\g$ as a matrix algebra is too unwieldy to be useful.} Note that he does it with the help of Maple.

A second work with strong results on the topic  is \cite{Kollross}, where Kollross and Rodr\'\i guez-V\'azquez classify all maximal totally geodesic submanifolds in exceptional symmetric
spaces up to isometry. The main tool is an invariant called the \emph{Dynkin index} of a  totally geodesic
embedding  of semisimple symmetric spaces.  
This manuscript is quite complete, but it does not include the natural descriptions we are looking for. 
Table~\ref{tab:gallegos} is exactly the one appearing in \cite[Table~5]{Kollross} 
on the maximal totally geodesic submanifolds of symmetric spaces of $G_2$-type. 
 A totally geodesic submanifold  is  reflective if it is the connected component of the fixed point set of  an involutive isometry.
The authors work in the non-compact setting due to the fact that duality preserves totally geodesic submanifolds. This makes the appearance of the two tables different. In fact, we have included the dimensions in Table~\ref{tab:Klein} to facilitate comparison. Compare too with our results in Proposition~\ref{pr_tgm_casoG2} and Remark~\ref{re_loscocientesquesalen} (closer to Table~\ref{tab:Klein}, since our context is compact too).
There is some confusion with the presumed \emph{sphere} of dimension 3 in Table 1 (it is missing in \cite{ChenyNagano}, and moreover,   there appear two maximal totally geodesic submanifolds of $G_2$ of dimension 3 
in the summary of \cite{Klein}, which is inconsistent with \cite{Klein09}), so we devote some words in Remark~\ref{re_SOnoesfera} to the clarification of this issue.

\begin{table}[]
    \centering
\begin{tabular}{lllll}
$M$&$\Sigma$&Dynkin index&Reflective?&$\dim\Sigma$\vspace{2pt}\\
\hline\vspace{-7pt}\\
$G^2_2/\SO_4$&$\SL_2(\mathbb R)/\SO_2\t\SL_2(\mathbb R)/\SO_2$&$(3,1)$&Yes&$4$\\
&$\SL_3(\R)/\SO_3$&1&No&$5$\\
&$\SU_{1,2}/\mathrm{S}(\mathrm{U}_1\t\mathrm{U}_2)$&1&No&$4$\\
&$\SL_2(\R)/\SO_2$&28&No&$2$\vspace{2pt}\\
\hline\vspace{-7pt}\\
$G_2(\C)/G_2$&$\SL_2(\C)/ \SU_2\t  \SL_2(\C)/ \SU_2$&$(3,1)$&Yes&$6$\\
&$\SL_2(\C)/ \SU_2$&$28$&No&$3$\\
&$\SL_3(\C)/ \SU_3$&$1$&No&$8$\\
&$G_2^2/\SO_4$&$1$&Yes&$8$\\
\hline\\
\end{tabular}
\caption{Maximal t.g.s. of symmetric spaces of $G_2$-type \cite{Kollross}}
    \label{tab:gallegos}
\end{table}
 \smallskip

This document is structured as follows. Section~\ref{se_prelim} contains some preliminary results on Lie triple systems; totally geodesic submanifolds of a reductive semi-Riemannian homogeneous space;   the Lie group $G_2$ as the stabilizer of a generic 3-form (and its relation with cross products); and the 8-dimensional exceptional symmetric space $\M_S$. We would like to apologize because it is clear that some of these topics are obvious to some researchers, and the rest to others: the reason for including them all together is just so that researchers in both areas, algebra and geometry, can read us. 
We work in Section~\ref{se_stg_G2}  with the description of $G_2$ as the stabilizer of the 3-form to review the results on this group, Proposition~\ref{pr_tgm_casoG2} giving the maximal totally geodesic submanifolds  related to  the maximal Lie triple subsystems described in Proposition~\ref{pr_LTS_G2}.
 This   is the best known case, but it is useful to study it carefully 
  in order to have the tools to deal with  $\M_S$; specially we need some background on principal subalgebras of $\g_2$, related with some totally geodesic submanifolds of both $G_2$ and $\M_S$. (More precisely, related with the lost submanifolds in \cite{ChenyNagano} recovered in \cite{Klein}.)
In Section~\ref{se_stg_MS}, $\M_S$ is considered as the set of associative subalgebras of $(\R^7,\t)$, and its totally geodesic submanifolds  are described in these terms in  Theorem~\ref{pr_tgm_MSv1}, based on the description of the Lie triple subsystems in 
Theorem~\ref{pr_LTS_MSv1}. Note that we describe not only the maximal sub-LTS \emph{up to conjugation}, but all of them. The description as homogeneous spaces becomes very natural too, in Remark~\ref{re_loscocientesquesalen}.
Finally, Section~\ref{se_stg_MS_free} works with $\M_S$ as a submanifold of $\grass\nolimits_3(\R^7)$, denoted by $\M'_S$ to avoid confusions. We describe the Lie triple system with envelope $\g_2$ (the tangent space to $\M'_S$) as the vector space $\sl_3(\R)$ with a convenient triple product in Proposition~\ref{pr_LTSconenvolventeg2}. This allows us to provide descriptions of the maximal subtriples
in matricial terms. The totally geodesic submanifolds  in terms of selfadjoint projections  appear in Corollary~\ref{co_v2}.

Observe that our results provide an independent proof of the classification of the maximal totally geodesic submanifolds  of both $G_2$ and the $8$-dimensional symmetric space.

All through this work the word space will mean finite-dimensional space and of course the same applies to algebras and triple systems.


\section{Preliminary results}\label{se_prelim}

\subsection{Lie triple systems  }\label{se_basicosonLTS}

The day that Descartes had the illuminating idea of assigning a couple of numbers to a point in the plane\footnote{It is said that he was sick in bed and followed the evolution of a fly in the ceiling. 
One of the corners of the ceiling could be used as a reference point.}, he inaugurated a new era in mathematics. He fired the starting
shot of a race consisting in assigning algebraic objects to geometric ones. The idea that the allegedly simple algebraic object could shed light into the geometric one is of course the main motivation for this "algebraization" task. In our context the algebraic structure that we can attach to a totally geodesic submanifold of a Riemaniann symmetric space $M$, is that of a Lie triple subsystem of the Lie triple system associated to $M$ (\cite{Sagle}).
In order to give further details on these ideas we need to define some notions.

An {\em abstract Lie triple system} (see \cite{yamaguti1957algebras}) is defined to be a vector
space $V$ over a field $K$  with an operation  
$V \times V \times V
\to V$ such that $(X,Y,Z)\mapsto [X,Y,Z]$ satisfying
\begin{description}
\item{(i)} $[\cdot,\cdot,\cdot]$ is trilinear over $K$;
\item{(ii)}  $[X,Y,Z]=-[Y,X,Z]$;
\item{(iii)}  $[X,Y,Z] + [Y,Z,X] + [Z,X,Y]=0$,
for any $X,Y,Z\in V$. 
\end{description} An abstract Lie triple system satisfying the additional identity 
\begin{description}
    \item{(iv)} $[X,Y,[A,B,C]]=[[X,Y,A],B,C]+[A,[X,Y,B],C]+[A,B,[X,Y,C]$,
\end{description}
is called a  {\em Lie triple system} (see \cite[Proposition 2.1]{yamaguti1957algebras} and \cite{Lister}).

If $\{e_i\}$ is a basis for an abstract Lie triple system $V$ over the field $K$, then for any choice of the subscripts we have $[e_i,e_j,e_k]:=\sum_{i,j,k,l}c_{ijkl}e_l$, for some scalars
$c_{ijkl}\in K$ which are called the {\em structure constants} of $V$.
An example of an abstract Lie triple system which is not a Lie triple system is the two-dimensional $K$-vector space $K^2$ with
triple product given by the structure constants 
$c_{1211}=1$, $c_{1212}=c_{1221}=c_{1222}=0$.

Any Lie algebra $L$ gives rise to a Lie triple system by defining $[X,Y,Z]:=[[X,Y],Z]$ for any $X,Y,Z\in L$. This triple system will be called the Lie triple system of the algebra $L$. 
 Another way to produce a Lie triple system (over fields of characteristic other than $2$) is to consider a Lie algebra $L$ and an automorphism $\phi\colon L\to L$ with $\phi^2=1_L$. Then $L=\hbox{Fix}(\phi)\oplus \hbox{Antifix}(\phi)$   is a $\mathbb{Z}_2$-grading on $L$,  for     $\hbox{Antifix}(\phi):=\{x\in L\colon \phi(x)=-x\}$. In this case $T=\hbox{Antifix}(\phi)$ is a Lie triple system with the triple product $[[X,Y],Z]$ again.  An \emph{ideal} $I$ of a Lie triple system $T$ is a subspace such that $[I,T,T], [T,I,T], [T,T,I]\subseteq I$. A Lie triple system $T$ is said to be \emph{simple} if $[T,T,T]\ne 0$ and its unique ideals are $0$ and $T$.  
As one learns from \cite{Lister},  the following comprise all simple Lie triple systems (finite-dimensional over algebraically closed fields of characteristic zero):
\begin{itemize}
\item[ (i)] the L.t.s. of simple Lie algebras; 
\item[(ii)] the L.t.s. of elements skew-symmetric relative to automorphisms of order-two in simple Lie algebras   (equivalently, the odd parts of $\mathbb{Z}_2$-graded simple Lie algebras).     
\end{itemize}
Thus, Lister reduces the classification problem to the determination of order-two automorphisms of simple Lie algebras. 

Recall that a derivation $d$ of a Lie triple system $(T,[\cdot,\cdot,\cdot])$ over a field $K$ is a $K$-linear map $d\colon T\to T$ such that for any $x,y,z\in T$ one has $d([x,y,z])=[d(x),y,z]+[x,d(y),z]+[x,y,d(z)]$. The space of derivations of $T$, denoted $\hbox{Der}(T)$, is a Lie algebra for the usual Lie bracket. The maps $L_{x,y}\colon T\to T$ such that $L_{x,y}(z):=[x,y,z]$ (where $x,y,z\in T$) are derivations of $T$, due to (iv). The Lie subalgebra  of $\hbox{Der}(T)$ generated by all the $L_{x,y}$'s,
is usually denoted by $\hbox{InnDer}(T)$ and its elements are termed {\em inner derivations}.
In fact $\hbox{InnDer}(T)$ is an ideal of $\hbox{Der}(T)$. Furthermore,  we can construct a Lie algebra 
$$\g(T):=\hbox{InnDer}(T)\oplus T$$
by defining a product $[(f,x),(g,y)]:=([f,g]+L_{x,y},f(y)-g(x))$ for any $f,g\in\hbox{InnDer}(T)$, $x,y\in T$ 
(see \cite[Theorem 2.1]{yamaguti1957algebras}). The Lie algebra $\g(T)$ is called the {\em envelope}, or standard envelope,  of $T$. It is a $\Z_2$-graded Lie algebra with even part $\hbox{InnDer}(T)$ and odd part $T$. Thus we can speak of the $\Z_2$-graded envelope $\g(T)$ to be more explicit. Notice that when we take $x,y,z\in T$, we have $[x,y,z]=[[x,y],z]$ where the binary Lie bracket $[\cdot,\cdot ]$ is that of $\g(T)$.  In other words, the property (iv) makes that any $T$ can be one-to-one embedded into a Lie algebra $\g(T)$ such that $T$ is subtriple of it.
\smallskip

  An {\em associative triple system} $A$ is a vector space equipped with a trilinear 
map $A\times A\times A\to A$ such that 
$(x,y,z)\mapsto \tp{xyz}$ satisfies  $\tp{\tp{xyz}uv}=\tp{x\tp{uzy}v}=\tp{xy\tp{zuv}}$
for arbitrary elements $x,y,z,u,v\in A$. 
If $A$ is an associative triple system relative to $\esc{xyz}$, 
then $A^-$ (equal to $A$ as a vector space) endowed with the triple product $[x,y,z]$ defined as 
$$
[x,y,z]:=  \esc{xyz}-\esc{yxz}-\esc{zxy} + \esc{zyx},
$$
 is a Lie triple system that we will call the {\em  skew-symmetrization} of $A$.
 Thus, the associative triple system of rectangular matrices $\mat_{n,m}(\R)$ with triple product $\tp{xyz}=xy^tz$ (where $y^t$ stands for the transposition of the matrix $y$) gives rise to the Lie triple system $\mat_{n,m}(\R)^-$. Following \cite{Lister}, this Lie triple system is simple, with standard envelope isomorphic to the orthogonal algebra $\mathfrak{so}(n+m)$.
 Also, if $A$ is a triple system with product $\esc{\cdots}$, then $A_*$ denotes the so called {\em twin triple system} whose product is the opposite $-\esc{\cdots}$. Dealing with triple systems over the reals, we do not have $A\cong A_*$ (for complex triple systems we do have $A\cong A_*$).


\subsection{   Curvature and triple systems}\label{se_curvatura}

As one learns for instance from \cite{Sagle}, if $M$ is 
a $C^\infty$-manifold and  $D$ is a covariant differentiation operator (that is, a connection) defined on $M$, then for each pair  $X, Y$  of $C^\infty$-vector fields defined on  $M$, there is  a 
$C^\infty$-vector field $D_XY$. Then we can define the torsion  tensor of $D$ by
$   T (X, Y) = D_XY - D_YX - [X, Y]$.
It  satisfies $T(X,Y)=-T(Y,X)$ and it is $C^\infty$-linear.
One can also define the
curvature tensor, $R$, for $C^\infty$-vector fields $X$, $Y$ and $Z$  by
$$R(X,Y)Z=[D_X,D_Y]Z-D_{[X,Y]}Z.$$
Thus the   curvature measures the failure of the map $X\mapsto D_X$ to be a Lie algebra homomorphism.
If $T(X,Y)=0$ for any two vector fields, then the curvature tensor $R$ satisfies the first Bianchi identity or algebraic Bianchi identity:
$$
R(X,Y)Z + R(Y,Z)X + R(Z,X)Y = 0.
$$ 
Summarizing the above discussion, if $T(X,Y)=0$ for any $X,Y$, then the space $\mathfrak{X}( M )$ of all vector fields on $M$
is an   abstract   Lie triple system   relative to $[X,Y,Z]:=R(X,Y)Z$. 
This is precisely the case 
for a semi-Riemannian manifold $M$ and the Riemann curvature tensor (\cite[Chapter I, Section 2, p.~28]{Loos})  
$R\colon \X ( M ) \times \X ( M ) \times \X ( M ) \to \X ( M )$ defined by 
$R ( X , Y ) =  [ \nabla_X , \nabla_Y ]  - \nabla_{[ X , Y ]}$
 for  $\nabla$   the Levi-Civita connection. 
 That is, $\X(M)$ is an   abstract
Lie triple system relative to the operation 
\begin{equation}\label{eq_eltripledeR}
[X,Y,Z] := R(X,Y)Z.
\end{equation}

 Furthermore, following Sagle in \cite[Section 3]{Sagle}, if    $M'$ is 
 a  totally geodesic nonsingular submanifold of the  semi-Riemannian manifold $M$ 
 (so the metric tensor of $M$ when restricted to the tangent space $T_pM'$ for all $p\in M'$ is nondegenerate on $T_pM'$),
 then if $X,Y,Z$ are in $\X(M')$, one has $[X,Y,Z] = R(X,Y)Z\in \X(M')$, so that 
  $(\X(M'),[\cdot,\cdot,\cdot])$ is also an  abstract   Lie triple subsystem of $\X(M)$. 
   A converse is also proved in \cite[Theorem, p.\,11-12]{Sagle} in the setting of connected reductive homogeneous spaces. 
   More precisely, assume that    $M=G/H$,  for $G$   a connected Lie group and $H$ a closed connected Lie subgroup, 
   where the tensor metric is of course $G$-invariant, 
   and that
 there exists a subspace $\m$ of $\g$ such that $\g = \h\oplus\m$  and  $[\h,\m]\subset \m$,
 being  $\g$ and $\h$ the Lie algebras of the Lie groups $G$ and $H$. 
  Then, for any nonsingular subspace $\m'\subset\m$   closed for the triple product~\eqref{eq_eltripledeR}  such that $[\m',\m']\subset\h+\m'$,
  there exists  a totally geodesic nonsingular submanifold $M'$ of $M=G/H$ containing $p_0=H$ such that  $T_{p_0}M'= \m'$.  
  In conclusion, there is a   correspondence
  between totally geodesic   submanifolds of semi-Riemannian reductive homogeneous spaces
  and certain (abstract) subtriples of  their tangent spaces.  
 \smallskip

   The link between totally geodesic submanifolds and Lie triple subsystems is easier to materialize in the setting of Riemmanian symmetric spaces.  
   Note first that, if   $M$ is a locally symmetric space, i.e. the covariant derivative of the Riemann curvature tensor is $0$, then the product in Eq.~\eqref{eq_eltripledeR} satisfies (iv) too  (\cite[p.~110]{yamaguti1957algebras}).    This means that  $\X(M)$, with the triple product given by the curvature, is not only an abstract triple system but   actually 
   a Lie triple system (see also \cite[Proposition~2.1, Theorem~2.1]{yamaguti1957algebras}).  
   Recall that a Riemannian manifold  $M$ is 
a {\em symmetric space} if, for
every point $p\in M$ there exists an isometry $s_p$ (called {\em geodesic reflection}) such that  $s_p(p)=p$
and $s_{*p} =-\id_{T_pM}$.  
If $M$ is  connected, and $G=\mathrm{Iso}(M)$ is its group of isometries, the geodesic symmetry at a point $o$ gives raise to
an involutive automorphism $\sigma\colon G\to G$ ($g\mapsto s_ogs_o$) whose differential induces an order-two automorphism $\phi=(d\sigma)_e$ of the Lie algebra $\g=T_eG$ of $G$, called the Cartan involution. 
By already mentioned standard arguments, $\phi$ provides a $\mathbb{Z}_2$-grading $\g=\h\oplus\m$ 
with even part $\h=\hbox{Fix}(\phi)$ and odd part  $\m=\hbox{Antifix}(\phi)$ (eigenspace of eigenvalue $-1$). 
Moreover, the curvature tensor of $M$ at $o$ can be expressed as $R_o(X,Y,Z)=-[[X,Y],Z]$.
It follows that the Lie triple system $\m$ inherited by the one of $\g$ coincides with the twin of the Lie triple system given by the curvature. 
A Lie triple system and its twin are not in general isomorphic, but notice that   there is no general agreement about the sign of the curvature tensor in the literature. 
As $M$ is a reductive homogeneous space, we can apply the above results to get, if $S$ is a totally geodesic submanifold of $M$ and $p_0\in S$, that the subspace $\mathfrak{s}:=T_{p_0}S$ of $\m$ is a Lie triple subsystem   (see a direct argument for instance in \cite[Ch.~IV, Theorem~7.2]{Helgason}).
It is also proved in the aforementioned reference that if we take a Lie triple subsystem $\mathfrak{s}$ of $\m$ and define $S=\exp(\mathfrak{s})$, then $S$ has a natural differentiable structure relative to which it is totally geodesic submanifold of $M$ such that $T_{p_0}S=\mathfrak{s}$. This in particular explains  the  abundance of totally geodesic submanifolds in symmetric spaces.

From the previous paragraph one can realize that the evolution of ideas since the original coordinatization of flies in Descartes' ceiling room, until the described Lie triple systems 
viewpoint in the classification of totally geodesic submanifolds of symmetric Riemannian manifolds, is ample.
Taking advantage of this approach, some authors have  
performed thoughtful and meritoriously detailed classifications of totally geodesic submanifolds in different settings. 
The authors of \cite{Kollross} summarize some of the existing classifications and moreover they classify totally geodesic submanifolds in exceptional symmetric spaces.


\subsection{Cross products and $G_2$}

It is well-known that $\R^7$ can be endowed with a {\em cross-product}, that is, an anticommutative  bilinear map $\times\colon\R^7\times\R^7\to\R^7$ such that $\esc{x\t y,x}=0=\esc{x\times y,y}$ and 
$$
\esc{x\times y,x\times y}=\det \begin{pmatrix}
 \esc{x,x}&\esc{x,y}\\\esc{y,x}&\esc{y,y}   
\end{pmatrix}, 
$$ where $\esc{\cdot,\cdot }$ denotes the usual scalar product in $\R^7$.
(Nonzero cross products exist only in dimensions $3$ and $7$, see \cite{CrossProducts}.)

For instance, if $\{e_i:i=1\dots 7\}$ denotes the canonical basis of $\R^7$, one could consider the cross product given by the anticommutativity and
\begin{equation}\label{uncross}
  e_{i} \times e_{i+1}=e_{i+3},\quad
e_{i+1} \times e_{i+3}=e_{i},\quad
e_{i+3} \times e_{i}=e_{i+1},\quad  
\end{equation}
for the sum of indices taken modulo 7.
 
A cross product satisfies 
 (see \cite[p.\,1]{Kobayashi}) 
\begin{equation}\label{props_cross}
(x\t y)\t z+x\t(y\t z)=2\esc{x,z}y-\esc{y,z}x-\esc{x,y}z,
\end{equation}
and $\esc{x\times y,z}=\esc{x,y\times z}$. 
In particular, we can consider the alternate trilinear map $\Omega\colon\R^7\times\R^7\times\R^7\to \R$ given by 
$$\Omega(x,y,z):=\esc{x\t y,z}.$$ 
Note that  $\Omega$ can be identified with a 3-form $\wedge^3\R^7\to\R$
and 
consider the canonical action $\GL(7,\R)\times (\wedge^3\R^7)^*\to (\wedge^3\R^7)^*$ given as usual by  
$
F\cdot \omega=\omega ( F^{-1}\o F^{-1}\o F^{-1}).
$
The Lie group $G_2$ can be defined as the stabilizer $G_\Omega=\{F\in \GL(7,\R):\Omega=F\cdot \Omega\}$, 
and its tangent Lie algebra consists of the elements
$d\in\mathfrak{gl}(7,\R)$ such that
\begin{equation}\label{eq_glomega}
\Omega(d(x),y,z)+\Omega(x,d(y),z)+\Omega(x,y,d(z))=0.
\end{equation}
As 
the elements in $G_\Omega$ are isometries, then the linear maps tangent to it belong to $\mathfrak{so}(\R^7,\esc{\cdot,\cdot })\equiv \mathfrak{so}(7)$.
Hence the above condition is equivalent to
$$
d(x\t y)=d(x)\t y+ x\t d(y),
$$
for arbitrary $x,y\in\R^7$, taking into consideration that $\esc{\cdot,\cdot }$ is nondegenerate. In other words, $\mathfrak{g}_2$ can be identified both with $\mathfrak{gl}(7,\Omega)$ (defined by \eqref{eq_glomega}) and   with $\der(\R^7,\t)$, which is a Lie subalgebra of $ \mathfrak{gl}(7,\R)$.

One can introduce the cross product $\times$ in $\R^7$ in a coordinate free fashion. 
Recall that a {\em generic} 3-form is defined to be an element in 
$(\wedge^3\R^7)^*$ whose orbit under the $\GL(7,\R)$-action above is open.  In 1900, F.~Engel 
proved that there are just two orbits of generic 3-forms, while only one over $\mathbb C$ (read in \cite{Ilka} some nice details on this story).  
Now, as mentioned in \cite[(2.2) and Lemma 3.9]{anderson2011chern}, any trilinear form $\gamma\colon \wedge^3\R^7\to\R$ induces a symmetric bilinear form $\b_\gamma$\ by composing $(u,v)\mapsto -\frac{1}{3}\gamma(u,\cdot,\cdot)\wedge\gamma(v,\cdot,\cdot)\wedge\gamma$ with an isomorphism $\wedge^7(\R^7)^*\cong\R$
(see, alternatively,  \cite[Lemma~4.10]{MiG2}).
In any case, out of the two orbits of  generic $3$-forms, there is only one whose associated bilinear form is definite. Furthermore if we denote by $\omega$ one representative of this orbit, there is a cross product $\times_\omega$ such that $\beta_\omega(x\times_\omega y, z)=\omega(x,y,z)$.  
This cross product is precisely the one we are dealing with at the beginning of the section. It is essentially unique, though not unique in absolute terms. 

 For any generic 3-form $\omega$, the isotropy group $G_\omega=\{F\in \GL(7,\R):\omega=F\cdot \omega\}$
   has dimension 14 and it is besides simple.  F.~Engel proved that the respective isotropy groups of the two aforementioned orbits are isomorphic to $G_2$ and to 
the   noncompact real form    $G_2^2\subset \SO(4,3)$.

\begin{remark}\label{re_octonions}
{\rm
   We can recall here the close relationship between cross products and octonion algebras. Fixed a nonzero cross product $(\R^7,\t)$, take $\O=\R 1\oplus\R^7$ with product $xy=-\esc{x,y} 1+x\t y$ if $x,y\in\R^7$ and where $1$ is the unit.
   Then $(\R^7,\t)$ is a Malcev algebra,
    and $\O$ is an octonion algebra, that is,
   a unital $8$-dimensional algebra  endowed with a  nondegenerate quadratic form $n\colon\O\to\R$ ($n(x)=\esc{x,x}$), which is multiplicative, that is, $n(xy)=n(x)n(y)$. The two orbits of generic 3-forms (equivalently, of cross products) give two classes of nonisomorphic octonion algebras, namely, the usual octonion division algebra, related to the definite norm, and the so called split octonion algebra, whose related quadratic form has neutral signature. Conversely, if $\O$ is an octonion algebra, it is necessarily a quadratic algebra, so that every element $x\in\O$ satisfies a degree 2 equation $x^2-t(x)x+n(x)1=0$, for $t\colon\O\to\R$ a linear map called {\em trace}. Then the set of zero trace elements $\O_0$ is a copy of $\R^7$ and the product $x\t y=xy-\frac12t(xy)1$ provides a cross product in $\O_0$.
    In what follows, $\O$ will denote the division algebra obtained by adding the unit to $(\R^7,\t)$, for $\t$ the concrete cross product in \eqref{uncross}.
   
   From the above we know that 
     passing through octonions   is not completely necessary   to work with the compact Lie group $G_2$, 
     we can work directly with a generic 3-form or with its related cross product. (The same can be applied to the symmetric space $G_2/\SO(4)$.)  
}\end{remark}


\subsection{The exceptional 8-dimensional symmetric space  $G_2/\SO(4)$}\label{sub_M_S}

 Assume we have fixed a cross product, as that one in \eqref{uncross}.
A usual description of the 8-dimensional symmetric space is that of the set of {\em associative subalgebras} of $\R^7$, 
which means 
$\M_S=\{V\le\R^7:\dim V=3,\,V\t V\subset V\}$. 
Each  tridimensional  $V\le\R^7$ with $V\times V\subset V$ induces an associative algebra $\R\oplus  V$ for 
the usual product $(\l,v)(\l',v'):=(\l\l'-\esc{v,v'}, \l v'+\l'v+v\times v')$. Technically speaking, $(V,\t)$ is not an associative algebra but it induces one and that is why we use the terminology \lq\lq associative subalgebra". It may be assumed as a terminology abuse despite its meaning is clear. 
For  instance, for any $i=1,\dots,7$ the subspace $V^i=\langle\{ e_i,e_{i+1},e_{i+3}\}\rangle$ is such an associative subalgebra.

As $G_2=G_\Omega$ preserves the cross product, it acts on $\M_S$ and moreover, the action is transitive  (\cite[Corollary 2.2.4]{Springer}). In fact, for any other associative subalgebra $V$, take $\{\iu,\ju,\iu\t\ju\}$ an orthonormal basis of $V$ and a unit vector $\ell\in V^\perp$, and then there exists a  unique $F\in G_2$ such that $F(e_1)=\iu$, $F(e_2)=\ju$ and $F(e_3)=\ell$. 
In particular any $V\in\M_S$ determines a $\Z_2$-grading on the algebra
$(\R^7,\t)$, because not only $V\t V\subset V$ is fulfilled, but also
\begin{equation}\label{eq_Z2grad}
    V\t V^\bot\subset V^\bot,\qquad  V^\bot\t V^\bot\subset V,
\end{equation}
since the same is true for $V^1=\esc{\{e_1,e_2,e_4\}}$ and $(V^1)^\perp=\esc{\{e_3,e_5,e_6,e_7\}}$, in view of  \eqref{uncross}.
Thus, $V$ and $V^\bot$ are, respectively, the even and odd part of the graded algebra
$\R^7=V\oplus V^\bot.$ (Equation \ref{eq_Z2grad} can be alternatively deduced by applying carefully the formula in \eqref{props_cross} to different triples of elements.)
In relation with Remark~\ref{re_octonions}, $V\in\M_S$ if and only if the associative algebra $Q=\R\oplus  V$ is a quaternion subalgebra of $\O=\R 1\oplus\R^7$, which is $\Z_2$-graded with even part $\O_{\bar0}=Q$ and odd part $\O_{\bar1}=Q^\bot=V^\bot$.

The isotropy group of any element $V\in\M_S$ by the $G_2$-action can be identified with $\SO(V^\perp)\cong\SO(4)$ by $F\mapsto F\vert_{V^\perp}$  (\cite{Akbulut,procAlb}). 
In fact, the elements in the group $G_2$ are isometries so that any $F$ leaving invariant $V$ also leaves invariant the orthogonal subspace, but   $V^\bot$ generates $\R^7$ as an algebra, so that the  restriction $F\vert_{V^\perp}$ allows us to recover all the information of $F$. Thus $\M_S$   is identified with the homogeneous space $G_2/\SO(4)$, which in particular  endows $\M_S$ with a manifold structure  (see, alternatively, Remark~\ref{re_subalgebras}).

In order to describe this manifold in terms of the 3-form, let us characterize when a $3$-dimensional subspace  $V $ of $\R^7$ belongs to $\M_S$. 
Clearly, \eqref{eq_Z2grad} gives the necessary condition   
  $\Omega(V^\bot,V^\bot,V^\bot)=0$   (equivalently, $\Omega(V,V,V^\bot)=0$). According to \cite[Lemma~3.1]{procAlb}, the converse is true, that is, if a 4-dimensional vector subspace $W$
of $\R^7$ satisfies $\Omega(W,W,W)=0$, then $W^\bot\in\M_S$.
Consequently,   $\M_S$ is in bijective correspondence with  
$ \{W\le\R^7:\dim W=4,\,\Omega(W,W,W)=0 \}$ (just the description of the symmetric space considered in \cite{procAlb}),
by means of $V\mapsto V^\bot$. This identification is compatible with the natural action of $G_2=G_\Omega$ on both sets.

 Recall that the
Grassmanian $\grass_n(\R^7)$ as a set  is nothing but the set of all $n$-dimensional 
subspaces of $\R^7$. Each $n$-dimensional subspace $V\subset \R^7$ can be identified with the (orthogonal) projection $\pi\colon\R^7\to\R^7$ onto $V$. But $\pi$ is a self-adjoint idempotent of $\hbox{End}_{\R}(\R^7)$ and has trace $n$. So we could say 
\begin{equation}\label{fat}
\grass\nolimits_n(\R^7)=\{\pi\in \hbox{End}_{\R}(\R^7)\colon \pi^2=\pi, \pi^\sharp=\pi, \tr(\pi)=n\},
\end{equation}
where $\tr$ is the trace of the linear map and $\pi^\sharp$ denotes the adjoint of $\pi$ relative to the Euclidean inner product. If we prefer a matrix representation of the Grassmanian, we could say that it is
the set of $7\times 7$ idempotent symmetric matrices of $\mat_7(\R)$ with trace $n$.  

Notice that the condition $x\in V$ is just $\pi_V(x)=x$ when $\pi_V$ denotes the orthogonal projection onto $V$.
Also $\pi_V(x)=0$ if and only if $x\in V^\bot$.   Thus   $\Omega (\pi_V\o\pi_V\o(1-\pi_V))=0$  and $\Omega (\pi_{V^\bot}\o\pi_{V^\bot}\o\pi_{V^\bot})=0$ 
(the notation $\pi_1\otimes\pi_2\otimes\pi_3$ stands for the map $(\R^7)^{3\otimes}\to (\R^7)^{3\otimes}$ such that $x\o y\otimes z\mapsto \pi_1(x)\o\pi_2(y)\o\pi_3(z)$). Taking into account all of this, we can identify the symmetric space $\M_S$ with $\{\pi\in \grass\nolimits_4(\R^7)\colon \Omega(\pi\o\pi\o\pi)=0\}$ and with
\begin{equation}\label{emese}
\M'_S:=\{\pi\in \grass\nolimits_3(\R^7)\colon \Omega(\pi\o\pi\o\pi')=0\},
\end{equation}
for $\pi'=1-\pi$.
 For any $\pi\in\M_S'$, Equation~\eqref{eq_Z2grad} says that $\R^7=V_{0}\oplus V_{1}$ is a $\Z_2$-grading, for  
 $V_0=\ker(\pi')=\text{Fix}(\pi)$
 and $V_1=\text{Fix}(\pi')=\ker(\pi)$, that is, $V_\lambda$ coincides with the eigenspace of $\pi'$ related to the eigenvalue $\lambda$ for both $\lambda=0,1$ (denote  $V_{0,\pi'}$ and $V_{1,\pi'}$ if there is ambiguity).
 From this perspective, it is again clear how the group $G_2= \{F\in\text{\rm GL}(\R^7)\colon F\cdot\Omega=\Omega\}$ acts on $\M'_S$ by $F\cdot \pi=F\pi F^{-1}$ (take into account that $F^{-1}=F^\sharp$). The mentioned diffeomorphism between $\M_S$ and $\M'_S$ is also compatible with the $G_2$-action.

 \begin{remark}\label{re_subalgebras}
 {\rm 
 The above manifold structure of $\M_S$ is a particular case of the following situation.
     If $\R^n$ is a real algebra relative to a product
$\mu\colon \R^n\otimes\R^n\to\R^n$,   consider the Grassmanian $\grass_k(\R^n)$ of all $k$-dimensional subspaces of $\R^n$. Again it can be identified with  projections  $\pi\colon \R^n\to \R^n$ in the Euclidean space $\R^n$, that is, $\pi^2=\pi$, $\pi^\sharp=\pi$ (self-adjoint), $\tr(\pi)=k$. These equalities translate into algebra equations on the entries of the matrix of $\pi$ relative to any basis.
Thus $\grass_k(\R^n)$ is an algebraic set and hence a manifold. Consider then the subset $\hbox{\bf Subalg}_k(\R^n):=\{\pi\in\grass_{k}(\R^n)\colon \pi'\mu\pi^{\otimes 2}=0\}$ where $\pi^{\otimes 2}:=\pi\otimes\pi$, $\pi':=1-\pi$. Any element $\pi\in\hbox{\bf Subalg}_k(\R^n)$ defines a subalgebra $\hbox{Fix}(\pi)\subset\R^n$, and conversely any subalgebra (whose underlying space is associated to some $\pi$) satisfies
$\pi'\mu\pi^{\o 2}=0$. To prove that $\hbox{\bf Subalg}_k(\R^n)$ is a submanifold of $\grass_k(\R^n)$, take into account that  $\pi'\mu\pi^{\o 2}=0$ gives a system of algebraic equations of degree $3$: fixing a basis $\{e_i\}$ of $\R^n$, if we write $\pi(e_i)=x_i^j e_j$ (following Einstein convention), then $\pi'\mu\pi^{\o 2}=0$ if and only if $x_i^a x_j^b \omega_{a,b}^k(1-x_k^q)=0$ for any $i,j,q$ (the $\omega^k_{a,b}$'s are the structure constants relative to the given basis). So $\hbox{\bf Subalg}_k(\R^n)$ is both an algebraic set and  a manifold.  
}
 \end{remark}

 
\section{Maximal totally geodesic submanifolds of $G_2$} \label{se_stg_G2}

Although the Lie triple subsystems of $\g_2$ are described in 
\cite[Theorem~5.2]{Klein} (not only the maximal ones), at least by means of roots (with the help of Satake diagrams), 
we are interested in introducing descriptions and notations with the focus on the following section on the symmetric space $G_2/\SO(4)$,
as well as with the desire to obtain more natural descriptions of the triples.

\subsection{Some considerations on principal subalgebras}\label{se_ppaldefs}

First, a key to understand the weird cases, missing in \cite{ChenyNagano},  is to recall some facts on principal subalgebras
(not so weird in the end, when one understands that this is a very frequent subalgebra). Recall that, for $\g$ a complex simple Lie algebra, a {\em principal subalgebra} of $\g$ is a three-dimensional simple subalgebra (necessarily isomorphic to $\sl_2(\C)$)
which contains a principal nilpotent element. We will usually shorten its name to TDS.
 An element in a Lie algebra is said {\em nilpotent} if its adjoint map is a nilpotent linear map. 
The set $\{e\in\g: e\  \mathrm{ nilpotent}\}$ is an algebraic variety with only one dense orbit, called the principal orbit. Its elements are  the {\em principal} nilpotent elements.
 The conjugate classes of TDS  in $\g$ are in   one-to-one correspondence with the conjugate classes of non-zero nilpotent elements in $\g$ \cite[Corollary~3.7]{Kostant}.
It happens that an invariant distinguishing the conjugacy classes of TDS  is the so called {\em index}, introduced by Dynkin in the fifties  \cite{Dynkin52}
as a tool to distinguish different (non-conjugate) embeddings of isomorphic subalgebras.
In the case of $\g_2$, the principal subalgebras are those of index 28.

If now $\g$ is a real simple Lie algebra, a three-dimensional subalgebra is called  principal if its complexified algebra so is. 
In the case of the compact $\g_2$, the set of principal subalgebras turns out to be a homogeneous space of type $G_2/\SO(3)^{\textrm{irr}}$ (\cite[\S3.9]{procAlb}). Simply because all the principal subalgebras of the compact Lie algebra $\g_2=\der(\R^7,\t)$
are conjugated and, for $\h$ one of them,  
then the isotropy group
$$
H^\h:=\{F\in \GL(\R^7,\t)=G_2:\mathrm{Ad}F(\h)\subset \h\}
$$
is a 3-dimensional simple group isomorphic to $\SO(3)$ (see Remark~\ref{re_SOnoesfera}). (For any $F\in \GL(n,\R)$, $X\in\mathfrak{gl}(n,\R)$, we denote $\mathrm{Ad}F(X)=FXF^{-1}$.) In particular, if one chooses  a TDS at random, it is most likely to be a principal subalgebra.
As can be concluded for instance from \cite[Theorem~2.2]{procAlb},  
$\h$ is principal if $\R^7$ is an absolutely irreducible $\h$-module. In particular (see Lemma~\ref{le_maximalehr} and Proposition~\ref{prop_todascasoalg} below), principal TDS  are maximal subalgebras and also maximal as LTS. 

\subsection{Lie triple systems of $\g_2$}

Recall that we are considering the Lie algebra $\g_2$, but with its structure of LTS, i.e., $[x,y,z]=[[x,y],z]$. We are interested in the maximal LTS of $\g_2$. 
For the sake of completeness, we will  include in Proposition~\ref{prop_todascasoalg}  a self-contained proof that any maximal Lie triple subsystem of a simple real or complex Lie algebra $\g$ is either a maximal subalgebra or the the odd part of a nontrivial $\Z_2$-grading. Our knowledge of the maximal subalgebras in \cite{Dynkin52} together with the knowledge of the order two automorphisms from \cite[Chapter~8]{Kac}  allows us to apply it in further settings. In the case of $\g_2$, the characters appearing in this way in Proposition~\ref{pr_LTS_G2} will   play a leading role in the subsequent geometrical descriptions.

\begin{lemma}\label{le_maximalehr}
Take $\g=\h\oplus\m $   a reductive decomposition of a real or complex Lie algebra $\g$ (that is, $\h$ is subalgebra and $[\h,\m]\subset\m$).
If $\h$ is semisimple and $\m$ is an irreducible $\h$-module, then $\h$ is a maximal subalgebra  of $\g$. 
\end{lemma}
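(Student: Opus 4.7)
The plan is a short direct argument by contradiction: assume there is an intermediate subalgebra $\h \subsetneq \mathfrak{k} \subseteq \g$ and show $\mathfrak{k} = \g$ using only the irreducibility of $\m$ as an $\h$-module.

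First I would use the decomposition $\g = \h \oplus \m$ to write any $x \in \mathfrak{k}$ as $x = h + m$ with $h \in \h \subset \mathfrak{k}$ and $m \in \m$, so that $m = x - h \in \mathfrak{k} \cap \m$. This gives $\mathfrak{k} = \h \oplus (\mathfrak{k} \cap \m)$. Because $\mathfrak{k}$ strictly contains $\h$, the projection yields at least one nonzero such $m$, so $\mathfrak{k} \cap \m \neq 0$.

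Next I would check that $\mathfrak{k} \cap \m$ is an $\h$-submodule of $\m$: if $h \in \h \subset \mathfrak{k}$ and $m \in \mathfrak{k} \cap \m$, then $[h,m] \in \mathfrak{k}$ because $\mathfrak{k}$ is a subalgebra, and $[h,m] \in \m$ because the decomposition is reductive, i.e.\ $[\h,\m] \subset \m$. Irreducibility of $\m$ as an $\h$-module then forces $\mathfrak{k} \cap \m = \m$, giving $\mathfrak{k} \supseteq \h + \m = \g$, hence $\mathfrak{k} = \g$, as desired.

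I do not expect any real obstacle here; the argument uses neither semisimplicity of $\h$ nor any further structure (the semisimplicity hypothesis is presumably included because it will hold in every application of the lemma in the next subsections, e.g.\ for principal \textbf{TDS} inside $\g_2$). If one wanted to emphasize where semisimplicity could shorten things, one could alternatively invoke Weyl's theorem to split $\mathfrak{k}$ as an $\h$-module into $\h \oplus \mathfrak{k}'$ with $\mathfrak{k}' \subset \m$ an $\h$-submodule, but the elementary projection argument above avoids this and works verbatim over $\R$ or $\C$.
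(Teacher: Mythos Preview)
Your proof is correct and actually takes a more elementary route than the paper's. The paper invokes Weyl's theorem: using that $\h$ is semisimple in characteristic zero, it chooses an $\h$-module complement $\m'$ of $\h$ inside the intermediate subalgebra $\h'$, then another complement $\m''$ of $\h'$ in $\g$, and argues that $\m\cong\m'\oplus\m''$ as $\h$-modules, contradicting irreducibility. Your argument bypasses complete reducibility altogether: since $\h\subset\mathfrak{k}$, the projection onto $\m$ along $\h$ already shows $\mathfrak{k}=\h\oplus(\mathfrak{k}\cap\m)$, and reductivity makes $\mathfrak{k}\cap\m$ an $\h$-submodule. This is strictly more general (semisimplicity of $\h$ is never used) and shorter; the paper's approach, by contrast, is the one that genuinely needs the semisimplicity hypothesis stated in the lemma. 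Your remark that one \emph{could} use Weyl's theorem instead is exactly what the paper does.
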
 

\begin{proof}
Assume that $\h\subsetneq\h'\subsetneq \g$ with $[\h',\h']\subset\h'$. As we are working on a characteristic zero field  (the lemma is valid in this more general setting), every $\h$-module is completely reducible. This can be applied to the own $\h$ as a submodule of $\h'$. So there exists an $\h$-module  $\m'\ne0$ such that $\h'=\h\oplus\m'$. Now $\h'$ is an $\h$-submodule of $\g$ and we can take $\m''\ne0$ such that $\g=\h'\oplus\m''$. By uniqueness, $\m\cong\m'\oplus\m''$, which contradicts the irreducibility of $\m$ as $\h$-module.
\end{proof}

 Note that if a subalgebra $\h$  of a Lie algebra $\g$   is  maximal as subtriple of the LTS $\g$, then it is evidently maximal as subalgebra. Under the conditions we are interested in, the converse will be true.
 \begin{proposition}\label{Leo}
Let $\g$ be a   simple Lie algebra,   either complex or real   of dimension greater than 3. 
Then any  maximal subalgebra $\h$  of $\g$   is a maximal subtriple of the LTS $\g$.
\end{proposition}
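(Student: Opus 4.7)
The plan is a proof by contradiction: assume there exists a Lie triple subsystem $T$ with $\h\subsetneq T\subsetneq \g$, and derive a contradiction. The first observation is the purely algebraic fact that $T+[T,T]$ is always a Lie subalgebra of $\g$; this follows from the subtriple identity $[[T,T],T]\subseteq T$ together with a routine Jacobi calculation giving $[[T,T],[T,T]]\subseteq [T,T]$. Since $T+[T,T]$ strictly contains $\h$, the maximality of $\h$ as a subalgebra forces $T+[T,T]=\g$.

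Next I would split on whether $T$ is itself a subalgebra. If $[T,T]\subseteq T$, then $T$ is a subalgebra strictly containing $\h$, so $T=\g$ by maximality of $\h$, contradicting $T\subsetneq \g$. Otherwise, after enlarging $T$ to a maximal Lie triple subsystem of $\g$, I invoke the classification of maximal subtriples of simple Lie algebras (Lister's theorem, reproved in this paper as Proposition~\ref{prop_todascasoalg}): $T$ is forced to be the odd part $\g_{\bar 1}$ of a nontrivial $\Z_2$-grading $\g=\g_{\bar 0}\oplus \g_{\bar 1}$. Then $\h\subseteq T=\g_{\bar 1}$ gives $[\h,\h]\subseteq \h\cap [\g_{\bar 1},\g_{\bar 1}]\subseteq \g_{\bar 1}\cap \g_{\bar 0}=0$, so $\h$ must be abelian.

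The heart of the proof is now to rule out that a simple Lie algebra of dimension greater than $3$ admits an abelian maximal subalgebra. Since $Z(\g)=0$, applying the maximality of $\h$ to the subalgebras $C_\g(\h)$, $N_\g(\h)$ and $C_\g(x)$ (for any $0\ne x\in \h$) yields $C_\g(\h)=N_\g(\h)=C_\g(x)=\h$; so the Jordan components $x_s,x_n$ of $x$ both lie in $\h$. If some nonzero $x_s\in \h$ is semisimple, then $\h=C_\g(x_s)$ is the Cartan subalgebra through $x_s$, and $\h+\g_\alpha$ is a subalgebra of dimension $\mathrm{rank}(\g)+1<\dim \g$ (using $\dim \g>3$, i.e.\ rank at least $2$) strictly between $\h$ and $\g$, contradicting maximality. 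Otherwise $\h$ consists entirely of nilpotent elements; then Jacobson--Morozov embeds any $0\ne x\in \h$ into an $\sl_2$-triple $\{x,H,y\}$, and the Jacobi identity $[[H,h],x]=-2[h,x]=0$ (from $[h,x]=0$) puts $[H,h]\in C_\g(x)=\h$ for every $h\in \h$, so $H\in N_\g(\h)=\h$; but $H$ is nonzero semisimple in an algebra of pure nilpotents, a contradiction.

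I expect the main obstacle to be this last step, the exclusion of abelian maximal subalgebras. The structural reductions in the first two paragraphs rely essentially on Lister's classification to package the case where $T$ is not itself a subalgebra; the abelian case-analysis then uses Jordan decomposition and Jacobson--Morozov in an essential way, and the hypothesis $\dim \g>3$ enters precisely to guarantee that the rank is at least $2$ so that the Cartan-plus-root-space argument goes through (indeed, the compact real form $\mathfrak{su}_2$ in dimension $3$ is a genuine exception whose maximal subalgebra is the abelian Cartan).
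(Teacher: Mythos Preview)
Your proof is correct in broad outline and follows the same architecture as the paper's---assume $\h\subsetneq T\subsetneq\g$, pass to the envelope $T+[T,T]=\g$, force $\h$ to be abelian, then rule that out---but it takes two detours that the paper avoids.

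First, you invoke Proposition~\ref{prop_todascasoalg} (Lister's dichotomy) after enlarging $T$ to a maximal subtriple. Besides being a forward reference, this enlargement step has a small gap: the dichotomy also allows the enlarged triple to be a maximal \emph{subalgebra}, and you never say why that case is impossible (it is, since it would strictly contain the maximal subalgebra $\h$). More to the point, the paper sidesteps all of this: it shows directly that $T\cap[T,T]$ is an ideal of $\g=T+[T,T]$, hence zero by simplicity, so $\g=T\oplus[T,T]$ is already a $\Z_2$-grading with $\h\subset T$---no enlargement, no appeal to a classification. Your $[\h,\h]=0$ then drops out immediately.

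Second, your endgame is heavier than needed. Once $\h$ is abelian and $N_\g(\h)=\h$ (which you do observe), $\h$ is \emph{by definition} a Cartan subalgebra, and the paper simply quotes the fact that Cartan subalgebras are never maximal in complex simple $\g$ and are maximal in real simple $\g$ only when $\dim\g=3$. Your Jordan-decomposition/Jacobson--Morozov route is a valid alternative, but note that the step ``$\h+\g_\alpha$ is a subalgebra'' is written with complex root spaces in mind; in the real (especially compact) case $\g_\alpha$ is not a real subspace, so you would need to replace this by, say, the centralizer of a non-regular element of $\h$ to produce a strictly intermediate real subalgebra. The paper avoids this technicality by appealing to the known result.
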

\begin{proof}
Assume that there is a Lie triple subsystem $T$ of $\g$ with $\h\subsetneq T\subsetneq\g$. If we denote by  $T^2:=[T,T]$, 
then $[T^2,T]\subset T$ and Jacobi identity gives $[T^2,T^2]\subset T^2$, so
we have a subalgebra $T+T^2$ of $\g$ such that $\h\subset T+T^2$. By maximality of $\h$ we have either $\h=T+T^2$ or $T+T^2=\g$. But the first possibility gives $\h=T$ which we have ruled out from the beginning. Thus $\g=T+T^2$.
Notice that $T\cap T^2$ is an ideal of the algebra $\g$: 
$$[T\cap T^2,\g]\subset[T\cap T^2,T]+[T\cap T^2,T^2]$$ and each of the summands is contained in $T\cap T^2$. By simplicity of $\g$ we have either $T\cap T^2=\g$ or $T\cap T^2=0$. But the first possibility implies $\g=T$ which is impossible. Thus we have $T\cap T^2=0$ and $\g=T\oplus T^2$ with $\h\subset T$. Consequently $[\h,\h]\subset T^2\cap T=0$
and $\h$ is abelian. On the other hand the normalizer $N_\g(\h)=\{x\in\g:[x,\h]\subset \h\}$ is a subalgebra containing $\h$. Again we have two possibilities, either $N_\g(\h)=\g$ or $N_\g(\h)=\h$.
In the first case   $\h$ is an ideal of $\g$ but $\g$ is simple. So $\h=0$ or $\h=\g$ and both possibilities are contradictory. If $\h$ is its own normalizer in $\g$, then $\h$ is a Cartan subalgebra of $\g$. But in the complex case a Cartan subalgebra is never a maximal subalgebra, and in the real case a Cartan subalgebra is maximal only if $\g$ is a TDS. 
\end{proof}

\begin{remark}
    Note, for $\{h_1,h_2,h_3\}$ a basis of $\mathfrak{so}(3)$ with $[h_i,h_{i+1}]=h_{i+2}$, that $\h=\esc{h_1}$ is a maximal subalgebra of $\mathfrak{so}(3)$
    (if $[h_1,sh_2+th_3]=\lambda(sh_2+th_3)$, then $\lambda^2=-1$);
    while it is not a maximal subtriple.  
    Furthermore, $ T=\esc{h_1,sh_2+th_3}$ is a  subtriple for all scalars $s,t$ since
    $[h_1,sh_2+th_3,h_1]=sh_2+th_3$ and $[sh_2+th_3,h_1,sh_2+th_3]=(s^2+t^2)h_1$. 
\end{remark}

At some point we will need to know what all the LTS of the algebra $\mathfrak{so}(3)$  look like. 
\begin{lemma}\label{le_LTSapatadas}
    For any $0\ne X,X'\in \mathfrak{so}(3)$, then $\esc{\{X,X'\}}$ is a LTS.
\end{lemma}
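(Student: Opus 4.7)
The plan is very short because the identification $\mathfrak{so}(3)\cong(\R^3,\times)$ reduces everything to the vector triple product.

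First I would dispose of the degenerate case: if $X$ and $X'$ are linearly dependent, then $\langle\{X,X'\}\rangle$ is a one-dimensional subspace $\langle X\rangle$, and $[X,X,X]=[[X,X],X]=0$, so the span is trivially closed under the triple product. Hence I may assume $X,X'$ linearly independent, so that $V:=\langle\{X,X'\}\rangle$ is two-dimensional.

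Next I would use the well-known identification of $(\mathfrak{so}(3),[\cdot,\cdot])$ with $(\R^3,\times)$ equipped with the standard Euclidean inner product: under this identification the bracket is the cross product and the inner product is $\mathrm{ad}$-invariant. The key input is the vector triple product identity
$$A\times(B\times C)=\esc{A,C}B-\esc{A,B}C,$$
valid for all $A,B,C\in\R^3$. Rewriting this in Lie-algebraic notation and using antisymmetry yields
$$[[A,B],C]=-[C,[A,B]]=\esc{C,A}B-\esc{C,B}A,$$
so in particular
$$[[A,B],C]\in\langle\{A,B\}\rangle$$
for every triple $A,B,C\in\mathfrak{so}(3)$.

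Applying this to any $Y,Z,W\in V$, I immediately get $[Y,Z,W]=[[Y,Z],W]\in\langle\{Y,Z\}\rangle\subset V$, so $V$ is closed under the triple product $[\cdot,\cdot,\cdot]$, i.e.\ a LTS. There is no genuine obstacle here; the only thing worth emphasising is that the same computation implicitly gives the geometric picture that every two-plane in $\mathfrak{so}(3)$ is stable under the double bracket precisely because its perpendicular direction $[X,X']$ acts on the plane by a skew operator (a rotation in the plane), which is the content of the formula $[[X,X'],Y]=\esc{Y,X}X'-\esc{Y,X'}X\in V$.
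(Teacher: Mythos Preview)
Your proof is correct and somewhat more direct than the paper's. The paper first puts $X$ into normal form: any nonzero element of $\mathfrak{so}(3)$ is orthogonally conjugate to a scalar multiple of $E_{12}-E_{21}$, so one may complete $rX$ to a standard basis $\{rX,Y,Z\}$ satisfying $[rX,Y]=Z$, $[Y,Z]=rX$, $[Z,rX]=Y$; closure of $\langle\{rX,sY+tZ\}\rangle$ under the double bracket is then verified by the explicit computations $[rX,sY+tZ,rX]=sY+tZ$ and $[sY+tZ,rX,sY+tZ]=(s^2+t^2)rX$ already carried out in the preceding remark. You instead invoke the vector triple product identity $[[A,B],C]=\esc{C,A}B-\esc{C,B}A$ directly (via $\mathfrak{so}(3)\cong(\R^3,\times)$), which in one line shows that any double bracket lies in the span of its first two arguments and hence in $V$. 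What you gain is a coordinate-free one-step argument; what the paper's normal-form approach buys is an explicit ``nice'' basis $\{h_1,h_2,h_3\}$ with cyclic relations, which the paper reuses verbatim later in the proof of Theorem~\ref{pr_LTS_MSv1} when handling the principal case.
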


\begin{proof}
    The normal form of a skew-symmetric matrix with real coefficients says that any such matrix $A\ne 0$ is of the form
$$  P\,\diag{k_1 S,\ldots,k_n S,0,\ldots,0}\, P^{-1},$$ where $S=\tiny\begin{pmatrix}0 & 1\\-1 & 0\end{pmatrix}$, $k_i\in\R\setminus\{0\}$ and $P$ an orthogonal matrix, $P^{-1}=P^t$. In particular, in $\mathfrak{so}(3)$ any nonzero element is of the form 
$$
P{\tiny \begin{pmatrix}0 & k & 0\\-k & 0 & 0\\0 & 0& 0\end{pmatrix}}P^{-1}
$$
for $k\in\R\setminus\{0\}$ and an orthogonal matrix $P$. So if $0\ne X\in \mathfrak{so}(3)$ then there is a nonzero real $r$ and an orthogonal matrix $P$ such that $rX=P(E_{12}-E_{21})P^{-1}$. Then $Y=P(E_{23}-E_{32})P^{-1}$ and 
$Z=P(E_{31}-E_{13})P^{-1}$ form a triple $\{rX,Y,Z\}\subset \mathfrak{so}(3)$ (contained in $\mathfrak{so}(3)$ because $P$ is an orthogonal matrix)
such that $[rX,Y]=Z$, $[Y,Z]=rX$ and $[Z,rX]=Y$. As in the above remark, the fact that $\esc{\{rX,sY+tZ\}}$ is a LTS easily follows.
\end{proof}

Now we investigate another way to produce maximal Lie triple subsystems.
\begin{lemma}\label{lema2}
    Let $\g$ be a simple Lie algebra over $\R$ or $\C$. For any  nontrivial $\Z_2$-grading $\g=\g_\0\oplus\g_\1$, then $\g_\1$ is a maximal Lie triple subsystem of $\g$.
\end{lemma}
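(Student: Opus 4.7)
The plan is to argue directly: assume $T$ is a Lie triple subsystem with $\g_\1\subsetneq T\subseteq\g$ and deduce $T=\g$. Two preliminary observations pave the way. First, $\g_\1$ is itself a Lie triple subsystem: $[\g_\1,\g_\1,\g_\1]=[[\g_\1,\g_\1],\g_\1]\subset[\g_\0,\g_\1]\subset\g_\1$. Second, and more important, $\g_\1+[\g_\1,\g_\1]$ is a nonzero ideal of $\g$; closure under bracketing with $\g_\0$ is immediate, closure under bracketing with $\g_\1$ reduces to $[\g_\1,[\g_\1,\g_\1]]\subset\g_\1$ by Jacobi, and nontriviality of the grading gives $\g_\1\ne 0$. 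By simplicity of $\g$ this ideal must equal $\g$, and comparing $\Z_2$-graded parts forces the key identity $\g_\0=[\g_\1,\g_\1]$.

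Set $T_0:=T\cap\g_\0$. I would first check that $T$ inherits the grading: for $x=x_0+x_1\in T$, the component $x_1$ lies in $\g_\1\subset T$, so $x_0=x-x_1\in T_0$; hence $T=T_0\oplus\g_\1$, and the strict inclusion $\g_\1\subsetneq T$ forces $T_0\ne 0$. The core step is to verify that $J:=T_0+[T_0,\g_\1]$ is an ideal of $\g$. The inclusion $[\g_\0,T_0]\subset T_0$ is just $[[\g_\1,\g_\1],T_0]\subset T_0$, which holds because this is the triple bracket $[T_0,\g_\1,\g_\1]$ of elements of $T$, lying in $\g_\0$. The remaining stabilities $[\g_\0,[T_0,\g_\1]]\subset[T_0,\g_\1]$ and $[\g_\1,[T_0,\g_\1]]\subset J$ follow by Jacobi expansions: the first reduces to $[\g_\0,T_0]\subset T_0$ and $[\g_\0,\g_\1]\subset\g_\1$; the second decomposes as $[[\g_\1,T_0],\g_\1]+[T_0,[\g_\1,\g_\1]]$, where the first summand is again a triple bracket of elements of $T$ landing in $\g_\0$, hence in $T_0$, and the second equals $[T_0,\g_\0]\subset T_0$.

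Since $J$ is a nonzero ideal of the simple Lie algebra $\g$, one has $J=\g$. Matching $\Z_2$-graded components in $T_0+[T_0,\g_\1]=\g_\0\oplus\g_\1$ gives $T_0=\g_\0$ (and as a bonus $[T_0,\g_\1]=\g_\1$), whence $T=T_0\oplus\g_\1=\g$, proving maximality. The main obstacle, and the decisive trick, is the passage from triple closure to an honest Lie-algebra ideal: binary brackets $[y,z]$ with $y,z\in\g_\1$ must be reinterpreted as triple brackets $[[y,z],\,\cdot\,]=[y,z,\,\cdot\,]$ in order to invoke the hypothesis $[T,T,T]\subset T$, and this is exactly what lets the identity $\g_\0=[\g_\1,\g_\1]$ (itself a consequence of simplicity) convert the mere LTS closure of $T$ into the ideal property of $J$.
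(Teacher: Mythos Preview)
Your proof is correct, and it takes a genuinely different route from the paper's. Both arguments begin with the key identity $\g_\0=[\g_\1,\g_\1]$, obtained from simplicity. From there the paper works with $T^2:=[T,T]$: since $\g_\0\subset T^2$, one gets $\g=T+T^2$, and then $T\cap T^2$ is shown to be an ideal of $\g$, hence either $\g$ (forcing $T=\g$) or $0$; in the latter case $\g=T\oplus T^2$ yields the dimension contradiction $\dim\g=\dim\g_\0+\dim\g_\1<\dim T^2+\dim T=\dim\g$. You instead decompose $T=T_0\oplus\g_\1$ with $T_0=T\cap\g_\0\ne 0$ and build the ideal $J=T_0+[T_0,\g_\1]$ directly from the excess piece $T_0$, so that simplicity gives $T_0=\g_\0$ without any contradiction or dimension count. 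Your argument is more constructive and pinpoints exactly how the graded structure forces $T_0$ to swallow all of $\g_\0$; the paper's $T\cap T^2$ trick, on the other hand, is the same device used in the adjacent Propositions~\ref{Leo} and~\ref{prop_todascasoalg}, so it has the advantage of unifying several maximality arguments under one mechanism.
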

\begin{proof}
Assume that $\g_\1\subsetneq T\subsetneq\g$ where $T$ is a Lie triple subsystem. 
We denote as before $T^2=[T,T]$.
Recall that $\g_\0=[\g_\1,\g_\1]$, because $[\g_\1,\g_\1]+\g_\1$ is a nonzero ideal of $\g$. 
Consequently $\g_\0\subset T^2$ and
$\g=\g_\0+\g_\1\subset T^2+T$. Hence
 we have $\g=T+T^2$ and from this, $T\cap T^2$ is an ideal of $\g$ because
 $$[\g, T\cap T^2]\subset[T,T\cap T^2]+[T^2,T\cap T^2]\subset T\cap T^2.$$
 Next $T\cap T^2=\g$ or $T\cap T^2=0$ but the first possibility yields the contradiction $T=\g$. Thus $T\cap T^2=0$ and we have 
 $\g=T^2\oplus T$. But this is also contradictory:
 $$\dim(\g)=\dim(\g_\0)+\dim(\g_\1)<\dim(T^2)+\dim(T)=\dim(\g).$$
 So $\g_\1$ is a maximal Lie triple subsystem.
 \end{proof}

 So far we have two sources of maximal Lie triple subsystems of a simple real or complex Lie algebra $\g$ (of finite dimension): (i) maximal subalgebras, (ii) odd parts of nontrivial $\Z_2$-gradings. We prove next that any maximal Lie triple subsystem of $\g$ is of any of these forms:

 \begin{proposition}\label{prop_todascasoalg}
 Let $\g$ be a
 simple real or complex Lie algebra $\g$ (of finite dimension) and $T$ a maximal Lie triple subsystem of $\g$. Then $T$ is either  a maximal subalgebra or the odd part of a $\Z_2$-grading on $\g$.
\end{proposition}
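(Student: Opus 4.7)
The plan is to mimic the dichotomy already observed in the proof of Proposition~\ref{Leo}, namely to argue on whether the derived space $T^2:=[T,T]$ of the maximal LTS $T$ is contained in $T$ or not, and to use $\Z_2$-grading structure theory to handle the second case.

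First I would check that $T+T^2$ is always a subalgebra of $\g$. Indeed, $[T,T^2]\subset T$ by the LTS axiom, and Jacobi gives $[T^2,T^2]\subset T^2$, so $[T+T^2,T+T^2]\subset T+T^2$. In particular $T+T^2$ is itself a Lie triple subsystem of $\g$ containing $T$. This leads to a clean split.

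Case (a): $T^2\subset T$. Then $T$ is actually a subalgebra of $\g$. Any subalgebra of $\g$ containing $T$ is a LTS containing $T$, so by maximality of $T$ as a LTS it is either $T$ or $\g$. Hence $T$ is a maximal subalgebra of $\g$.

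Case (b): $T^2\not\subset T$. Then $T\subsetneq T+T^2$, and by maximality of $T$ as LTS we are forced to $T+T^2=\g$. The key step is to show that $I:=T\cap T^2$ is an ideal of $\g$. Using $[I,T]\subset[T^2,T]\subset T$ (LTS axiom) together with $[I,T]\subset[T,T]=T^2$ we get $[I,T]\subset I$; similarly $[I,T^2]\subset[T^2,T^2]\subset T^2$ and $[I,T^2]\subset[T,T^2]\subset T$, so $[I,T^2]\subset I$. Therefore $[I,\g]=[I,T+T^2]\subset I$, and by simplicity either $I=0$ or $I=\g$; the latter would force $T=\g$, contradicting maximality (and nontriviality). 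Hence $I=0$ and $\g=T\oplus T^2$ as vector spaces. Setting $\g_{\bar 0}:=T^2$ and $\g_{\bar 1}:=T$, the inclusions $[T^2,T^2]\subset T^2$, $[T^2,T]\subset T$, $[T,T]=T^2\subset T^2$ shown above are exactly the axioms for a $\Z_2$-grading, and $T=\g_{\bar 1}$ is its odd part. The grading is nontrivial since $T\ne0$ (maximality rules out the trivial LTS when $\dim\g>0$).

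The main conceptual obstacle is the argument in Case (b) that $T\cap T^2$ is an ideal; it hinges on combining the defining LTS identity $[T^2,T]\subset T$ with the trivial inclusion $[T,T]\subset T^2$, giving two different containments whose intersection yields the ideal property. Once this is in place the rest is bookkeeping and the dichotomy of the two cases delivers the statement.
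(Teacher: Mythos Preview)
Your proof is correct and follows essentially the same approach as the paper's own proof: split on whether $T^2\subset T$, show $T+T^2$ is a subalgebra, use maximality of $T$ to get $T+T^2=\g$, prove $T\cap T^2$ is an ideal, and conclude with the direct-sum $\Z_2$-grading. You supply a bit more detail (explicitly verifying the ideal property and the grading axioms), but the argument is the same.
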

\begin{proof}
It $T^2\subset T$ then $T$ is a subalgebra and hence it is a maximal subalgebra (or otherwise $T$ would not be a maximal subtriple). So assume 
 $T^2\not\subset T$. Then $T\subsetneq T^2+T$ which is a subalgebra and in particular a subtriple. 
 By maximality of $T$, we have $\g=T^2+T$ so that $T^2\cap T$ is (as in the above lemma) an ideal of $\g$. Now the simplicity of $\g$ gives two possibilities:
 (i) $\g=T^2\cap T$ or (ii) $T^2\cap T=0$. Since $T\ne\g$ we have  $T^2\cap T=0$, so that $\g=T^2\oplus T$ and this is a nontrivial $\Z_2$-grading ($T^2\ne0$). 
\end{proof}

 Now we apply these results to our setting, the complex Lie algebra $\g_2$.

\begin{proposition}\label{pr_LTS_G2}
The maximal Lie triple subsystems of 
$\g_2=\der(\R^7,\Omega)=\{d\colon \R^7\to\R^7:\Omega(d(x),y,z)+\Omega(x,d(y),z)+\Omega(x,y,d(z))=0\}$, are just
 \begin{enumerate} 
  \item Any principal subalgebra;  
        \item $\h_2^\ell:=\{d\in\g_2:d(\ell)=0\}$, for a fixed $0\ne\ell\in\R^7$;  
        \item   $\h_4^V:=\{ d\in\g_2:d(V^\bot)\subset V^\bot\}$, for a fixed $V\in\M_S$;  
   \end{enumerate}
   the three of these subalgebras considered as LTS; and
   \begin{enumerate}    
        \item[(4)]   $\m_4^V:=\{ d\in\g_2:d(V^\bot)\subset V,\, d(V)\subset V^\bot \}$, for a fixed $V\in\M_S$.
    \end{enumerate}
    These LTS have dimensions 3, 8, 6 and 8, respectively.
    \end{proposition}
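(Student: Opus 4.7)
The plan is to apply Proposition~\ref{prop_todascasoalg} to $\g_2$, which reduces the classification to two cases: (A) maximal Lie subalgebras of $\g_2$, and (B) odd parts of nontrivial $\Z_2$-gradings on $\g_2$.

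\emph{Case (A).} I would invoke Dynkin's classification \cite{Dynkin52}: up to conjugation, the maximal proper subalgebras of the compact $\g_2$ are a principal $\sl_2$ (of index $28$), a copy of $\mathfrak{su}(3)$, and a copy of $\mathfrak{so}(4)\cong\mathfrak{su}(2)\oplus\mathfrak{su}(2)$. Each one admits a natural identification inside $\g_2=\der(\R^7,\t)$. The principal subalgebra is case~(1); under it $\g_2$ decomposes as the $3$-dimensional adjoint plus an irreducible $11$-dimensional module, so Lemma~\ref{le_maximalehr} reconfirms maximality. Because the compact $G_2$ acts transitively on the unit sphere of $\R^7$ with isotropy $\SU(3)$, the infinitesimal stabilizer of any $0\ne\ell\in\R^7$, namely $\h_2^\ell$, is a copy of $\mathfrak{su}(3)$, yielding case~(2). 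Finally, the stabilizer in $G_2$ of any $V\in\M_S$ is isomorphic to $\SO(4)$ (Subsection~\ref{sub_M_S}); since $\g_2\subset\mathfrak{so}(7)$, every derivation preserving $V$ automatically preserves $V^\bot$, so the infinitesimal stabilizer coincides with $\h_4^V$, giving case~(3). By Proposition~\ref{Leo}, each of (1)--(3) is automatically a maximal Lie triple subsystem, and the displayed dimensions $3$, $8$, $6$ match.

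\emph{Case (B).} Since $G_2$ has no outer automorphisms, the Kac classification \cite[Chapter~8]{Kac} provides, up to conjugation, a unique nontrivial involution of $\g_2$, namely the Cartan involution of the symmetric pair attached to $G_2/\SO(4)$. Its fixed part is exactly the $\mathfrak{so}(4)$ of case~(3), i.e.\ $\h_4^V$, and its $(-1)$-eigenspace is the $8$-dimensional complement in $\g_2$, which is characterized by the interchange property $d(V)\subset V^\bot$, $d(V^\bot)\subset V$; that is, it is $\m_4^V$. Lemma~\ref{lema2} then gives case~(4), with the dimension count $\dim\m_4^V=14-6=8$ consistent with the statement.

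The main obstacle is importing the two classification inputs -- maximal subalgebras and involutions of $\g_2$ -- from the literature. Once these are in hand, matching them with the four explicit descriptions is routine, using the transitive actions of $G_2$ on the unit sphere $S^6\subset\R^7$ and on $\M_S$ to pass between abstract conjugacy classes of subalgebras and the concrete stabilizer realizations $\h_2^\ell$ and $\h_4^V$; note too that cases (1)--(3) are subalgebras whereas (4) is not, so there is no overlap between the two branches of Proposition~\ref{prop_todascasoalg}.
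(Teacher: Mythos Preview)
Your proposal is correct and follows essentially the same route as the paper: apply Proposition~\ref{prop_todascasoalg} to split into the subalgebra branch and the grading branch, invoke Dynkin for the former and the uniqueness of the involution (Kac) for the latter, and match each conjugacy class with the explicit stabilizer description. The paper differs only in minor details of presentation---it gives the explicit Hermitian form on $\ell^\bot$ to identify $\h_2^\ell\cong\mathfrak{su}(3)$ rather than citing transitivity on $S^6$, and it states Dynkin's result for the complexification $\g_2^\C$ before descending to the compact form---but your explicit invocation of Proposition~\ref{Leo} to pass from ``maximal subalgebra'' to ``maximal LTS'' is a point the paper leaves implicit.
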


\begin{proof} 
According to the  renowned  paper \cite{Dynkin52} by E.~Dynkin in 1952, there are up to conjugation three maximal subalgebras   of the complex Lie algebra $\g_2^\C$: two of rank 2 (isomorphic to $\sl_3(\C)$ and $\mathfrak{so}_4(\C)$, see Tables 5 and 6 in \cite{Dynkin52}) and one principal  TDS (Table~16 in the same paper).
Of course, any principal TDS   jointly with   $\h_2^\ell$ and $\h_4^V$ are just representatives of the three conjugacy classes of  the  maximal subalgebras of $\g_2$ (of dimensions 3, 8 and 6 respectively). 
In fact, the maximality of any principal subalgebra has just been recalled in Section \ref{se_ppaldefs}. Besides, if $\ell=e_1$ (no loss of generality), the restriction map gives the isomorphism $\h_2^\ell\cong \mathfrak{su}(\ell^\bot, \sigma)\cong \mathfrak{su}(3)$ between simple Lie algebras, for $\C_\ell:=\R\oplus\R\ell$ real algebra isomorphic to the complex numbers,
$\ell^\bot=\C_\ell e_2\oplus\C_\ell e_3\oplus\C_\ell e_5$ a vector space over $\C_\ell$ of dimension $3$, and $\sigma\colon \ell^\bot\t\ell^\bot\to \C_\ell$ the Hermitian form given by $\sigma(x,y)= \esc{x,  y}1-\esc{\ell x,y}\ell$ (extracted from \cite[\S3.5]{procAlb}).
Also, if $V\in\M_S$, the restriction map gives the isomorphism  $\h_4^V\cong\mathfrak{so}(V^\bot,\esc{\cdot,\cdot })\cong\mathfrak{so}(4)$ between semisimple Lie algebras. 
The maximality of $\h_4^V$ can be concluded (alternatively to a direct comparison with \cite{Dynkin52}) from Lemma~\ref{le_maximalehr} and the fact that in the three cases (a  principal TDS, $\h_2^\ell$ and $\h_4^V$), 
there exists an $\h$-irreducible complement $\m$  (over $\R$, not absolutely irreducible in two of the cases)
such that $\g_2=\h\oplus\m$, taking into account the reductive decompositions analyzed in \cite[\S2]{LY}.  

On the other hand, for any $V\in\M_S$, the $\Z_2$-grading $\R^7=V\oplus V^\bot$ induces a $\Z_2$-grading on $\g_2=\der(\R^7,\t)$ with just $\h_4^V$ and $\m_4^V$ the even and the odd part, respectively. 
So, $\m_4^V$ is a maximal Lie triple subsystem by Lemma~\ref{lema2}.
Finally, the above LTS exhaust all the maximal Lie triple subsystems of $\g_2$ by Proposition~\ref{prop_todascasoalg}, because 
  there is only one order two automorphism of $\g_2$ up to conjugation, so that any $\Z_2$-grading on $\g_2$ is as above for a convenient associative subalgebra $V$ (the fixed part of the  automorphism producing the grading).
\end{proof}

  Note that the maximal subalgebras appeared above are more than just a technical tool, 
 but they are in fact crucial for the geometric parts following later.  Simply recall from Section~\ref{se_curvatura}
 that as (maximal) Lie triple systems they are the tangent spaces to the  (maximal) totally geodesic submanifolds of $G_2$.
 In fact, some of the Lie subalgebras shown here are easily recognizable. If $G$ is the Lie group of automorphisms of an algebra $A$, the different stabilizers $G_a$ (with $a\in A$) provide derivation algebras $\hbox{Lie}(G_a)$. This is nothing but the subalgebra of all derivations vanishing on $a$. 
 Also given a subspace $S$ of $A$, the Lie algebra of the subgroup of automorphisms preserving $S$ is just the derivations $d$ of $A$ such  that $d(S)\subseteq S$.


\subsection{Maximal totally geodesic submanifolds of $G_2$}

As mentioned at the end of  Section~\ref{se_curvatura},
  the tangent space determines the totally geodesic submanifold. 
  Recall also that, if $\N$ is   totally geodesic, so are $L_g(\N)=\{gh:h\in\N\}$ and $R_g(\N)=\{hg:h\in\N\}$ for any $g\in G_2$.
  (Again $L_g,R_g\colon G_2\to G_2$ are the multiplication operators.)
  Besides, for any $\N$ there is some $g\in G_2$ (lots of such elements, in fact) such that  the neutral element in the group, $I=\id_{\R^7}$, belongs to $ L_g(\N)$. 
  Thus,
\begin{proposition}\label{pr_tgm_casoG2} 
    If $\N$ is a maximal totally geodesic submanifold of $G_2=\mathrm{Aut}(\R^7,\t)$, 
     then there is $g\in G_2$ such that $L_{g} (\N)$ equals
     either
    \begin{enumerate}
     \item $H^\h=\{F\in G_2:\hbox{Ad}(F)(\h)\subset\h\}$ for some principal subalgebra $\h$; or
        \item $H^\ell:=\{F\in G_2:F(\ell)=\ell\}$ for some $0\ne\ell\in\R^7$; or
        \item $H^V:=\{F\in G_2:F(V^\bot)\subset V^\bot\}$ for some $V\in\M_S$; or
       
        \item   $\widetilde\M_S:=\{\theta\in G_2\setminus\{I\}:\theta^2=\id_{\R^7}\}$.
    \end{enumerate}
    These manifolds (and their  translated manifolds by left and right multiplication operators) are respectively diffeomorphic to: $H^\h\cong \SO(3)$, 
    $H^\ell\cong \SU(3)$, $H^V\cong \SO(4)$ and $\widetilde\M_S\cong G_2/\SO(4)$.
\end{proposition}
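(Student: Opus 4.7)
Plan: Since left multiplication on the compact Lie group $G_2$ (with its bi-invariant metric) is an isometry, for any $g_0\in \N$ the translate $L_{g_0^{-1}}\N$ is a maximal totally geodesic submanifold through $I$. By the correspondence recalled in Section~\ref{se_curvatura}, the tangent space $\mathfrak{s}:=T_I(L_{g_0^{-1}}\N)$ is a maximal Lie triple subsystem of $\g_2$ and the translated submanifold is $\exp(\mathfrak{s})$. Proposition~\ref{pr_LTS_G2} then forces $\mathfrak{s}$ to lie in one of the four families $\h$, $\h_2^\ell$, $\h_4^V$, $\m_4^V$. I treat the three subalgebra cases together and the odd-part case separately.

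For the subalgebra cases (1)--(3), each of $\h$, $\h_2^\ell$, $\h_4^V$ is the Lie algebra of the corresponding stabilizer subgroup: this holds by construction for $H^\ell$ and $H^V$, while for $H^\h$ it follows from the fact that a maximal simple subalgebra of a simple Lie algebra is self-normalizing. Thus $\exp(\mathfrak{s})$ is the identity component of $H^\h$, $H^\ell$, or $H^V$. The descriptions $H^\ell\cong\SU(3)$ and $H^V\cong\SO(4)$ are explicit in Sections~\ref{se_ppaldefs} and~\ref{sub_M_S}; for $H^\h\cong\SO(3)$ I use that the principal $\h\cong\mathfrak{so}(3)$ acts on $\R^7$ through its unique (absolutely) irreducible $7$-dimensional representation, which is of integer spin and hence factors through $\SO(3)$ rather than $\mathrm{Spin}(3)$, together with the vanishing of the centralizer $Z_{\g_2}(\h)$ (guaranteed by the irreducible decomposition of $\g_2$ as an $\h$-module) and of $\mathrm{Out}(\mathfrak{so}(3))$, which jointly yield that $H^\h\cong\SO(3)$ is connected.

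The delicate case is (4), where $\m_4^V$ is not a subalgebra. The grading $\g_2=\h_4^V\oplus\m_4^V$ is realized as conjugation by the element $s_0\in G_2$ acting as $+I$ on $V$ and $-I$ on $V^\bot$; that $s_0\in G_2$ is the content of the grading relations \eqref{eq_Z2grad}, and obviously $s_0^2=I$. Since $\mathrm{Ad}(s_0)$ acts as $-I$ on $\m_4^V$, for any $X\in\m_4^V$ one obtains
\[
(\exp(X)s_0)^2=\exp(X)\bigl(s_0\exp(X)s_0^{-1}\bigr)s_0^2=\exp(X)\exp(-X)=I,
\]
so $\exp(\m_4^V)s_0$ consists of involutions. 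This set is connected of dimension $8$, and the $G_2$-conjugacy class of $s_0$ is also connected of dimension $\dim G_2-\dim H^V=8$ and exhausts the non-trivial involutions (a single conjugacy class, since up to conjugation $\g_2$ carries only one $\Z_2$-grading, as recorded in the proof of Proposition~\ref{pr_LTS_G2}). Hence $\exp(\m_4^V)s_0=\widetilde\M_S$. Using $s_0\exp(X)=\exp(-X)s_0$, this rewrites as $L_{s_0}\exp(\m_4^V)=\widetilde\M_S$, so that $g:=s_0 g_0^{-1}$ gives $L_g\N=\widetilde\M_S$. The diffeomorphism $\widetilde\M_S\cong G_2/\SO(4)$ is then the standard presentation of a conjugacy class, with $C_{G_2}(s_0)=H^V\cong\SO(4)$.

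The main obstacle is case (4): promoting the infinitesimal odd-part $\m_4^V$ to a global submanifold of involutions. The key step is the production of the reflective involution $s_0\in G_2$ (automatic from the grading once the cross-product compatibility is checked) and the identification, via dimension and connectedness, of $\exp(\m_4^V)s_0$ with the conjugacy class of $s_0$.
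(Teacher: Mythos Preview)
Your reduction to Proposition~\ref{pr_LTS_G2} and your treatment of the three subalgebra cases match the paper's. The two places where you diverge are genuinely different and worth comparing.

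For $H^\h\cong\SO(3)$, the paper does an explicit coordinate computation (Remark~\ref{re_SOnoesfera}) to show that the only $F\in G_2$ commuting with the concrete basis $\{h_1,h_2,h_3\}$ is the identity. Your route is representation-theoretic: the principal $\h$ acts on $\R^7$ as the irreducible spin-$3$ representation, which factors through $\SO(3)$, pinning down $(H^\h)_0$; then you use $\mathrm{Out}(\mathfrak{so}(3))=1$ to get $H^\h=(H^\h)_0\cdot C_{G_2}(\h)$. One small gap: you invoke $Z_{\g_2}(\h)=0$, but that only makes $C_{G_2}(\h)$ finite. To conclude it is trivial you should say: $\R^7$ is absolutely $\h$-irreducible, so by Schur any $F\in C_{G_2}(\h)\subset\GL(\R^7)$ is scalar, and $-I\notin G_2$ since $(-x)\times(-y)=x\times y\neq -(x\times y)$. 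Your argument is cleaner and transfers to other settings; the paper's has the virtue of being fully explicit and free of representation-theoretic input.

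For case~(4), the paper takes the manifold $\widetilde\M_S$ as given, differentiates $\theta(t)^2=I$ to obtain $d\theta_0+\theta_0 d=0$, and reads off that $T_{\theta_0}\widetilde\M_S$ consists of odd derivations. You instead start from $\m_4^V$, produce the grading involution $s_0=\theta_V$, and show $\exp(\m_4^V)s_0$ lands in involutions. The identification with $\widetilde\M_S$ by ``dimension $8$ and connectedness'' is the one soft spot: same dimension plus connected containment gives openness, but you still need closedness. The clean fix is to use the polar decomposition $G_2=\exp(\m_4^V)\cdot H^V$ (surjectivity of $X\mapsto\exp(X)H^V$ on the compact $G_2/H^V$); then every $gs_0g^{-1}$ with $g=\exp(X)k$ equals $\exp(2X)s_0$, so the conjugacy class is exactly $\exp(\m_4^V)s_0$. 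With that line added, your approach is correct and arguably more conceptual, since it exhibits directly why the Cartan embedding $gH^V\mapsto gs_0g^{-1}$ realises $\exp(\m_4^V)$ as the involution class.
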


\begin{proof}
All we have to do is to check that the related tangent spaces are those in Proposition~\ref{pr_LTS_G2}. 
Recall that $d\in T_IH$, for $H$ a subgroup of $G_2$, if $\exp(  td)\in H$ for all $t\in\R$.
First, $d\in T_IH^\h$ if $\hbox{Ad}(\exp(td))(\h)\subset\h$, that is, the vector subspace $\h$ is invariant for $\hbox{Ad}(\exp(td))=\exp(t\,\hbox{ad}d)$. We derive at $t=0$ to get $\hbox{ad}d(\h)\subset \h$. But any principal subalgebra is   selfnormalizing (proved, for instance, in \cite[proof of Proposition~3]{procAlb}), so that $[d,\h]\subset\h$ implies $d\in\h$.
Second,   $d\in T_IH^\ell$ if $\ell=\exp(td)(\ell)=\sum_{n=0}^\infty \frac{t^n}{n!}d^n(\ell)$. Taking $\frac{d}{dt}\vert_{t=0}$ we get $d(\ell)=0$. Also the third case is clear, since $\exp(td)$ leaving invariant the vector subspace $V^\bot$ for all $t$ implies that the same is true for $d$.

As regards the fourth set, there is a natural bijective correspondence   $\M_S\to\widetilde\M_S\subset G_2$, which assigns, to each $V\in\M_S$, the order 2 automorphism $\theta_V$ given by
\begin{equation}\label{eq_order2auto}
  \theta_V\vert_{V }:=\mathrm{id},\quad  \theta_V\vert_{V^\bot}:=-\mathrm{id}.
\end{equation} 
This is well defined, since recall that $\R^7=V \oplus V^\bot$ is a $\Z_2$-grading. Conversely, given $\theta\in\widetilde\M_S$, then $\hbox{Fix}(\theta)$ is an associative subalgebra. 
This identification is compatible with the $G_2$-action in $ \widetilde\M_S$ given by $(F,\theta)\mapsto \hbox{Ad}(F)(\theta)$.
To compute the tangent space to this manifold at a point $\theta_0\in\widetilde\M_S$, take a curve $\theta(t)\in \widetilde\M_S$ with $\theta(0)=\theta_0$ and compute $d=\theta'(0)$. 
Deriving $\theta(t)^2 =\mathrm{id}$ and evaluating at $0$ we get $d\theta_0+\theta_0d=0$. But this is equivalent to the fact  that $d$ interchanges $\hbox{Fix}(\theta_0)$
and $\hbox{Antifix}(\theta_0)$, that is, $d$ is an odd derivation. 

Finally, we have the group isomorphisms:
  $H^\ell\cong \SU(\ell^\bot,\s)\cong\SU(3)$ ($F\mapsto F\vert_{\ell^\bot}$) for $\s$ the Hermitian product considered in the proof of Proposition~\ref{pr_LTS_G2}; 
   $H^V\cong \SO(V^\bot,\esc{\cdot,\cdot })\cong\SO(4)$ ($F\mapsto F\vert_{V^\bot}$) with $\esc{\cdot,\cdot }$ as usual the scalar product in $\R^7$;
 and   $H^\h\cong \SO(\h,\kappa)$  ($F\mapsto \hbox{Ad}(F)\vert_{\h}$) for $\kappa$ the Killing form of $\g_2$.
The computations to check that the last map is really an isomorphism are developed in Remark~\ref{re_SOnoesfera}.
(It should be well known but there is some confusion on the existent literature and several authors speak about $\mathbb S^3$   instead of $\SO(3)$.)
\end{proof}

Note that  
$H^V$ coincides with $\{F\in G_2:F\theta_V=\theta_V F\}$, the centralizer of the order 2 automorphism $\theta_V$, and note too that every order two automorphism is $\theta_V$ for some $V\in\M_S$ (the eigenspace of the eigenvalue $1$ of the automorphism). It is also true that $H^V= \{F\in G_2:F\pi_V=\pi_V F\} $, for $\pi_V$ the orthogonal projection on $V$ as in Section~\ref{sub_M_S}.

\begin{remark}{\rm
This description of $\widetilde \M_S$ fits with the usual way of seeing   a symmetric space $G/H$ inside $G$. On one hand, take the geodesic symmetry $s_e\colon G\to G$ given by $s_e(g)=g^{-1}$ and consider its fixed points set $\{g\in G:g^2=\id\}$. The connected components are totally geodesic, but $\{e\}$ is an isolated connected component. 
On the other hand, if $H=\hbox{Fix}(\s)$ for $\s$ an order 2 automorphism of $G$, then we can take the (well-defined) embedding $G/H\to G$, $gH\mapsto \s(g)g^{-1}$. In our case, we have fixed  an order two automorphism $\theta$ of $(\R^7,\t)$, so that $\s=\hbox{Ad} (\theta)\in\aut (G_2)$ also has order two and $H=\hbox{Fix}( \s)=\{g\in G:\theta g=g\theta\}=H^V$ for $V=\hbox{Fix}(\theta)$. Thus the copy of $G_2/H^V$ inside $G_2$  is $\{\hbox{Ad} (\theta)(g)g^{-1}:g\in G_2\}=\{\theta g\theta g^{-1}:g\in G_2\}=L_{\theta}(\widetilde\M_S)$, since any order two automorphism of $G_2$ is conjugated to $\theta$.
}\end{remark}

\begin{remark}\label{re_polar}{\rm
Let us check how this fits with the classical results of  \cite{ChenyNagano}: 
If $M$ is a Riemannian symmetric space of compact type and $p\in M$, 
the connected components other than $\{p\}$ of $ \hbox{Fix}(s_p)$
 are called {\em polars} or $M_+$. 
 In our case, $\widetilde \M_S$ is  clearly a polar of $G_2$, and the other ones are $R_p(\widetilde \M_S)$ for $p\in G_2$. In fact, $s_p(q)=pq^{-1}p$ fixes $q$ just in case $(qp^{-1})^2=\id_{\R^7}$, that is, if $qp^{-1}\in\widetilde \M_S$ (if $p\ne q$), so that $q=R_p(qp^{-1})\in R_p(\widetilde \M_S)$.
Also,  for every polar $M_+$ of $M$ and every $q\in M_+$,
there exists another reflective submanifold $M_-$ of $M$ passing through $q$ such that  $T_qM_-=(T_qM_+)^\perp$,
 called a {\em meridian} of $M $.
 In our setting, this general fact can be easily checked: for $q$ in the polar $ M_+=  R_p(\widetilde \M_S)$, take as $V=\hbox{Fix}(qp^{-1})\in \M_S$ and $M_-=R_p(H^V)$. Indeed, 
  $q\in M_-$ since $qp^{-1}\in H^V$,  and moreover
  $T_qM_+$ and $T_qM_-$ are orthogonal and complementary, because they are, respectively, the even and the odd part of the $\Z_2$-grading on $\g_2$ induced by the automorphism $qp^{-1}$.
  (More general aspects on the so-called $(M_+,M_-)$-method   by Chen and Nagano can be consulted in  \cite[\S11.2.5]{Chen}.)
  }\end{remark}

\begin{remark}\label{re_ppales}
{\rm
While the $\ell$ and the $V$ in Proposition~\ref{pr_tgm_casoG2} are clear, 
principal subalgebras are difficult to figure out.
However, if we could somehow pick a TDS at random, the most likely TDS chosen would be a principal TDS (hence the name).
So paradoxically, the most common TDS is also the hardest to describe. The reason of this could be psychological: when one thinks of a subalgebra of a derivation algebra, one tends to consider derivations annihilating something, or derivations preserving a suitable subspace ﻿﻿(so we cannot consider our choices to be  random).
For instance, fixed $V\in\M_S$, one thinks of $\mathfrak s_1=\{d\in\g_2:d(V)=0\}$, which  is a TDS (see subsequent Eq.~\eqref{eq_Sl}), even  the more far-fetched 
$\mathfrak s_2=\{d\in\g_2:d(V^\bot)\subset V^\bot,[d,\mathfrak s_1]=0\}$ is a  TDS too (see  Eq.~\eqref{eq_Sr}), but none of these are principal. 
What happens is that   $\R^7$ is irreducible for any principal subalgebra, $\h$   leaves no  invariant subspace!
In the complex case, this   handicap is avoided using roots:   a principal subalgebra of an arbitrary complex simple Lie algebra 
can be easily constructed with root vectors, with  
the semisimple element $h\in\h=\esc{h,e,f}$  
chosen by doing $\alpha(h)=2$ for all $\alpha$ in a set of simple roots of a root decomposition relative to a Cartan subalgebra. 
But we cannot apply this to our compact case, and the description in terms of roots of $\h^\C$ does not seem to be of much help.
So, let us look for a explicit description of one principal subalgebra (recall that all of them are conjugated). A completely concrete election of a suitable $h$ can be consulted in \cite[Lemma~2.7]{procAlb}   and the related $e$ and $f$ in \cite[Proposition~2.8]{procAlb}, both in terms of the maps $D_{x,y}$. These maps are defined as derivations of octonions as $D_{x,y}(z)=[ [x,y],z]+3(x,z,y)$\footnote{The braces here are used for the \emph{associator}, that is, $(x,y,z)=(xy)z-x(yz)$. The fact that $\mathbb O$ is not associative means that the associator is not identically zero.}, what is not difficult to translate in terms of   $\g_2=\der(\R^7,\t)$ (recall Remark~\ref{re_octonions}). Thus, the above mentioned principal subalgebra is spanned by
\begin{equation}\label{eq_unappal} 
\begin{array}{l}
h_1=\frac16\big(4D_{\ell,\iu\t\ell}+5D_{\ju,\ku}\big),\\ 
h_2=\sqrt{\frac32}D_{\iu,\iu\t\ell}+\frac13\sqrt{\frac52}\big(D_{\ell,\ju}+D_{ \iu\t\ell,\ku}\big),\\ 
h_3=-\sqrt{\frac32}D_{\iu,\ell}+\frac13\sqrt{\frac52}\big(D_{\ell,\ku}-D_{\iu\t\ell,\ju}\big),
\end{array}
\end{equation}
for $\{\iu,\ju,\ku=\iu\times\ju\}$ an orthonormal basis of some $V\in\M_S$ and $\ell$ a unit vector in $V^\bot$.  
Be careful with the notations in Eq.~\eqref{eq_unappal}: i) We have interchanged the roles of $\ju$ and $\ell$ in \cite[Lemma~2.7]{procAlb}  to adapt this algebra to Section~\ref{se_2dimLTS_v2}; ii) we have changed the names of  $e$ and $f$  in the above reference to avoid mistakenly imagining that they are nilpotent elements; iii) we have multiplied those $e$ and $f$ with $\sqrt{\frac32}$ to get a nice typical basis of $\h\cong\mathfrak{so}(3)$. More precisely, for $i=1,2,3$ (sum modulo 3), it is tedious but easy to check that
the elements in \eqref{eq_unappal} satisfy
\begin{equation}\label{eq_formulitabasica} 
[h_i,h_{i+1}]=h_{i+2}.
\end{equation}
The three elements in the basis are of course semisimple (diagonalizable after extending scalars to $\C$). We can  even be more precise: $h_1\o 2\iu\in\g_2^\C=\g_2\o_\R\C$ acts on $\R^7\o\C$ (we do not write now $\C^7$ for avoiding extra confusion with $\iu\in\C$) with eigenvalues $\{0,\pm2,\pm4,\pm6\}$ and eigenvectors $\{\iu\o1,\ell\o1\pm(\iu\t\ell)\o\iu,\ju\o1\pm\ku \o\iu,(\ju\t\ell)\o1\pm(\ku\t\ell)\o\iu\},$ respectively. In particular it is clear that $h_1$ acts absolutely irreducibly on $\R^7$ (moreover, the three $h_i$'s do). Note   that this implies that $\hbox{ad} h_1\colon\g_2\to\g_2$ is semisimple too, with $\h$ and $\h^\perp$ the only $\h$-invariant subspaces of $\g_2$, and moreover $\h^\perp$ is absolutely $\h$-irreducible ($h_1\o 2\iu$ acting with integer eigenvalues $\{10-2n:n=0,\dots,10\}$).
  }\end{remark}

\begin{remark}\label{re_SOnoesfera}
{\rm 
Note that there remains some confusion with the manifold $H^{\mathfrak h}$ in Proposition~\ref{pr_tgm_casoG2}. In some references, it is said to be $\mathbb S^3$, as in  \cite{Klein09} (see Table~\ref{tab:Klein}). This seems 
  to be a minor slip  because the same author in the more complete reference \cite{Klein} mentions  $\R P^3$ as one of the maximal totally geodesic submanifolds of $G_2/\SO(4)$.   
Thus we believe it is important to complete the  details in the proof of Proposition~\ref{pr_tgm_casoG2}
to elucidate once and for all which of the two possibilities it is. 
That is, 
we want to prove that the map $H^\h\to\SO(\h,\kappa)$ given by $F\mapsto\hbox{Ad}(F)\vert_\h$ is indeed a group isomorphism. To do so, it is sufficient to prove 
  that, if $F\in G_2=\aut(\R^7,\t)$ commutes with $\{h_i:i=1,2,3\}$ in Eq.~\eqref{eq_unappal}, then necessarily $F=I $ (the identity of $\R^7$). The fact that $F$ commutes with $h_1$ implies that $F$ preserves $\ker(h_1)=\esc{\iu}$, $\ker(h_1-I)^2=\esc{\ell,\iu\t\ell}$,
  $\ker(h_1-2I)^2= \esc{\ju,\ku}$ and $\ker(h_1-3I)^2=\esc{\ju\t\ell,\ku\t\ell }$. As $F$ is an isometry, then $F(\iu)=\pm\iu$. Note that $F$ is determined by $F(\iu)$, $F(\ju)=a\ju+b\ku$ with $a^2+b^2=1$, and $F(\ell)=(c+d\iu)\t\ell$ with $c^2+d^2=1$. 
  There is no possibility for $(a,b,c,d)$  with $F(\iu)=-\iu$: in this case $F(\ku)=b\ju-a\ku$, and similarly we compute $F(\iu\t\ell)$, $F(\ju\t\ell)$ and $F(\ku\t\ell)$. Now $F$ commuting with $h_2$ leads to $(a,b,c,d)=(1,0,-1,0)$, while $F$ commuting with $h_3$ leads to $(a,b,c,d)=(1,0,1,0)$, and both things cannot occur simultaneously.  
  On the other hand, proceeding in the same way to   find $(a,b,c,d)$ in case $F(\iu)=\iu$, we can conclude that   the only solution is that one with $(a,b,c,d)=(1,0,1,0)$, corresponding to the identity map.
}\end{remark}


\section{Maximal totally geodesic submanifolds of $G_2/\SO(4)$}  \label{se_stg_MS}

We will study the 8-dimensional symmetric space $G_2/\SO(4)$ from two different viewpoints. 
One of them, in the subsequent Section~\ref{se_stg_MS_free}, or the octonionic-free version, uses the description in Eq.~\eqref{emese} as a submanifold of a Grassmannian, and in this way the description of the totally geodesic submanifolds becomes very natural. 
Furthermore, anyone without knowledge on cross products or octonions can follow the arguments, because the related Lie triple systems
 are completely described in matricial terms (including the triple products).
The disadvantage  is that it is  difficult to prove directly in this context that the found   maximal LTS cover all the possibilities.
The other point of view (the one that we will develop in this section)   uses the description of $G_2$ as the automorphism group of  $(\R^7,\t)$ (a disadvantage or not depending on one's background), and this permits us to apply some general lemmas about maximality to achieve the desired classification in Theorem~\ref{pr_LTS_MSv1}. The main tool is the fact that if $T$ is the Lie triple system related to $\M_S$, 
then its standard envelope $\g(T)$ is the Lie algebra of type $\g_2$. 
Roughly speaking, we can extract more information by thinking of the symmetric space inside $G_2$
rather than inside the Grassmannian $\grass\nolimits_3(\R^7)$ because   in the latter case there is more \emph{clearance}.


\subsection{Lie triple system related to the symmetric space $\M_S$  and its envelope} 

Recall that $G_2$ acts on $\M_S=\{V\le\R^7:\dim V=3,\,V\t V\subset V\}$ and the isotropy group of $V$
 is $H^V\cong\SO(V^\bot,\esc{\cdot,\cdot })$ defined in Proposition~\ref{pr_tgm_casoG2}. Thus $\M_S\cong G_2/\SO(4)$. If $\theta_V$ is the order two automorphism of $\R^7$ given by
\eqref{eq_order2auto}, then $\hbox{Ad}(\theta_V)\colon \g_2=\der(\R^7,\t)\to\g_2$ is an order two automorphism and $\hbox{Fix}(\hbox{Ad}(\theta_V))=\{d\in\g_2:d\theta_V=\theta_Vd\}=\h_4^V\cong \mathfrak{so}(V^\bot,\esc{\cdot,\cdot })$, with the notations  in Proposition~\ref{pr_LTS_G2}, is the even part of the induced $\mathbb{Z}_2$-grading. The odd part of the grading   can be identified with the tangent space at the point $V$,
$$
T_V\M_S=\{d\in\g_2:d(V)\subset V^\bot, d(V^\bot)\subset V\}=\m_4^V.
$$ 
Thus $\g_2$ is just the standard envelope algebra of the Lie triple system $T_V\M_S$.

\subsection{Adapted principal subalgebras}\label{se_adapted}
The following concept plays a key role  to describe the 2-dimensional maximal Lie triple subsystems,  the tangent spaces to the maximal totally geodesic submanifolds that are \lq placed in a skew position\rq\, according to \cite{Klein}.

\begin{definition}
 If $\h$ is a principal subalgebra of $\g_2$,    and $V\in \M_S$, we say that $\h$ is \emph{adapted to $V$} if $\h$ is homogeneous relative to the $\Z_2$-grading on $\g_2$ produced  by $V$, that is, any $d\in\h$ can be written as $d=d_\0+d_\1$, with $d_i\in\h $  
 and $ d_i(V_j) \subset V_{i+j}$ for any $i,j=\0,\1$, sum modulo 2. (We denote $V_\0=V$ and $V_\1=V^\bot$.) 
\end{definition}

Fixed a principal subalgebra $\h$, we can always find $V\in\M_S$   to which $\h$ is adapted, and similarly,
fixed $V\in\M_S$, we can always find a principal subalgebra adapted to $V$.  
Indeed, as all principal subalgebras are conjugated by $G_2$, and the same happens to any associative subalgebra of $(\R^7,\t)$,
then, 
in order to check the assertion it is enough to find one principal subalgebra adapted to some $V$. 
We can observe  that   
 the principal subalgebra considered  in Eq.~\eqref{eq_unappal}  (extracted with some minor changes from \cite[Lemma 2.7,\,Proposition 2.8]{procAlb})
 is precisely adapted to   $V=\esc{\{\iu,\ju,\ku\}}$, with $h_1$ an even derivation, and $h_2$ and $h_3$ two odd derivations.
Notice the following   characterization, important for us when is applied to principal subalgebras.

\begin{lemma}\label{le_laparteimpardim2}
If $\h$ is any principal subalgebra, $\h$ is  adapted to $V\in \M_S$ if and only if $\dim(\h\cap\m_4^V)=2$.
\end{lemma}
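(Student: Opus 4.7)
The plan is to analyze the direct sum $\h=(\h\cap\h_4^V)\oplus(\h\cap\m_4^V)$, which appears whenever $\h$ is homogeneous for the $\Z_2$-grading $\g_2=\h_4^V\oplus\m_4^V$ induced by $V$ (since $\h_4^V\cap\m_4^V=0$). Because $\dim\h=3$, the value $\dim(\h\cap\m_4^V)$ is a priori $0$, $1$, $2$, or $3$; the forward direction rules out $0$, $1$, and $3$, while the backward direction shows that the value $2$ forces adaptedness. A recurring tool will be the following facts about $\h\cong\mathfrak{so}(3)\cong(\R^3,\times)$: for any two linearly independent $x,y$, the bracket $x\times y$ is nonzero and perpendicular to $\langle x,y\rangle$, so $\mathfrak{so}(3)$ has no $2$-dimensional subalgebra and no $2$-dimensional abelian subspace.

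For the forward direction, assume $\h$ is adapted. Then $\h\cap\h_4^V$ is a subalgebra of $\h$, so by the observation above $\dim(\h\cap\h_4^V)\in\{0,1,3\}$. The value $3$ puts $\h\subset\h_4^V$, so $V$ becomes an $\h$-invariant subspace of $\R^7$, contradicting the absolute irreducibility of $\R^7$ as $\h$-module recalled in Remark~\ref{re_ppales}. The value $0$ puts $\h\subset\m_4^V$, so $[\h,\h]\subset\h_4^V$ by the grading and $[\h,\h]\subset\h\subset\m_4^V$, giving $[\h,\h]\subset\h_4^V\cap\m_4^V=0$ and making $\h$ abelian, contradicting $\h\cong\mathfrak{so}(3)$. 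Hence $\dim(\h\cap\h_4^V)=1$ and $\dim(\h\cap\m_4^V)=2$.

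For the backward direction, set $W:=\h\cap\m_4^V$ of dimension $2$. The grading gives $[W,W]\subset\h_4^V$, and $W\subset\h$ gives $[W,W]\subset\h$; hence $[W,W]\subset\h\cap\h_4^V$. Since no $2$-dimensional subspace of $\mathfrak{so}(3)$ is abelian, $[W,W]\ne0$ and therefore $\dim(\h\cap\h_4^V)\ge1$. As $(\h\cap\h_4^V)+(\h\cap\m_4^V)$ is a direct sum inside $\h$ of total dimension at most $3=\dim\h$, this forces $\h=(\h\cap\h_4^V)\oplus(\h\cap\m_4^V)$, i.e.\ $\h$ is adapted to $V$.

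The main obstacle I expect is the exclusion of the case $\h\subset\h_4^V$ in the forward direction: this is the only step that genuinely uses the principality of $\h$ (as opposed to it being just a TDS), via the absolute irreducibility of $\R^7$ as $\h$-module from Remark~\ref{re_ppales}. The remaining arguments all reduce to the elementary $\mathfrak{so}(3)$ facts noted at the start.
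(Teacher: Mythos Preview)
Your proof is correct and follows essentially the same approach as the paper's: both directions reduce to a dimension analysis of the homogeneous pieces $\h\cap\h_4^V$ and $\h\cap\m_4^V$ using elementary facts about $\mathfrak{so}(3)$ together with the irreducibility of $\R^7$ as $\h$-module. The only minor difference is that where the paper rules out $\dim(\h\cap\m_4^V)=1$ by showing that $\h\cap\m_4^V$ would then be a one-dimensional ideal of the simple algebra $\h$, you instead observe directly that $\h\cap\h_4^V$ is a subalgebra of $\h\cong\mathfrak{so}(3)$ and hence cannot have dimension~$2$; this is a slightly slicker way to handle that case but does not change the overall structure of the argument.
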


\begin{proof}
First, assume $\dim(\h\cap\m_4^V)=2$ and let us prove that $\h=(\h\cap\h_4^V)\oplus (\h\cap\m_4^V)$. Otherwise, by dimension count,  
$\h\cap\h_4^V=0$ so that $\h\cap\m_4^V$ would be an abelian two-dimensional subalgebra of $\h$. 
This is a contradiction since the maximal abelian subalgebras of a TDS have dimension 1 (the rank of $\h$). Conversely, assume that $\h=(\h\cap\h_4^V)\oplus (\h\cap\m_4^V)$. Denote by $s=\dim(\h\cap\m_4^V)$. 
If $s=0$, then $\h\subset \h_4^V$ and $\h$ would preserve $V$, a contradiction since $\R^7$ is an irreducible $\h$-module (recall Remark~\ref{re_ppales}).
If $s=3$, then $\h\subset\m_4^V$, so that $\h=[\h ,\h ]\subset \h\cap\h_4^V=0$, again a contradiction. 
Finally, if $s=1$, then $[\h\cap\m_4^V,\h\cap\m_4^V]=0$ and $[\h\cap\m_4^V,\h\cap\h_4^V]\subset \h\cap\m_4^V$ due to the grading, so that
$\h\cap\m_4^V$ would be a one-dimensional ideal of the simple algebra $\h$, a contradiction. In conclusion, the only possibility is $s=2$.
\end{proof}

To characterize when an associative subalgebra is adapted to a fixed principal TDS, the next result is in order. 

\begin{lemma}\label{le_utiladaptada}
    Given $V\in\M_S$ and a principal subalgebra $\h$  of $\g_2=\der(\R^7,\t)$, then $\h$ is adapted to $V$ if and only if $\theta_V\in H^\h$,
    for $\theta_V\in G_2$ the order two automorphism considered in Equation~\eqref{eq_order2auto}.
\end{lemma}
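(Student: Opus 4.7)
The plan is to exploit that $\theta_V$ has order two, so $\mathrm{Ad}(\theta_V)$ is an involution of $\g_2$ whose fixed and antifixed subspaces are exactly the homogeneous components $\h_4^V$ and $\m_4^V$ of the $\Z_2$-grading on $\g_2$ produced by $V$. Both implications of the lemma then reduce to standard facts about involution-invariant subspaces in characteristic zero, and the hypothesis that $\h$ is a principal subalgebra is not actually needed in the argument (any subspace closed under $\mathrm{Ad}(\theta_V)$ would suffice).

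For the forward direction, suppose $\h$ is adapted to $V$. By definition every $d\in\h$ can be written $d=d_\0+d_\1$ with $d_\0\in\h\cap\h_4^V$ and $d_\1\in\h\cap\m_4^V$. Then
\[
\mathrm{Ad}(\theta_V)(d)=d_\0-d_\1\in\h,
\]
so $\mathrm{Ad}(\theta_V)(\h)\subset\h$, which is precisely the condition $\theta_V\in H^\h$.

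For the backward direction, suppose $\theta_V\in H^\h$, so that $\mathrm{Ad}(\theta_V)(\h)\subset\h$. Since $\mathrm{Ad}(\theta_V)^2=\mathrm{id}$, the restriction $\mathrm{Ad}(\theta_V)\vert_\h$ is an involution of $\h$. Working over $\R$ (characteristic zero), $\h$ decomposes as the direct sum of the $+1$ and $-1$ eigenspaces of this involution; these eigenspaces are, respectively, $\h\cap\hbox{Fix}(\mathrm{Ad}(\theta_V))=\h\cap\h_4^V$ and $\h\cap\hbox{Antifix}(\mathrm{Ad}(\theta_V))=\h\cap\m_4^V$. Hence every $d\in\h$ splits as $d=d_\0+d_\1$ with homogeneous components $d_i\in\h$ satisfying $d_i(V_j)\subset V_{i+j}$, which is the definition of $\h$ being adapted to $V$.

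There is no real obstacle here; the only slightly subtle point is to observe that the decomposition of $\h$ into eigenspaces of $\mathrm{Ad}(\theta_V)$ is automatic from involutivity and lies inside the ambient grading $\g_2=\h_4^V\oplus\m_4^V$, so one indeed recovers the homogeneity condition in the definition rather than just $\mathrm{Ad}(\theta_V)$-invariance.
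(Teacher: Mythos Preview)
Your proof is correct and follows essentially the same approach as the paper: both arguments rest on the observation that $\mathrm{Ad}(\theta_V)$ is the grading involution on $\g_2=\h_4^V\oplus\m_4^V$, so $\h$ is a graded subspace if and only if it is $\mathrm{Ad}(\theta_V)$-invariant. The paper phrases this via the projection formula $p_V(d)=\tfrac12(d-\theta_V d\theta_V)$, while you phrase it via the eigenspace decomposition of the involution restricted to $\h$; these are two sides of the same coin, and your remark that the principal hypothesis on $\h$ is not used is also accurate.
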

\begin{proof}
Let us denote by $p_V\colon \g_2=\h_4^V\oplus\m_4^V\to\m_4^V$ the projection  on the odd part of the $\Z_2$-grading.
In these terms,
$\h$ is adapted to $V$ if and only if $p_V(\h)\subset \h$, because if the projection on the odd part continues in $\h$, this is also true for the projection on the even part. Now observe the following fact: if $d\in\g_2$, 
and we write $\theta=\theta_V\in G_2$,  then $d$ is even (respectively, odd) if and only if 
$\theta d \theta=d$ (respectively, $\theta d \theta=-d$). Hence the projection $p_V$ is given by $p_V(d)=\frac{d-\theta d\theta}{2}$. Thus, if $d\in\h$, the condition $p_V(d)\in\h$ is equivalent to $\mathrm{Ad}( \theta)(d)=\theta d \theta\in\h$ ($\theta$ has order two), that is, equivalent to $ \theta\in H^\h$.
\end{proof}

This allows us to relate the different associative subalgebras adapted to a fixed principal subalgebra.

\begin{proposition}\label{pr_lasadaptadas}
   Let $\h$ be a principal subalgebra adapted to $V_0\in\M_S$. For any $V\in\M_S$, $\h$ is adapted to $V$ if and only if there is $F\in H^\h$ with $F(V_0)=V$. 
    \end{proposition}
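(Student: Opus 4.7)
The plan is to reduce everything to the criterion provided by Lemma~\ref{le_utiladaptada}: $\h$ is adapted to $W\in\M_S$ if and only if $\theta_W\in H^\h$. Together with the easily verified identity
\[
F\,\theta_W\,F^{-1}=\theta_{F(W)}\qquad\text{for every }F\in G_2,\ W\in\M_S,
\]
this turns the geometric statement into a question about conjugation of order-two elements inside $H^\h$. The identity is immediate because $F(W)$ and $F(W^\bot)=F(W)^\bot$ are precisely the $\pm1$-eigenspaces of $F\theta_W F^{-1}$.

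For the ``if'' direction, I would start with $F\in H^\h$ satisfying $F(V_0)=V$. Then $\theta_V=F\theta_{V_0}F^{-1}$. Since $\h$ is adapted to $V_0$, Lemma~\ref{le_utiladaptada} gives $\theta_{V_0}\in H^\h$; as $H^\h$ is a subgroup of $G_2$ and $F\in H^\h$, it follows that $\theta_V\in H^\h$. Applying Lemma~\ref{le_utiladaptada} in the reverse direction yields that $\h$ is adapted to $V$.

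For the ``only if'' direction, assume $\h$ is adapted to $V$, so $\theta_V\in H^\h$. The proof of Proposition~\ref{pr_tgm_casoG2} (cf.\ Remark~\ref{re_SOnoesfera}) identifies $H^\h$ with $\SO(3)$ via $F\mapsto\mathrm{Ad}(F)\vert_\h$. Both $\theta_{V_0}$ and $\theta_V$ are elements of $H^\h\cong\SO(3)$ of order two. In $\SO(3)$ every order-two element is a rotation by $\pi$, and all such rotations form a single conjugacy class; hence there exists $F\in H^\h$ with $F\theta_{V_0}F^{-1}=\theta_V$. By the identity above this reads $\theta_{F(V_0)}=\theta_V$, and since an involution $\theta_W\in\widetilde{\M}_S$ determines $W=\mathrm{Fix}(\theta_W)$ uniquely, we conclude $F(V_0)=V$.

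The main obstacle is merely verifying that the identification $H^\h\cong\SO(3)$ (already established in the paper) is used correctly so that transitivity of the conjugation action on order-two rotations can be invoked; the rest is a bookkeeping exercise relating the involutions $\theta_W$ to their fixed associative subalgebras.
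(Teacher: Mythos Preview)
Your proof is correct and follows essentially the same approach as the paper's own proof: both directions are reduced via Lemma~\ref{le_utiladaptada} and the identity $F\theta_W F^{-1}=\theta_{F(W)}$ to the fact that any two order-two elements of $H^\h\cong\SO(3)$ are conjugate, after which $V=\mathrm{Fix}(\theta_V)$ recovers the associative subalgebra. The only cosmetic difference is that the paper justifies conjugacy of order-two elements in $\SO(3)$ by appealing to the maximal torus, whereas you note directly that they are rotations by $\pi$; both arguments are valid.
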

    
\begin{proof}
First, assume that $V=F(V_0)$ for $F\in H^\h$. Simply observe that $\theta_{F(V_0)}=F\theta_{V_0}F^{-1}$. As Lemma~\ref{le_utiladaptada} says that 
$\theta_{V_0}\in H^\h$, but $F$ and $F^{-1}$ also belong to the group $H^\h$, thus $\theta_{F(V_0)}\in H^\h$ and $\h$ is adapted to $F(V_0)$.
Conversely, take $V\in\M_S$ such that $\h$ is adapted to $V$.
    Again by Lemma~\ref{le_utiladaptada}, $\theta_{V}$ and  $\theta_{V_0}$ both belong to $H^\h$. 
    But the group $H^\h $ is isomorphic to $\SO(3)$ (see Remark~\ref{re_SOnoesfera}), and in $\SO(3)$ all order two elements are  conjugated.  
    Indeed, any element in a compact connected Lie group is conjugated to an element in its maximal torus, and the maximal torus of $\SO(3)$ is $\SO(2)$ (\cite{Adams}). Hence there is $F\in H^\h$ such that 
    $F \theta_{V_0}F^{-1}=\theta_{V}$. Thus $\theta_{V}=\theta_{F(V_0)}$, which implies $V=F(V_0)$, 
    because   we can recover $V\le\R^7$ from the automorphism as $\hbox{Fix}(\theta_V)$.\end{proof}

 Soon (in Corollary~\ref{co_LTS_MSv1}) we will prove that, for  a principal TDS  $\h$ adapted to $V\in \M_S$, then $\h\cap\m_4^V$ is a maximal subtriple of $ \m_4^V$.

\subsection{Maximal Lie triple subsystems}

 Another general result about maximality of Lie triple systems  follows.

\begin{proposition}\label{prop_todascasosegundo}

 Let $\g=\g_{\bar0}\oplus\g_{\bar1}$ be a $\Z_2$-grading on a simple Lie algebra $\g$, and $\h$  a maximal
Lie subalgebra of $\g$ such 
that   $\h=(\h\cap \g_{\bar0})\oplus(\h\cap\g_{\bar1})$.
If $[\h\cap\g_{\bar1},\h\cap\g_{\bar1}]=\h\cap\g_{\bar0}$,   
then $\h\cap\g_{\bar1}$ is a maximal Lie subtriple of $\g_{\bar1}$.  

The required assumption on the bracket is automatically fulfilled  if $\h$ is simple and different from $\g_{\bar0}$.
  \end{proposition}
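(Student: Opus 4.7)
The plan is to show that any Lie triple subsystem $T$ of $\g_{\bar 1}$ strictly containing $\h\cap\g_{\bar 1}$ must equal $\g_{\bar 1}$. The idea is to enlarge $T$ to a genuine Lie subalgebra $\widetilde\h:=\h+T+[T,T]$ of $\g$ that strictly contains $\h$, invoke the maximality hypothesis on $\h$, and then read off the conclusion by comparing $\Z_2$-graded components.

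First I would verify that $\widetilde\h$ is a subalgebra. The interactions inside $T+[T,T]$ are handled by the triple-system axiom ($[[T,T],T]\subset T$) together with Jacobi ($[[T,T],[T,T]]\subset[T,T]$). The delicate part is $[\h,T+[T,T]]$: decomposing $\h=(\h\cap\g_{\bar 0})\oplus(\h\cap\g_{\bar 1})$, one finds $[\h\cap\g_{\bar 1},T]\subset[T,T]$ because $\h\cap\g_{\bar 1}\subset T$, and crucially the hypothesis $[\h\cap\g_{\bar 1},\h\cap\g_{\bar 1}]=\h\cap\g_{\bar 0}$ gives
\[
[\h\cap\g_{\bar 0},T]=\bigl[[\h\cap\g_{\bar 1},\h\cap\g_{\bar 1}],T\bigr]\subset[[T,T],T]\subset T,
\]
so $[\h,T]\subset T+[T,T]$. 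The bracket $[\h,[T,T]]$ is then controlled by Jacobi. This is the step where I expect the hypothesis on the bracket to play its essential role; without it $[\h\cap\g_{\bar 0},T]$ need not land in $\widetilde\h$, which is the main technical obstacle.

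Once $\widetilde\h$ is known to be a subalgebra, the inclusion $\h\cap\g_{\bar 1}\subsetneq T$ provides an element of $T\setminus\h$, so $\h\subsetneq\widetilde\h$, and maximality of $\h$ forces $\widetilde\h=\g$. Projecting onto $\g_{\bar 1}$ and using that $[T,T]\subset\g_{\bar 0}$, the odd part of $\widetilde\h$ is $(\h\cap\g_{\bar 1})+T=T$, whence $\g_{\bar 1}=T$. This proves maximality of $\h\cap\g_{\bar 1}$ as a Lie subtriple of $\g_{\bar 1}$.

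Finally, for the parenthetical assertion, assume $\h$ is simple and $\h\neq\g_{\bar 0}$. If $\h\cap\g_{\bar 1}=0$, then $\h\subset\g_{\bar 0}$ and the maximality of $\h$ as a subalgebra (combined with $\g_{\bar 0}\subsetneq\g$) would force $\h=\g_{\bar 0}$, contradicting our hypothesis; thus $\h\cap\g_{\bar 1}\neq 0$. Then $I:=(\h\cap\g_{\bar 1})+[\h\cap\g_{\bar 1},\h\cap\g_{\bar 1}]$ is a nonzero graded subspace of $\h$; using that $\h\cap\g_{\bar 1}$ is itself a Lie triple subsystem of $\h$, a short Jacobi computation shows $I$ is an ideal of $\h$, so by simplicity $I=\h$. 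Comparing graded parts gives $[\h\cap\g_{\bar 1},\h\cap\g_{\bar 1}]=\h\cap\g_{\bar 0}$, as claimed.
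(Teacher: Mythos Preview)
Your proof is correct and follows essentially the same approach as the paper: build the subalgebra $T+[T,T]$ containing $\h$, invoke maximality of $\h$, and compare graded parts. One minor simplification you could make: since the hypothesis gives $\h\cap\g_{\bar0}=[\h\cap\g_{\bar1},\h\cap\g_{\bar1}]\subset[T,T]$, you already have $\h\subset T+[T,T]$, so $\widetilde\h=T+[T,T]$ and the separate verification of $[\h,T+[T,T]]\subset\widetilde\h$ is subsumed by the fact (which you also check) that $T+[T,T]$ is a subalgebra.
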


 \begin{proof} Let $T=\g_{\bar1}$ and $\h$ a graded (or homogeneous) subalgebra, that is, compatible with the grading on $\g$. The condition $[\h\cap\g_{\bar1},\h\cap\g_{\bar1}]=\h\cap\g_{\bar0}$ means that $\g(\h\cap T)$,
    the   envelope of the triple $\h\cap T$, fills the whole $\h$. Assume now there is  another Lie triple system  $T'$ with $\h\cap T\subsetneq T'\subset T$.
     Then we have a chain of standard envelopes $\h=\g(\h\cap T)\subset  \g(T')\subset \g(T)=\g$. The maximality of $\h$ as a subalgebra gives that 
     either $\g(T')=\h$ or $\g(T')= \g$. The first case is not possible, due to the grading $\g(T')=T'\oplus (T')^2$. Thus the situation is the second one, and $T'=T$.
     
The simplicity of $\h$ is a sufficient condition to guarantee the  assertion $\g(\h\cap T)=\h$ when $ \h\cap T\ne 0$,    because  $\g(\h\cap T)$ is an ideal of $\h$.
If $ \h\cap T=0$, then $\h=\g_{\bar0}$ and  we of course exclude this situation.
\end{proof} 

Now we can apply this result to our setting, for $\g=\g_{\bar0}\oplus\g_{\bar1}$ the $\Z_2$-grading on $\g_2$ induced by $V\in\M_S$, i.e. 
  $\g_{\bar0}=\h_4^V$ and $\g_{\bar1}=T_V\M_S$, and the maximal subalgebras of $\g_2$ described in 
Proposition~\ref{pr_LTS_G2}. Perhaps the more subtle case is when $\h$ is a principal subalgebra, 
that is why we have devoted Section~\ref{se_adapted} to study adapted principal TDS. 
Thus, if a principal subalgebra $\h$ is adapted to $V$, then $\h\cap  \m_4^V$ is a 2-dimensional maximal subtriple of $   \m_4^V$ (Lemma~\ref{le_laparteimpardim2}
and Proposition~\ref{prop_todascasosegundo}).  
We can similarly apply the above proposition  to  the remaining maximal Lie subalgebras of $\g_2$.

\begin{corollary}\label{co_LTS_MSv1}   
For any $V\in\M_S$,  a principal subalgebra $\h_1$ adapted to $V$, $0\ne\ell\in V^\bot$, $0\ne\iu\in V $, and $W\in\M_S$ such that both $W\cap V $ and   $W\cap V^\bot $ are nonzero, then 
$$
\h_1\cap  \m_4^V,\quad \h_2^\ell\cap  \m_4^V,\quad \h_2^{\iu}\cap  \m_4^V ,\quad \h_4^W \cap  \m_4^V  ,\quad
$$
are maximal Lie triple subsystems of $\m_4^V$ ($\cong  T_V\M_S$) of dimensions $2$, $5$, $4$ and $4$ respectively.
 \end{corollary}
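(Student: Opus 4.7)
The plan is to derive each item by applying Proposition~\ref{prop_todascasosegundo} to the corresponding maximal Lie subalgebra of $\g_2$ from Proposition~\ref{pr_LTS_G2}, with respect to the $\Z_2$-grading $\g_2=\h_4^V\oplus\m_4^V$ induced by $V$. For each candidate $\h\in\{\h_1,\h_2^\ell,\h_2^\iu,\h_4^W\}$, maximality as a Lie subalgebra is already known from Proposition~\ref{pr_LTS_G2}, so the remaining obligations are: (i) verifying that $\h$ is $\theta_V$-graded, (ii) verifying either simplicity of $\h$ together with $\h\ne\h_4^V$ or the bracket equality $[\h\cap\m_4^V,\h\cap\m_4^V]=\h\cap\h_4^V$, and (iii) computing $\dim(\h\cap\m_4^V)$.

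Step (i) is immediate in every case. For $\h_1$, being adapted to $V$ is by definition $\theta_V$-homogeneity. For $\h_2^\ell$ with $\ell\in V^\bot$, writing $d=d_\0+d_\1\in\h_2^\ell$ yields $d_\0(\ell)\in V^\bot$ and $d_\1(\ell)\in V$ as the homogeneous parts of $d(\ell)=0$, so both must vanish separately; the analogous argument with $\iu\in V$ handles $\h_2^\iu$. For $\h_4^W$, $\theta_V$-stability is equivalent to $W=(W\cap V)\oplus(W\cap V^\bot)$, and in view of $\dim W=3$ this is precisely the hypothesis that both intersections are nontrivial.

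For step (iii) use $\dim(\h\cap\m_4^V)=\dim\h-\dim(\h\cap\h_4^V)$ together with the identification $\h_4^V\cong\mathfrak{so}(V^\bot)$ via restriction. Case~(1) is Lemma~\ref{le_laparteimpardim2}. For $\h_2^\ell$, the stabilizer of $\ell\in V^\bot$ in $\mathfrak{so}(V^\bot)$ is $\mathfrak{so}(\ell^\bot\cap V^\bot)\cong\mathfrak{so}(3)$ of dimension $3$, giving $8-3=5$. For $\h_2^\iu$ with $\iu\in V$, the action of $\h_4^V$ on $V$ factors through one of its two $\mathfrak{so}(3)$ summands acting as the full $\mathfrak{so}(V)$, so the stabilizer of $\iu$ in $\h_4^V$ has dimension $3+1=4$, giving $8-4=4$. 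For $\h_4^W$, a direct parametrisation of $\h_4^W\cap\h_4^V=\{d\in\g_2:d(V)\subset V,\,d(W)\subset W\}$ using the refined decomposition $\R^7=(V\cap W)\oplus(V\cap W^\bot)\oplus(V^\bot\cap W)\oplus(V^\bot\cap W^\bot)$ together with the derivation identity for the cross product pins $\dim(\h_4^W\cap\h_4^V)$ to $2$, so $\dim(\h_4^W\cap\m_4^V)=6-2=4$.

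Finally, step (ii): the sufficient condition in Proposition~\ref{prop_todascasosegundo} applies directly in the first three cases, as $\h_1\cong\mathfrak{so}(3)$ and $\h_2^\ell,\h_2^\iu\cong\mathfrak{su}(3)$ are simple and distinct from $\h_4^V\cong\mathfrak{so}(4)$. The main obstacle is case~(4), where $\h_4^W\cong\mathfrak{so}(3)\oplus\mathfrak{so}(3)$ is only semisimple and the bracket condition has to be checked by hand. My plan is to analyse how $\theta_V$ acts on the two simple ideals $A,B\subset\h_4^W$: since $-\id$ is not an automorphism of $\mathfrak{so}(3)$, $\theta_V|_{\h_4^W}$ either swaps the two factors (forcing a $3$-dimensional antifixed subspace, incompatible with the value $4$ obtained in step~(iii)) or preserves each factor, and then splits each $\mathfrak{so}(3)$ in the only nontrivial way, into a $1$-dimensional even line and a $2$-dimensional odd plane. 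Inside each such $\mathfrak{so}(3)$ the bracket of the odd $2$-plane recovers the even line (cf.\ the remark following Proposition~\ref{Leo} and the computation in the proof of Lemma~\ref{le_LTSapatadas}), and summing over $A$ and $B$ gives $[\h_4^W\cap\m_4^V,\h_4^W\cap\m_4^V]=\h_4^W\cap\h_4^V$, as required.
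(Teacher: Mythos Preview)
Your strategy coincides with the paper's: apply Proposition~\ref{prop_todascasosegundo} to each maximal subalgebra from Proposition~\ref{pr_LTS_G2} against the grading $\g_2=\h_4^V\oplus\m_4^V$. The execution, however, differs in two places worth flagging.

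First, a small gap in step~(i) for $\h_4^W$: the implication ``$W\cap V\ne0$ and $W\cap V^\bot\ne0$ $\Rightarrow$ $W=(W\cap V)\oplus(W\cap V^\bot)$'' is \emph{not} a matter of $\dim W=3$ alone; it uses that $W$ is closed under $\times$. Pick $0\ne\iu\in W\cap V$ and $0\ne\ell\in W\cap V^\bot$; then $\iu\times\ell\in W$ and, by $V\times V^\bot\subset V^\bot$, also $\iu\times\ell\in V^\bot$, so $\{\iu,\ell,\iu\times\ell\}$ is an orthogonal basis of $W$ realising the splitting. (The paper records this explicitly as Eqs.~\eqref{eq_parademo1}--\eqref{eq_parademo2}.) Without associativity of $W$ the claim fails for generic $3$-planes.

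Second, for step~(ii) in case~(4) the paper takes a different route from yours. It writes $\h_4^V=\langle\lambda_a,\rho_a:a\in V\rangle$ with the explicit derivations of Eqs.~\eqref{eq_Sl}--\eqref{eq_Sr}, identifies $\h_4^W\cap\h_4^V$ and $\h_4^V\cap\m_4^W$ concretely, checks $[\h_4^V\cap\m_4^W,\h_4^V\cap\m_4^W]=\h_4^V\cap\h_4^W$ by direct bracket computation, and then swaps the roles of $V$ and $W$ to obtain the desired equality. Your argument instead analyses how $\mathrm{Ad}(\theta_V)$ acts on the two simple ideals of $\h_4^W\cong\mathfrak{so}(3)\oplus\mathfrak{so}(3)$: ruling out the swap by the dimension count from step~(iii), and then using that the only nontrivial $\Z_2$-grading on $\mathfrak{so}(3)$ is $1+2$ with $[\text{odd},\text{odd}]=\text{even}$. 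This is correct and arguably cleaner, since it avoids the explicit $D_{x,y}$-calculus; the paper's approach, in exchange, yields the concrete description $\h_4^W\cap\m_4^V=D_{V\cap W^\bot,\,V^\bot\cap W^\bot}$ of Eq.~\eqref{eq_trocitoauxiliarv2}, which it reuses later. Your dimension arguments via stabilisers in $\h_4^V\cong\mathfrak{so}(V^\bot)$ are likewise valid alternatives to the paper's coordinate computations.
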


 \begin{proof}
In order to apply Proposition~\ref{prop_todascasosegundo}, we only have to check that 
$\h =\big(\h \cap \h_4^V )\,\oplus\,\big(\h \cap  \m_4^V)$ for $\h\in\{\h_2^\ell,\h^{\iu}_2,\h_4^W\}$,  as well as the fact that the envelope  $\g(\h_4^W \cap  \m_4^V)=\h_4^W$.
The first assertion reduces to a computation of dimensions. The paper \cite{LY} makes most of these computations.
In order to give more details, we will again use the operators $D_{x,y}\vert_{\R^7}$ as in Remark~\ref{re_ppales}, 
we fix a unit vector $\ell\in V^\bot$ 
and we consider the derivations $\lambda_a\colon \R^7\to \R^7$ ($a\in V$)
defined by
\begin{equation}\label{eq_Sl}  
\lambda_a(v)=0,\quad
\lambda_a(\ell)=a\t \ell,\quad  
\lambda_a(v\t \ell)=(a\times v)\times \ell-\esc{v,a}\ell,     
\end{equation}
for any $v\in V$.
It turns out that $\{d\in\der(\R^7,\t):d(V)=0\}=\{\lambda_a\mid a\in V\}\cong (V,\t)$ is  a three dimensional simple subalgebra. Besides 
$\h_4^V=\esc{\{ \lambda_a,\rho_a:a\in V  \}}$, for the derivations $\rho_a\colon \R^7\to \R^7$ ($a\in V$) given by
\begin{equation}\label{eq_Sr}  
\rho_a(v)= 2a\t v,\quad
\rho_a(\ell)=-a\t \ell,\quad 
  \rho_a(v\t \ell)= (a\times v)\times \ell+\esc{v,a}\ell.
\end{equation}
It is a straightforward computation that $[\lambda_a,\lambda_b]=2\lambda_{a\t b}$, $[\lambda_a,\rho_b]=0$, and  
$[\rho_a,\rho_b]=2\rho_{a\t b}$  for all $a,b\in V$. 
(To be precise, $\lambda_a$ and $\rho_a$ are, respectively,  the restrictions to $\R^7$ of the derivations of octonions $d_a$ and $D_a$ in \cite[Eq.~(2.9)]{LY}.)
This shows the two simple ideals of $\h_4^V$. Now  the required dimensions are   easy to compute. 
For $V$, $W$, $\iu$ and $\ell$ as in the statement of the corollary,

\begin{itemize} 
\item We know that $\dim\h_2^\ell=8$ (isomorphic to $\mathfrak{su}(3)$). Now we find
$$
 \h_2^\ell\cap (\g_2)_{\bar0}=\{\lambda_a+\rho_a:a\in V\}, 
 $$
a 3-dimensional subalgebra (naturally isomorphic to  $(V,\t)$), and
$$
  \h_2^\ell\cap  \m_4^V =\{d\in(\g_2)_{\bar1}:d(\ell)=0\} =\esc{\{D_{v,v\t\ell}:v\in V\}} , $$
of dimension 5 (isomorphic as a vector space to the space of symmetric tensors $S^2(V)$). 
\item Again $\dim\h_2^{\iu}=8$ and we are computing $\dim(\h_2^{\iu}\cap (\g_2)_{\bar i})$ for $i=0,1$.
First   check that 
$$
\h_2^{\iu}\cap (\g_2)_{\bar0}=\{\lambda_a:a\in V\}\oplus\esc{\{\rho_{\iu}\}},
$$
clearly of dimension 4. Then take a unit vector $\ju\in V\cap\iu^\bot$ to  compute explicitly
$$
\h_2^{\iu}\cap (\g_2)_{\bar1}= \{D_{\iu\t\ju,v\t\ell}-D_{\ju,(v\t\iu)\t\ell}:v\in V\} \oplus\esc{
\{  D_{\iu\t\ju, \ell}-D_{\ju, \iu\t\ell} \} } ,
$$
which is     4-dimensional too.
\item If we choose nonzero vectors $\ell$ and $\iu$ such that $\ell\in W\cap V^\bot$ and $\iu\in W\cap V $, then  
\begin{equation}\label{eq_parademo1}
W\cap V^\bot=\esc{\{ \ell,\iu\t\ell  \} },\quad
W\cap V=\esc{\{\iu\}},
\end{equation}
and, for any $0\ne\ju\in V$, $\ju\perp\iu$, we have
\begin{equation}\label{eq_parademo2}
W^\bot\cap V=\esc{ \{\ju,\iu\t\ju  \}},\quad
W^\bot\cap V^\bot=\esc{\{\ju\t\ell,(\iu\t\ju )\t\ell\}},
\end{equation}
since $\ell$ orthogonal to $V=\esc{\{\iu,\ju,\iu\t\ju  \}}$ implies that both $\ju$ and  $\iu\t\ju $ are orthogonal to $\ell$ and $\iu\t\ell$.
Now $\h_4^W=D_{W,W}+D_{W^\bot,W^\bot}\cong\mathfrak{so}(4)$ trivially decomposes as the   sum of 
\begin{equation}\label{eq_trocitoauxiliar}
     \h_4^W \cap  \m_4^V=D_{V\cap W,V^\bot\cap W}+D_{V\cap W^\bot,V^\bot\cap W^\bot} 
\end{equation}
and the algebra $\h_4^W \cap \h_4^V$, which is the sum of
$D_{V\cap W,V\cap W }$ (zero) and the one-dimensional subspaces $D_{V^\bot\cap W, V^\bot\cap W}$, $D_{V^\bot\cap W^\bot,V^\bot\cap W^\bot}$and $D_{V \cap W^\bot,V \cap W^\bot}$. Here we have used   that $D_{x,x}=0$ while $D_{x,y}\ne 0$ if $x$ and $y$ are linearly independent. Thus 
$$
\h_4^W \cap \h_4^V=\esc{\{D_{\ell ,\iu\t \ell},D_{\ju\t\ell ,\iu\t(\ju\t\ell) },D_{\ju ,\iu\t \ju}\}},
$$
 which has dimension 2 since, for any $x,y,z\in\R^7,$ 
 \begin{equation*}\label{eq_sumaciclica}
 D_{x\t y,z}+D_{y\t z,x}+D_{z\times x,y}=0.
\end{equation*}
 (In fact, $ \h_4^W \cap \h_4^V$ is abelian, a Cartan subalgebra of $\h_4^W$.) 
Coming back to the other piece of $\h_4^W$ \eqref{eq_trocitoauxiliar},  the above cyclic identity   jointly with the fact that 
  $V^\bot\cap W=(V \cap W^\bot)\t(V^\bot\cap W^\bot)    $ and both 
  $V \cap W^\bot$ and $V^\bot\cap W^\bot$ remain invariant under multiplication with $V\cap W$, permit us to see that the first summand in \eqref{eq_trocitoauxiliar} is contained in the second one, and hence   
\begin{equation}
    \label{eq_trocitoauxiliarv2}
  \h_4^W \cap  \m_4^V= D_{V\cap W^\bot,V^\bot\cap W^\bot}.
\end{equation}
  Thus $\h_4^W \cap  \m_4^V$   is naturally isomorphic (as a vector space)
  to $(V\cap W^\bot)\o(V^\bot\cap W^\bot)$ ($x\o y\mapsto D_{x,y}$) and so has dimension 4. 
\end{itemize}
Finally we need to check that $[\h_4^W \cap  \m_4^V,\h_4^W \cap  \m_4^V]=\h_4^W \cap  \h_4^V$. 
The   quickest way to check this (alternative to the use of \eqref{eq_trocitoauxiliarv2})
is to note that $\h_4^W \cap  \h_4^V=\esc{\{ \lambda_{\iu},\rho_{\iu}\}}$
while $\m_4^W \cap  \h_4^V=\esc{\{ \lambda_{\ju},\rho_{\ju},\lambda_{\iu\t\ju},\rho_{\iu\t\ju}\}}$, so that
$[\h_4^V \cap  \m_4^W,\h_4^V \cap  \m_4^W]=\h_4^V \cap  \h_4^W$ and then interchange the roles of $V$ and $W$.
\end{proof}

\begin{remark}
  {\rm   
  For any $\Z_2$-grading on an arbitrary  Lie algebra $\g=\g_{\bar0}\oplus\g_{\bar1}$, the even and the odd components are orthogonal with respect to the Killing form.
  In particular the Lie triple systems $\h_4^W \cap  \m_4^V$ and $\m_4^W \cap  \m_4^V$ are orthogonal, so that they correspond to  reflective totally geodesic submanifolds. 
Recall that a submanifold is reflective if and only if it is a connected component
of the fixed point set of an involutive isometry \cite{Leung}.  
In symmetric spaces, being a reflective totally geodesic submanifold is equivalent to the fact that the orthogonal of the related triple system is another LTS \cite[Proposition 11.1.5]{gallegosvarios}. Note that this agrees with Remark \ref{re_polar}.  
}
\end{remark}  
 
This corollary proves that all the found LTS are  maximal.  The fact that we have described \emph{all} the maximal Lie triple subsystems up to conjugation  could have been concluded 
by comparison of dimensions with the ones
from the works   
\cite{Klein} and \cite{Kollross}, which provided a list of maximal LTS in terms of roots and Satake diagrams.  But we can state a stronger result: they are not only all the maximal LTS of $\m_4^V$ up to conjugation, but they are \emph{all} the maximal LTS  of $\m_4^V$.   An elementary    proof is achieved independently of the cited papers, besides the fact that computations in \cite{Klein} were developed with Maple.

\begin{theorem}\label{pr_LTS_MSv1}   
 Fixed $V\in\M_S$, the maximal Lie triple subsystems of $\m_4^V$ ($\cong  T_V\M_S$) are exactly:
 \begin{itemize} 
  \item[(1)] $\h_1\cap  \m_4^V$, for some principal TDS $\h_1$ adapted to $V$; 
  \item[(2)] $\h_2^\ell\cap  \m_4^V$, for a fixed $0\ne\ell\in V^\bot$;
  \item[(3)] 
             $\h_2^{\iu}\cap  \m_4^V$, for a fixed $0\ne\iu\in V $; \vspace{2pt}
        \item[(4)]   $\h_4^W \cap  \m_4^V$, for a fixed $W\in\M_S$ such that both $W\cap V $ and   $W\cap V^\bot $ are nonzero. 
           \end{itemize}
  \end{theorem}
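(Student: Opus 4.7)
Corollary~\ref{co_LTS_MSv1} already supplies the easier direction that each of the four families is a maximal Lie triple subsystem of $\m_4^V$; what remains is exhaustiveness. The plan is to enlarge any given maximal $T\subset\m_4^V$ to a maximal LTS of the ambient $\g_2$ and invoke the complete classification of the latter provided by Propositions~\ref{pr_LTS_G2} and~\ref{prop_todascasoalg}. Viewing $T$ as a subtriple of $\g_2$ through $\m_4^V\subset\g_2$, extend it to a maximal LTS $\tilde T$ of $\g_2$. Then $\tilde T\cap\m_4^V$ is an LTS of $\m_4^V$ containing $T$, so by maximality it equals either $T$ or $\m_4^V$; the second possibility would force $\tilde T\supseteq\m_4^V+[\m_4^V,\m_4^V]=\h_4^V+\m_4^V=\g_2$ (discarding the trivial case $T=\m_4^V$). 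Hence $T=\tilde T\cap\m_4^V$, and by Proposition~\ref{prop_todascasoalg} applied to $\g_2$, $\tilde T$ is either a maximal subalgebra of $\g_2$---so by Proposition~\ref{pr_LTS_G2} a principal TDS, some $\h_2^\ell$, or some $\h_4^W$---or the odd part of a $\Z_2$-grading on $\g_2$, which (all such gradings being conjugate) takes the form $\tilde T=\m_4^W$ for some $W\in\M_S$.

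\textbf{Case analysis for the first three shapes of $\tilde T$.} If $\tilde T=\h$ is a principal TDS, then $\dim T\le\dim\h=3$; the dimensions $0$ and $3$ are ruled out by triviality and by the identity $[\h,\h]=\h\not\subseteq\h_4^V\cap\m_4^V=0$ respectively, while $\dim T=1$ is impossible since for any $0\ne d\in\m_4^V$ the restriction $d|_{V^\bot}\colon V^\bot\to V$ has nontrivial kernel by rank-nullity (dimensions $4$ and $3$), yielding $0\ne\ell\in V^\bot\cap\ker d$ and hence $d\in\h_2^\ell\cap\m_4^V$, a strictly larger LTS. So $\dim T=2$ and Lemma~\ref{le_laparteimpardim2} forces $\h$ adapted to $V$, producing type $(1)$. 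If $\tilde T=\h_2^\ell$, decompose $\ell=v+w$ with $v\in V$ and $w\in V^\bot$; since each $d\in\m_4^V$ interchanges $V$ and $V^\bot$, the condition $d(\ell)=0$ splits cleanly as $d(v)=d(w)=0$, so $T=\h_2^v\cap\h_2^w\cap\m_4^V$, and a dimension comparison with the thresholds $4$ and $5$ from Corollary~\ref{co_LTS_MSv1} shows that maximality forces $\ell\in V$ or $\ell\in V^\bot$, giving type $(3)$ or $(2)$. If $\tilde T=\h_4^W$ the identification gives type $(4)$ directly, with the two incidence conditions on $W$ imposed by the requirement that $T$ attain the maximal dimension $4$.

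\textbf{The $\m_4^W$ case and main obstacle.} The delicate situation is $\tilde T=\m_4^W$, and the key is to reduce it to the $\h_4^{(\cdot)}$ case. If $\theta_V$ and $\theta_W$ commute in $G_2$ and $W\ne V$, then $\theta_V\theta_W$ is a nontrivial order-two element of $G_2$, hence equals $\theta_U$ for a unique $U\in\M_S$ by the bijection $\M_S\leftrightarrow\widetilde\M_S$ of Proposition~\ref{pr_tgm_casoG2}; on $\m_4^V$ the operator $\mathrm{Ad}(\theta_W)$ then acts as $-\mathrm{Ad}(\theta_U)$, whence the $(-1)$-eigenspace of $\mathrm{Ad}(\theta_W)$ inside $\m_4^V$ is precisely $\h_4^U\cap\m_4^V$, with $U=(V\cap W)\oplus(V^\bot\cap W^\bot)$. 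A small dimension count (the constraint $\dim U=3$ forces $\dim(V\cap W)=1$) shows $U$ satisfies the incidence conditions of type $(4)$. I expect the principal obstacle to be the noncommuting sub-case: the plan is to argue that if $\theta_V\theta_W\ne\theta_W\theta_V$, then the product has order strictly greater than $2$ in $G_2$, the subspace $\m_4^V$ is no longer $\mathrm{Ad}(\theta_W)$-stable, and an analysis of the representation of the dihedral subgroup $\esc{\theta_V,\theta_W}\subset G_2$ on $\g_2$ shows that $\dim(\m_4^W\cap\m_4^V)$ falls strictly below $4$, so $T$ can be enlarged inside $\m_4^V$, contradicting maximality.
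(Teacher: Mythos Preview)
Your overall strategy—extend $T$ upward inside $\g_2$ and then intersect back with $\m_4^V$—is the right one, and it is also the paper's. But two of your case analyses are genuinely incomplete, and one structural choice creates unnecessary work.

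\textbf{The $\m_4^W$ case is avoidable.} You extend $T$ to a maximal \emph{LTS} $\tilde T$ of $\g_2$, which by Proposition~\ref{prop_todascasoalg} may be either a maximal subalgebra or an odd part $\m_4^W$. The paper instead observes that the standard envelope $\g(T)=T\oplus T^2$ is already a proper \emph{subalgebra} of $\g_2$ (proper because $T\subsetneq\m_4^V$ forces $\g(T)\subsetneq\g_2$), and extends that to a maximal subalgebra $\h$. By Proposition~\ref{Leo} this $\h$ is also a maximal LTS, so one still gets $T=\h\cap\m_4^V$, but now $\h$ is one of the three subalgebra types from Proposition~\ref{pr_LTS_G2} and the $\m_4^W$ case never appears. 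Your non-commuting sub-case, which you yourself flag as the principal obstacle and leave as a sketch, is thus simply absent from the paper's argument. (Incidentally, your line ``$\tilde T\supseteq\m_4^V+[\m_4^V,\m_4^V]$'' is wrong as written: a Lie triple system need not contain binary brackets of its elements. The correct reason that $\m_4^V\subseteq\tilde T$ forces $\tilde T=\m_4^V$ is that $\m_4^V$ is itself a maximal LTS of $\g_2$.)

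\textbf{The $\h_4^W$ case needs real work.} Saying that ``the incidence conditions on $W$ are imposed by the requirement that $T$ attain dimension~$4$'' is not enough. When $W$ fails to meet $V$ and $V^\bot$ in the prescribed way, the paper carries out a genuine trichotomy (its cases a), b), c)) on the possible intersection pattern of $V,V^\bot,W,W^\bot$, and in each branch either produces a nonzero vector $u\in V$ or $u\in V^\bot$ killed by every $d\in T$ (so that $T$ is actually of type~(2) or~(3) for a \emph{different} maximal subalgebra $\h_2^u$), or reaches a dimension contradiction. The point is not that $T$ fails to be maximal, but that it coincides with one of the other listed types; a bare dimension count does not show this.

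On the positive side, your rank--nullity argument ruling out $\dim T=1$ in the principal case is cleaner than the paper's: any single odd derivation $d$ has $d|_{V^\bot}\colon V^\bot\to V$ with nontrivial kernel, so $d\in\h_2^\ell\cap\m_4^V$ for some $\ell\in V^\bot$, and a one-dimensional $T$ can never be maximal. The paper instead does an explicit computation with projections $a_0,a_1,b_0,b_1$ of basis elements of the principal TDS. Your $\h_2^\ell$ case (splitting $\ell=v+w$ along $V\oplus V^\bot$) is essentially the paper's, though the paper pushes one step further to the associative subalgebra $\langle v,w,v\times w\rangle$ to get the needed dimension bound.
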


 \begin{proof}
Let $T$ be a maximal Lie triple system of $\m_4^V$. 
 As $T\subset \m_4^V$, then $T\subset \g(T)=T\oplus T^2\subset \g(\m_4^V)=\g_2$, and we can take 
  a subalgebra $\h$ of $\g_2$ maximal with the property   that $T\subset\h$. 
  Note that $\h$ is proper subalgebra of $\g_2$. This implies that $\h$ is maximal subalgebra of $\g_2$ and Proposition \ref{Leo} gives that it is a maximal Lie triple subsystem of $\g_2$. 
  Applying Proposition~\ref{pr_LTS_G2}, either $\h$ is principal, or there is $0\ne u\in\R^7$ with $\h=\h_2^u$ or there is $W\in\M_S$ with $\h=\h_4^W$.
  In any of the cases we have $T\subset \h\cap\m_4^V$, so that the maximality of $T$ among the LTS contained in $\m_4^V$ gives  $T= \h\cap\m_4^V$.
  We would like to prove that we can choose, respectively,  the principal subalgebra adapted to $V$, the   vector $u\in V\cup V^\bot$, and the associative subalgebra $W$ intersecting both $V$ and $V^\bot$, since otherwise maximality would fail. 
  Let's prove the veracity of this assertion according to the type of subalgebra $\h$.
  Denote, as usual, the $\Z_2$-grading with odd part $\m_4^V$ by $\g_2=\g_{\bar0}\oplus\g_{\bar1}$, 
  and recall that the standard envelope of $\m_4^V$ is the whole $\g_2$.
\smallskip
  
  \boxed{\textrm{$\h $ principal} } 
  If $T$ is contained in a principal TDS $\h_1$, note that $\dim T\le 2$. 
  Otherwise, $T=\h_1\subset\m_4^V$ and $\h_1=[\h_1,\h_1]\subset \g_{\bar0}\cap\g_{\bar1}=0$, an absurd. 
  If $\dim T=2$, then $\h_1$ is adapted to $V$ 
  by
  Lemma~\ref{le_laparteimpardim2} and we have finished.  
  Let us prove  by reductio ad absurdum that the situation 
  $\dim T= 1$, with $T$   maximal LTS in $\m_4^V$, is not possible. As in the proof of Lemma~\ref{le_LTSapatadas}, we can take $T=\esc{\{h_1\}}$ for some nice basis $\{h_1,h_2,h_3\}$   of $\h_1$ such that  $[h_i,h_{i+1}]=h_{i+2}$ for all $i$ (sum modulo 3). 
   Write $h_2=a_0+a_1$ and $h_3=b_0+b_1$ with $a_i,b_i\in(\g_2)_{\bar i}$.   Since $\h_1\cap\m_4^V$ is a LTS containing $T$, then $\h_1\cap\m_4^V=T$, so that 
  both the projections of $h_2$ and $h_3$ on the even part, i.e. $a_0$ and $b_0$, are nonzero ($h_2$ and $h_3$ belong to $\h_1\setminus T$).
  Projecting the relations $[h_i,h_{i+1}]=h_{i+2}$ on $\g_{\bar0}$ and  $\g_{\bar1}$, we get 
  $$
  [h_1,a_0]=b_1,\ 
  [h_1,a_1]=b_0,\ 
  [h_1,b_0]=-a_1,\ 
  [h_1,b_1]=-a_0. 
  $$
  In particular,   $a_1$ and $b_1$ are nonzero too. Note that $[h_1,[b_0,a_1]]=[-a_1,a_1]+[b_0,b_0]=0$, so that 
  $\esc{\{h_1,[b_0,a_1]\}}\subset \m_4^V$ is abelian, in particular it is both a subalgebra and a LTS.  As it contains $T$, the maximality of $T$ implies $[b_0,a_1]\in T$. But then it is easy to check that $\esc{\{h_1,a_1 \}}\subset \m_4^V$ is a LTS too: simply because $[h_1,a_1,h_1]=a_1$ and $[h_1,a_1,a_1 ]=[b_0,a_1]\in  T=\R h_1$. Again the maximality of $T$ as triple subsystem of $\m_4^V$ gives $a_1\in T=\R h_1$. Hence $b_0=[h_1,a_1]=0$, a contradiction.
  \smallskip

  \boxed{\textrm{$\h=\h_2^u $} } If $u\in V\cup V^\bot$, we are done. Otherwise, $u=u_1+u_2$ with $u_1\in V$, $u_2\in V^\bot$, both $u_i\ne0$.
  For any $d\in T$, we know $0=d(u)=d(u_1)+d(u_2)$, but $d(u_1)\in V^\bot$ and $d(u_2)\in V$, so that $d(u_1)=d(u_2)=0$. If we take $W=\esc{\{u_1,u_2,\u_1\t u_2\}}\in\M_S$, then $d(W)=0$ for all $d\in T$. Thus $T$ is contained in the LTS  $\h_4^W\cap \m_4^V$, and by maximality $ T= \h_4^W\cap \m_4^V$. But the
  associative subalgebra $W$ we have chosen satisfies that $W$ intersects both $V$ and $V^\bot$, so that $T$ would have dimension $4$ as in  the proof of Corollary~\ref{co_LTS_MSv1}. This is a contradiction since $T\subset \{d\in\g_2:d(W)=0\}$, which is a 3-dimensional simple subalgebra, as mentioned also in Corollary~\ref{co_LTS_MSv1}  after Eq.~\eqref{eq_Sl}.
   \smallskip
  
  \boxed{\textrm{$\h=\h_4^W $} } 
  If both $W\cap V $ and   $W\cap V^\bot $ are nonzero, we are done. If $W=V$, $ T= \h_4^W\cap \m_4^V=0$ is not maximal.
   Otherwise, we begin by  proving  that we have one of the next situations:
  \begin{itemize}
  \item[ a)] $V\cap W\ne 0$ and $W\cap V^\bot =0$;
  \item[ b)] $0=V\cap W $ and $\dim(V^\bot\cap W)=\dim(V\cap W^\bot)=\dim(V^\bot\cap W^\bot)=1$;
  \item[ c)] $0=V\cap W= V^\bot\cap W=V\cap W^\bot $ and $\dim(V^\bot\cap W^\bot)=1$.
  \end{itemize}
  In fact, it is not possible  $V^\bot\cap W^\bot=0$, since then   the dimension of the subspace $V^\bot\oplus W^\bot $ would be $8$ (in $\R^7!$). 
  If $0=V\cap W $, necessarily $\dim(V^\bot\cap W^\bot)=1$ because if $\dim(V^\bot\cap W^\bot)\ge2$, then
  $ 0\ne (V^\bot\cap W^\bot)^{\t2}\subset V\cap W$, getting a contradiction.
  (Take into account that the cross product of two independent elements is always nonzero.) Moreover, $(V^\bot\cap W )^{\t2}\subset V\cap W$ and $(V \cap W^\bot)^{\t2}\subset V\cap W$, so that both 
  $V^\bot\cap W$ and $V\cap W^\bot$ have at most   dimension 1. It remains to prove that $V^\bot\cap W$ and $V\cap W^\bot$ have the same dimension ($0$ or $1$). This is easy to achieve: if, for instance, $V^\bot\cap W\ne0$, then $0\ne(V^\bot\cap W)\t(V^\bot\cap W^\bot)\subset V\cap W^\bot$
  (and similarly if we begin with $V\cap W^\bot\ne0$). We next discuss how to find a suitable subalgebra according to the three different possible cases.

First, if the situation is $a)$,   the dimension of $V\cap W$ is not $3$ ($V\ne W$), and it is not $2$ because any two independent vectors in an associative subalgebra $V$ generate $V$ as an algebra. So it is one-dimensional and we can find a nonzero  vector with $V\cap W=\R \iu$.  
Now, for any $d\in T=\h_4^{W}\cap\m_4^V$,  we have $d(W)\subset W $ and $d(V)\subset V^\bot$, so that $d(\iu)\in W\cap V^\bot =0$ and $T\subset \h_2^{\iu}$. By maximality ($\h_2^{\iu}\cap\m_4^V$ is a Lie triple system), 
$T=\h_2^{\iu}\cap\m_4^V$ with $\iu\in V$, and we have found a subalgebra as desired.

Second,  in the situation b), take $\ell$ such that $V^\bot\cap W=\R\ell$. Arguing as above, $d(\ell)\in V\cap W=0$ for all $d\in T=\h_4^{W}\cap\m_4^V$ so that $T\subset \h_2^{\ell}$. Again we have finished since the maximality forces
$T=\h_2^{\ell}\cap\m_4^V$ with $\ell\in V^\bot$. We get here a contradiction, since  $\dim(\h_4^{W}\cap\m_4^V)=4$  but   $\dim(\h_2^{\ell}\cap\m_4^V)=5$, so that both LTS cannot be equal.

Third,  in the situation c), take $\ell$ such that  $V^\bot\cap W^\bot=\R \ell$.
Thus, $d(\ell)\in V\cap W^\bot=0$ for all $d\in T$ and we proceed as in the above case ($\ell\in V^\bot$).
    \end{proof}

  \begin{remark}
  {\rm
  There seems to be a missing case: clearly $\m_4^W \cap  \m_4^V$ is a Lie triple subsystem of $\m_4^V$, and it is maximal since it is conjugated to 
  $\h_4^W \cap  \m_4^V$.  Conjugated but different. Why does it not appear in the list given by   Theorem~\ref{pr_LTS_MSv1}?
  The answer is that it does appear, but it is camouflaged. 
    If $V$ and $W$ are associative subalgebras as in the above proof, 
  and $\iu$, $\ju$ and $\ell$ are as in Equations~\eqref{eq_parademo1} and \eqref{eq_parademo2},
  take $F\in G_2$ determined by $F(\iu)=\iu$, $F(\ju)=\ju$ and $F(\ell)=\ju\t\ell$, as in Section~\ref{sub_M_S}. (There exists such an automorphism of $(\R^7,\t)$ 
  because both $\{\iu,\ju,\ell\}$ and $\{\iu,\ju,\ju\t\ell\}$ are orthonormal systems such that the third vector is orthogonal to the associative subalgebra
  generated by the  other two.)  
  Now $F(V^\bot\cap W)=V^\bot\cap W^\bot$, $F(V^\bot\cap W^\bot)=V^\bot\cap W$, $F(V\cap W^\bot)=V\cap W^\bot$ and $F(V\cap W)=V\cap W$. Taking into account Eq.~\eqref{eq_trocitoauxiliarv2}, we get $\m_4^W \cap  \m_4^V=\h_4^{F(W)} \cap  \m_4^V$, where $F(W)\in\M_S$ intersects both $V$ and $V^\bot$.
 } \end{remark}
 
  
\subsection{Maximal totally geodesic submanifolds of $\M_S$}

Now it is very easy to find the submanifolds of the symmetric space with tangent spaces those in Theorem~\ref{pr_LTS_MSv1}.

\begin{theorem}\label{pr_tgm_MSv1}  
Any maximal  totally geodesic submanifold of $\M_S$ is one of the following:

\noindent of dimension 2,
 \begin{itemize} 
  \item[(1)] $\N_1^\h:=\{ V\in\M_S\colon \h\textrm{ adapted to }  V \}$, for some principal TDS $\h$; 
  \end{itemize}
  of dimension 5,
  \begin{itemize} 
  \item[(2)] $\N_2^u:=\{V\in\M_S\colon u\in V^\bot\}$, for a fixed $0\ne u\in \R^7$;
   \end{itemize}
  and, of dimension 4,
  \begin{itemize} 
  \item[(3)] 
             $\N_3^u:=\{V\in\M_S\colon u\in V \}$, for a fixed $0\ne u\in \R^7$;  \vspace{2pt}
        \item[(4)]   $\N_4^W :=\{ V\in\M_S\colon W\cap V\ne0, W^\bot\cap V\ne0\}$, for a fixed $W\in\M_S$.   

   \end{itemize}
 \end{theorem}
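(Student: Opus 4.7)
The plan is to invoke the general correspondence recalled in Section~\ref{se_curvatura}: the connected totally geodesic submanifolds of $\M_S$ through a fixed point $V$ are in bijection with the Lie triple subsystems of $T_V\M_S\cong\m_4^V$, and this bijection preserves maximality. Theorem~\ref{pr_LTS_MSv1} has already enumerated the four families of maximal Lie triple subsystems of $\m_4^V$; the only remaining work is to identify the maximal totally geodesic submanifold corresponding to each of them and check that it coincides with one of the sets $\N_1^\h$, $\N_2^u$, $\N_3^u$, $\N_4^W$ in the statement.

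For a maximal LTS $\mathfrak{s}\subset\m_4^V$, the associated maximal connected totally geodesic submanifold through $V$ is the orbit $K\cdot V$, where $K$ is the connected Lie subgroup of $G_2$ whose Lie algebra equals the standard envelope $\g(\mathfrak{s})=[\mathfrak{s},\mathfrak{s}]\oplus\mathfrak{s}$. The computations in the proof of Corollary~\ref{co_LTS_MSv1} identify these envelopes as the principal subalgebra $\h_1$ (case 1), $\h_2^\ell$ with $\ell\in V^\perp$ (case 2), $\h_2^u$ with $u\in V$ (case 3), and $\h_4^W$ (case 4); simplicity handles the first three envelopes, while for the fourth the explicit bracket computation yields $[\h_4^W\cap\m_4^V,\h_4^W\cap\m_4^V]=\h_4^W\cap\h_4^V$. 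The corresponding subgroups are exactly $H^{\h_1}$, $H^\ell$, $H^u$ and $H^W$ from Proposition~\ref{pr_tgm_casoG2}, and each of the four orbits is manifestly contained in the matching set $\N_i$ in the statement: Proposition~\ref{pr_lasadaptadas} directly gives $H^{\h_1}\cdot V=\N_1^{\h_1}$; any $F\in H^u$ fixes $u$, so preserves the conditions $u\in V^\perp$ and $u\in V$ respectively; and any $F\in H^W$ preserves both $W$ and $W^\perp$, hence the intersection conditions defining $\N_4^W$.

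The reverse inclusions, i.e.\ the transitivity of $K$ on each $\N_i$, form the main obstacle. I would handle them by a dimension/connectedness argument: linearising the defining incidence conditions for each $\N_i$ at the base point $V$ (just as was done for $H^\ell$, $H^V$, and $\widetilde\M_S$ inside $G_2$ in the proof of Proposition~\ref{pr_tgm_casoG2}), one identifies $T_V\N_i$ with the corresponding maximal Lie triple subsystem $\mathfrak{s}$, so $\N_i$ is a submanifold of dimension $2$, $5$, $4$, $4$ matching Theorem~\ref{pr_LTS_MSv1} and hence matching the dimension of $K\cdot V$. Connectedness of each $\N_i$, and thus the equality $K\cdot V=\N_i$, is then obtained from an explicit transitivity argument inside the relevant subgroup $K\cong\SO(3),\SU(3),\SU(3),\SO(4)$, completely analogous to the $G_2$-transitivity on $\M_S$ proved in Section~\ref{sub_M_S} (lift an orthonormal basis of the small invariant piece compatibly with the cross product). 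Once this is established, each $\N_i$ is a maximal totally geodesic submanifold, and conversely every maximal totally geodesic submanifold arises in this way, since its tangent space at any of its points is, by Theorem~\ref{pr_LTS_MSv1}, one of the four listed types.
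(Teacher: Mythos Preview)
Your approach is essentially the paper's, just organized in the reverse order. The paper goes straight to the computation you call ``linearising the defining incidence conditions'': for each $\N_i$ and each $V\in\N_i$ it determines the set $\{d\in\m_4^V:\exp(td)(V)\in\N_i\ \forall t\}$ and identifies it with the corresponding LTS from Theorem~\ref{pr_LTS_MSv1}. This single step simultaneously shows $K\cdot V\subset\N_i$ and that $T_V\N_i=\mathfrak s_i$, so your preliminary envelope/orbit discussion is subsumed. The one nontrivial case is~(4): there the paper rewrites $\N_4^W$ as $\{V:\theta_V\theta_W=\theta_W\theta_V\}\setminus\{W\}$ and differentiates that relation, which is cleaner than parsing the intersection conditions directly. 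Your ``main obstacle'' (connectedness/transitivity) is exactly the content of Remark~\ref{re_loscocientesquesalen}, where each $\N_i$ is exhibited as a single orbit of the relevant subgroup $H^\h,H^\ell,H^{\iu},H^W$; so the paper does supply that argument, just outside the formal proof.
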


 \begin{proof}
  To compute the tangent spaces, it can be useful to recall that $G_2$ is compact, so that the exponential is surjective and $G_2=\exp(\g_2)$. If $V$ belongs to a submanifold $\N$ of $\M_S$,
 then $T_V\N=\{d\in T_V\M_S=\m_4^V:\exp(td)(V)\in\N\ \forall t\in\R\}$. 
 
\boxed{\textrm{Case }(1)} 
First observe that Proposition~\ref{pr_lasadaptadas} allows us to write the first manifold as
$$
\N_1^\h=\{F(V_0)\colon F\in H^\h\},
$$
for $V_0\in\M_S$ a fixed associative subalgebra to which $\h$ is adapted. For any $V\in\N_1^\h$,
similarly we   have $\N_1^\h=\{F(V)\colon F\in H^\h\}$, and 
 we want to prove that
$T_V\N_1^\h=\h \cap  \m_4^V$ (two-dimensional, just because $\h$ is adapted to $V$).
First, for any $d\in\h$, and any $t\in\R$, we have $[td,\h]\subset\h$ since $\h$ is subalgebra. 
Thus $\hbox{Ad}(\exp(td))(\h)=\exp(\hbox{ad}(td))(\h)\subset \h$, that implies $\exp(td)\in H^\h$ 
so that  $\exp(td)(V)\in \N_1^\h$, as required.
To prove the converse, take $d$ an odd derivation with $\exp(td)(V)\in \N_1^\h$. Then there is $F_t\in H^\h$ such that
$\exp(td)(V)=F_t(V)$, with $F_0=\id_{\R^7}$. This gives a curve  $F_t^{-1}\exp(t d)$ in $H^V$ ($\cong\SO(V^\bot,\esc{\cdot,\cdot })$) since the automorphism fixes $V$ and it is an isometry. Deriving at $t=0$ we get that $\frac{d}{dt}F_t^{-1}\vert_{t=0} +d\in\h_4^V$, since $\h_4^V$ is the Lie algebra of $H^V$. Its projection on the odd part of the $\Z_2$-grading $\g_2=\h_4^V\oplus\m_4^V$ is zero. 
That is, for $p_V$ the projection as in Lemma~\ref{le_utiladaptada}, $d=-p_V\big(\frac{d}{dt}F_t^{-1}\vert_{t=0} \big)\in p_V(\h)\subset \h$ since $\h$ is adapted to $V$ (and $F_t\in H^\h$).

 \boxed{\textrm{Case }(2)} 
  Take $V\in\M_S$ and $\ell\in V^\bot$.
 Let us prove that $T_V\N_2^\ell=\h_2^\ell\cap  \m_4^V=\{d\in\g_2:d(V)\subset V^\bot, d(V^\bot)\subset V, d(\ell)=0\}$.
 This is equivalent to show that, given $d$ an odd derivation,   $d(\ell)=0$ if and only if $\exp(td)(V)\in\N_2^\ell$ for all $t$, that is, if and only if
 $\ell\in \exp(td)(V)^\bot$ for all $t$. One of the implications is clear: if $d(\ell)=0$, then $\exp(td)(\ell)=\sum_{n=0}^\infty\frac{t^n}{n!}d^n(\ell)=\ell$ belongs to $
 \exp(td)(V^\bot)= \exp(td)(V)^\bot$. Conversely, if $\ell\in \exp(td)(V ^\bot)$ for all $t$, then $d(\ell)\in V^\bot$, but $\ell\in V^\bot$ and $d$ is odd, so that $d(\ell)\in V$ too, what gives $d(\ell)=0$.

 \boxed{\textrm{Case }(3)} 
 This works similarly to the above case. Now  $V\in\M_S$ and ${\iu}\in V$. We want to check that 
 $T_V\N_3^{\iu}=\h_2^{\iu}\cap  \m_4^V=\{d\in\g_2:d(V)\subset V^\bot, d(V^\bot)\subset V, d({\iu})=0\}$.
 (Observe that the appearance is the same, but it is not the same set,   not even of the same dimension.)
 Now we have to prove that  $d({\iu})=0$, for $d$ an odd derivation, if and only if ${\iu}\in \exp(td)(V) $ for all $t$.
 Again, if $d({\iu})=0$, then ${\iu}=\exp(td)({\iu})$, now belonging to $\exp(td)(V) $.
 Conversely, on one hand, if ${\iu}\in \exp(td)(V  )$ for all $t$, then $d({\iu})\in V $, but on the other hand, ${\iu}\in V$ and $d$ is odd, so that
 $d({\iu})\in V^\bot$ and $d({\iu})=0$ follows.
 
 \boxed{\textrm{Case }(4)}  
 For this case, note that, if $V,W\in\M_S$, then 
 \begin{center}$W\cap V\ne0$ and $ W^\bot\cap V\ne0$ if and only if 
 $V=(W\cap V)\oplus (W^\bot\cap V) $.   \end{center}
 In fact, consider $\iu$, $\ell$ and $\ju$ as in Eqs.~\eqref{eq_parademo1} and \eqref{eq_parademo2}.
 Moreover, $V\in \N_4^W$ if and only if $W\in\N_4^V$.

 Let us prove that, for $d$ an odd derivation (odd with respect to a fixed $V\in \N_4^W$), then
 $\exp(td)(V)\in \N_4^W$ for all $t$ if and only if $d\in\h_4^W \cap  \m_4^V$, that is, if $d(W)\subset W$  ($d$ is even with respect to $W$).
 One of the implications is clear: if $d(W)\subset W$, of course  $\exp(td)(W)\subset W$, and $\exp(td)(W^\bot)\subset W^\bot$.
 As $0\ne W\cap V$, and taking into account that $\exp(td)$ is an automorphism, 
 also $0\ne \exp(td)(W\cap V)=\exp(td)(W)\cap \exp(td)(V)=W\cap \exp(td)(V)$. Similarly,
 $0\ne \exp(td)(W^\bot\cap V)=\exp(td)(W^\bot)\cap \exp(td)(V)=W^\bot\cap \exp(td)(V)$, so that $\exp(td)(V)\in \N_4^W$.

In order to prove the converse, take, for each $V\in\M_S$, the order two automorphism 
$
\theta_V\in G_2
$
in Eq.~\eqref{eq_order2auto}, given by $\theta_V\vert_V=\id$ and $\theta_V\vert_{V^\perp}=-\id.$
Observe that we can write
\begin{equation}\label{pasapa}
\N_4^W=\{V\in\M_S:\theta_W(V)\subset V\}\setminus\{W\}=\{V\in\M_S :\theta_W\theta_V=\theta_V\theta_W\}\setminus\{W\}.
\end{equation}
Indeed, for any $V\in\N_4^W$, take $\iu$, $\ell$ and $\ju$ as in Eqs.~\eqref{eq_parademo1} and \eqref{eq_parademo2} and we know  that $\theta_W$ leaves $V$ invariant ($\theta_W(\iu)=\iu$, $\theta_W(\ju)=-\ju$ and $\theta_W(\iu\t\ju)=-\iu\t\ju$). The converse is clear from the fact that the projections on $W$ and on $W^\bot$ are $\frac12(I\pm\theta_W)$. The second equality holds since two commuting automorphisms diagonalize simultaneously.

Now assume $\exp(td)(V)\in \N_4^W$ for all $t$.
A straightforward computation gives $\theta_{\exp(td)(V)}=\hbox{Ad}(\exp(td))(\theta_V)$, so that
$$\exp(td) \theta_V\exp(-td) \theta_W= \theta_W\exp(td) \theta_V\exp(-td).
$$
 Making $\frac{d}{dt}\vert_{t=0}$, we get
$d\theta_V\theta_W-\theta_Vd\theta_W=\theta_Wd\theta_V-\theta_W\theta_Vd$. As $d$ is an odd derivation,  
$d$ anticommutes with $\theta_V$, but taking into account that $\theta_V$ commutes with $\theta_W$,   we can write $2d\theta_W\theta_V=2\theta_Wd\theta_V$. Being $\theta_V$ bijective, this implies that $d$ commutes with $\theta_W$ and hence that $d(W)\subset W$.
\end{proof}

\begin{remark}\label{re_loscocientesquesalen}
{\rm
Observe that the submanifolds above can be seen quite naturally as homogeneous spaces 
   $$
   \begin{array}{cc}
\quad\N_1^\h\cong\SO(3)/\SO(2),  \quad
&\quad\N_2^{u}\cong\SU(3)/\SO(3),  \quad \\
\quad\N_3^{u}\cong\SU(3)/\textrm{U}(2),  \quad
&\quad\N_4^W\cong \SO(4)/\SO(2)\times\SO(2).\quad
\end{array}
$$ 
In particular, $\N_1^\h\cong\mathbb S^2$,  $\N_2^{u}$ is a Wu manifold \cite{Wumf},  $\N_3^{u}\cong\mathbb C P^2$ and 
$\N_4^W\cong(\mathbb S^2\times \mathbb S^2)/\mathbb Z_2$.  
Indeed, taking in mind Proposition~\ref{pr_tgm_casoG2}, 
\begin{itemize}
\item
$H^\h\cong\SO(3)$ acts (transitively) on $\N_1^\h$ and the isotropy subgroup of $V\in\N_1^\h$ is $H^V\cap H^\h
=\{F\in G_2:F(V^\bot)\subset V^\bot,\hbox{\textrm{Ad}}(F)(\h)=\h\} \cong\SO(\h\cap\m_4^V,\kappa)\cong\SO(2)$, where the isomorphism is given by
$F\mapsto \hbox{Ad}(F)\vert_{\h\cap\m_4^V}$.
\item
For $\ell\in V^\bot$, $H^\ell\cong\SU(3)$ acts (transitively) on $\N_2^\ell$ and the isotropy subgroup of $V\in\N_2^\ell$ is $H^V\cap H^\ell=\{F\in G_2:F(V^\bot)\subset V^\bot,F(\ell)=\ell\}$, which is naturally isomorphic to $\SO(V')\cong\SO(3)$ for the three-dimensional space $V'=\{v\t\ell:v\in V\}\le V^\bot$ by the restriction map $F\mapsto F\vert_{V'}$.
\item Again $H^{\iu}\cong\SU(3)$ acts (transitively) on $\N_3^{\iu}$ and the isotropy subgroup of $V\in\N_3^{\iu}$ is $H^V\cap H^{\iu}=\{F\in G_2:F(V^\bot)\subset V^\bot,F({\iu})={\iu}\}$. But now ${\iu}\in V$, so that $ H^V\cap H^{\iu}\cong\textrm{U}(V^\bot,\sigma\vert_{V^\bot})\cong\textrm{U}(2)$ for $\sigma$ the Hermitian form 
$\sigma(x,y)=\esc{x,y}-\esc{{\iu}x,y}{\iu}\in\C_{\iu}$
as in the proof of Proposition~\ref{pr_LTS_G2}. Note $V^\bot\cong (\C_{\iu})^2.$
\item
Finally, $H^W\cong\SO(4)$ acts (transitively) on $\N_4^W$ and the isotropy subgroup of $V\in\N_4^W$ is $H^V\cap H^W=\{F\in G_2:F\theta_V=\theta_V F,F\theta_W=\theta_W F\}\cong \SO(V\cap W^\bot)\t\SO(V^\bot\cap W)\cong \SO(2)\t\SO(2)$, with the isomorphism given by the two restrictions. (Note that $V\cap W^\bot$, $V^\bot\cap W^\bot$ and $V^\bot\cap W $ are the three non-trivial homogeneous components of the $\Z_2^2$-grading on $\R^7$ produced by the pair of commuting order 2 automorphisms $\theta_V$ and $\theta_W$, and the product of  any two of them is the third one. 
Also the square of any of the three components fills the one-dimensional subspace $V\cap W$.
In particular the pair $(F\vert_{V\cap W^\bot},F\vert_{V^\bot\cap W})$, and in general any of the three pairs of possible restrictions, determines $F$.)
\end{itemize}
}
\end{remark}

   \begin{remark}    
   {\rm
  So far, there was not a big difference between using $\R^7$ or using $\O$. But, with the alternative description of $\M_S$ as the quaternion subalgebras of $\O$; it turns out that the submanifold of the fourth type does not consist of those quaternion subalgebras   intersecting both $Q$ and $Q^\bot$, for $Q$ a fixed quaternion subalgebra. 
We would have had to ask for some requirement on the dimension of the intersections, which would make this description less geometrical.  
}
\end{remark}


\section{Maximal totally geodesic submanifolds of $G_2/\SO(4)$ (octonion-free version)}  \label{se_stg_MS_free}

For the sake of completeness, we would like to include a version of our results independent from $\g_2$, or at least where the 
involved Lie triple systems are described in matricial terms. Our interest lies in making the description of the Lie triple systems 
 as accessible as possible,  
 without the need to use derivations (derivations involve quadratic equations). Thus, 
 this section can be read independently of the previous ones,  
 although in this context it would be more difficult to independently prove that we are dealing with all maximal LTS.
 

\subsection{Lie triple system related to $\M'_S$}

Recall we have defined the Grassmanian as the manifold
\begin{equation*}  
\grass\nolimits_3(\R^7)=\{\pi\in \hbox{End}_{\R}(\R^7)\colon \pi^2=\pi, \pi^\sharp=\pi, \tr(\pi)=3\}.
\end{equation*}
This makes possible to realize the computations of the tangent spaces by using dual numbers.
Define the Grassmanian as a functor mapping each real commutative associative and unital algebra $A$ to the set $\grass_3(A^7)$ of all $\pi\in\hbox{End}_A(A^7)$ such
that $\pi^2=\pi$, $\pi^\sharp=\pi$ and $\tr(\pi)=3$. So taking $A=\R$ we recover the
original Grassmanian. But now we can take $A$ to be any real algebra with the mentioned specifications (commutative associative and unital). So  we can put $A=\R(\epsilon)$, the algebra of dual numbers over $\R$. Recall that $A=\R 1\oplus\R\epsilon$ with $\epsilon^2=0$.
Any element in $\grass_3(\R(\epsilon)^7)$ is of the form $a+\epsilon b$ where 
$a,b\in\hbox{End}_{\R}(\R^7)$.
Then if we take $\pi\in\grass_3(\R^7)$ and  $d\in\hbox{End}_{\R}(\R^7)$ we may 
try to see under what conditions $\pi+\epsilon d$ is in $\grass_3(\R(\epsilon)^7)$.
In other words we can define 
$$T_\pi(\grass\nolimits_3(\R^7)):=\{d\in\hbox{End}_{\R}(\R^7)\colon \pi+\epsilon d\in\grass\nolimits_3(\R(\epsilon)^7)\}.$$
This means 
$(\pi+\epsilon d)^2=\pi+\epsilon d$; $(\pi+\epsilon d)^\sharp=\pi+\epsilon d$ and $\tr(\pi+\epsilon d)=3.$
The first assertion is $\pi+\epsilon d=(\pi+\epsilon d)^2=\pi^2+\epsilon(d\pi+\pi d)$
and since $\pi^2=\pi$ we get $d\pi+\pi d=d$. The second equality is 
$\pi+\epsilon d=\pi^\sharp+\epsilon d^\sharp$ and since $\pi^\sharp=\pi$ we get $d=d^\sharp$. Finally $3=\tr(\pi+\epsilon d)=\tr(\pi)+\epsilon\tr(d)$ from which we deduce that $\tr(d)=0$. Then we have checked that 
$$
T_\pi(\grass\nolimits_3(\R^7))=\{d\in\hbox{End}_{\R}(\R^7)\colon d\pi+\pi d=d, d^\sharp=d, \tr(d)=0\}.
$$
Notice that each $\pi\in\grass_3(\R^7)$  induces a $\Z_2$-grading on the vector space $\R^7=V_0\oplus V_1$  being $V_{\lambda}=\ker(\pi'-\lambda\id)$, with $\dim V_0=3$. 
  As usual $\pi'$ denotes $1-\pi$.
By (vector space) grading we   simply mean a decomposition of $\R^7$ as a direct sum of vector subspaces (ortogonal sum, in this case). 
Now the condition $d\pi+\pi d=d$ is equivalent to the assertion that $d$ is an odd map relative to  such grading:
$d(V_0)\subset V_1$ and $d(V_1)\subset V_0$.  
It is easy to check that $\dim T_\pi(\grass_3(\R^7))=12$ because for any linear map $f\colon V_0\to V_1$, there is an only selfadjoint extension $d\colon \R^7\to\R^7$ with $d\vert_{V_0}=f$ (which is trivially traceless).
In terms of matrices, fix any basis of $\R^7$ with the first 3 vectors in $V_0$ and the remaining 4 vectors in $V_1$, so that the   $7\times 7$ matrix   of $\pi$ is $\tiny\begin{pmatrix}I_3 & 0_{34}\\0_{43} & 0_{44}\end{pmatrix}$ where $I_3$ is the identity matrix $3\times 3$ and $0_{ij}$ is a null $i\times j$ block. Then identifying $d$ with a block matrix and imposing the conditions $d\pi+\pi d=d$ and $d^\sharp =d$ (the condition $\tr(d)=0$ is redundant), we see that the matrix of $d$ becomes  $\tiny\begin{pmatrix}0 & b\\b^t &0\end{pmatrix}$ for $b$  any $3\times 4$ block. 

Now $T_\pi(\grass\nolimits_3(\R^7))$ should be a Lie triple system, since $\grass\nolimits_n(\R^7)$ is always a symmetric space. 
Let us check that  the Lie triple system of the tangent space to the Grassmanian $\grass_3(\R^7)$ is the skew-symmetrization of the associative triple system of $\mat_{3,4}(\R)$ (basic concepts on Section~\ref{se_basicosonLTS}). 
\begin{lemma} The tangent space $T_\pi(\grass_3(\R^7))$ is a Lie triple system for the product $[d_1,d_2,d_3]=[[d_1,d_2],d_3]$.
Moreover, $T_\pi(\grass\nolimits_3(\R^7))\cong\mat\nolimits_{3,4}(\R)^-.$
\end{lemma}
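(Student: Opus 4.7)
My plan is to work with the explicit matrix description already set up: fix a basis of $\R^7$ whose first three vectors span $V_0$ and last four span $V_1$, so that a generic $d\in T_\pi(\grass_3(\R^7))$ has block form
$$d=\begin{pmatrix}0 & b\\ b^t & 0\end{pmatrix},\qquad b\in\mat\nolimits_{3,4}(\R).$$
This immediately produces a linear isomorphism $\Phi\colon\mat_{3,4}(\R)\to T_\pi(\grass_3(\R^7))$ sending $b$ to the above block matrix. The whole lemma then reduces to comparing two trilinear operations through $\Phi$.

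The LTS structure on $T_\pi(\grass_3(\R^7))$ would be inherited from a larger ambient LTS: namely, $\mathfrak{gl}(\R^7)$ is automatically a Lie triple system for $[[d_1,d_2],d_3]$, so it is enough to verify that the subspace of selfadjoint odd endomorphisms is closed under this triple product. This follows from two observations: (a) if $d_1,d_2$ are selfadjoint and odd (i.e.\ anticommute with $2\pi-I$), then $[d_1,d_2]$ is skew-adjoint and even, so $[[d_1,d_2],d_3]$ is again selfadjoint and odd; or, more concretely, (b) the computation I sketch below exhibits the triple product as something of the desired block form, giving closure for free.

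The key computation is then:
\begin{align*}
\Phi(x)\Phi(y)&=\begin{pmatrix}xy^t & 0\\ 0 & x^ty\end{pmatrix},\\
[\Phi(x),\Phi(y)]&=\begin{pmatrix}xy^t-yx^t & 0\\ 0 & x^ty-y^tx\end{pmatrix},\\
[[\Phi(x),\Phi(y)],\Phi(z)]&=\begin{pmatrix}0 & (xy^t-yx^t)z-z(x^ty-y^tx)\\ \ast & 0\end{pmatrix},
\end{align*}
and the upper-right block is exactly $xy^tz-yx^tz-zx^ty+zy^tx$, which is the skew-symmetrization $[x,y,z]$ of the associative triple $\langle xyz\rangle=xy^tz$ on $\mat_{3,4}(\R)$. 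Thus $[[\Phi(x),\Phi(y)],\Phi(z)]=\Phi([x,y,z])$, and the lower-left block is automatically the transpose of the upper-right one, confirming the answer stays in $T_\pi(\grass_3(\R^7))$.

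The main obstacle is purely bookkeeping: ensuring the sign conventions match the skew-symmetrization formula recalled in Section~\ref{se_basicosonLTS} and double-checking the transpose identities $(xy^t)^t=yx^t$ etc. With this computation the lemma is complete: $T_\pi(\grass_3(\R^7))$ is a Lie triple subsystem of $\mathfrak{gl}(\R^7)$ (hence a Lie triple system), and $\Phi$ is a LTS isomorphism onto $\mat_{3,4}(\R)^-$. A side benefit worth recording is that, since $\mat_{3,4}(\R)^-$ is simple with standard envelope $\mathfrak{so}(7)$ (as mentioned earlier when discussing Lister), we recover the standard identification $T_\pi\grass_3(\R^7)\cong T_\pi\grass_4(\R^7)$ and the fact that the envelope of this triple is $\mathfrak{so}(7)$.
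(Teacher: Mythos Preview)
Your proposal is correct and follows essentially the same approach as the paper: both arguments use the block form $d=\begin{pmatrix}0&b\\ b^t&0\end{pmatrix}$ and arrive at the identical computation of the upper-right block of $[[\Phi(x),\Phi(y)],\Phi(z)]$, matching the skew-symmetrization formula. The only minor difference is in how closure is justified: the paper checks the three defining conditions $(\tr=0,\ d^\sharp=d,\ d\pi+\pi d=d)$ one by one (the last via the explicit identity $d_1d_2d_3\pi+\pi d_1d_2d_3=d_1d_2d_3$), whereas your parity argument (a)---that $[d_1,d_2]$ is skew-adjoint and even, hence $[[d_1,d_2],d_3]$ is selfadjoint and odd---is a slightly slicker way to reach the same conclusion.
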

\begin{proof} The condition $\tr([d_1,d_2,d_3])=0$ is trivially satisfied and since 
$[d_1,d_2]^\sharp=[d_2^\sharp,d_1^\sharp]=[d_2,d_1]$, then taking into account the anti-commutativity of the Lie bracket, we have $[d_1,d_2,d_3]^\sharp=[d_1,d_2,d_3]$. Now, for any $d_i\in T_{\pi}(\grass_3(\R^7))$ we have 
$$d_1d_2d_3\pi=d_1d_2(d_3-\pi d_3)=d_1d_2d_3-d_1d_2\pi d_3=$$
$$=d_1d_2d_3-d_1(d_2-\pi d_2)d_3=d_1\pi d_2d_3=d_1d_2d_3-\pi d_1d_2d_3,$$
whence $d_1d_2d_3\pi+\pi d_1d_2d_3=d_1d_2d_3$ and from this, it is easy to realize that 
$[d_1,d_2,d_3]\pi+\pi[d_1,d_2,d_3]=[d_1,d_2,d_3]$. This could also have been proved taking into account the equivalent condition that $d_i(V_0)\subset V_1$ and $d_i(V_1)\subset V_0$ for $i=1,2,3$.

If we use the identification between $T_\pi(\grass_3(\R^7))$  with $ \mat_{3,4}(\R)$ above proposed,  the induced  triple product  follows from
\begin{equation}\label{precise}
[[\begin{pmatrix}0 & a\\a^t &0\end{pmatrix},\begin{pmatrix}0 & b\\b^t &0\end{pmatrix}],\begin{pmatrix}0 & c\\c^t &0\end{pmatrix}]=\begin{pmatrix}0 & ab^tc-ba^tc+cb^ta-ca^tb\\ * &0\end{pmatrix},\end{equation}
where the asterisk denotes the transposition of the upper right element of the matrix. 
That is, the vector space isomorphism $\tiny\begin{pmatrix}0 & a\\a^t &0\end{pmatrix}\mapsto a$ is really a Lie triple system  isomorphism.
\end{proof}

Now think of the submanifold   
$\M'_S=\{\pi\in \grass\nolimits_3(\R^7)\colon \Omega(\pi\o\pi\o\pi')=0\}$. 
If $\pi\in\M'_S$, the $\Z_2$-grading on the vector space $\R^7$ induced by $\pi$, that is, $V_{\bar i}=V_i=V_{i,\pi'}$, is in fact a grading on the   algebra $(\R^7,\t)$ as in  \eqref{eq_Z2grad},
since $V_{\bar i}\t V_{\bar j}\subset V_{\overline{i+j}}$ for all $\bar i,\bar j\in\Z_2$. 
To obtain the tangent space at
$\pi\in\M'_S$, we use again dual numbers. If $d\in  T_\pi(\grass\nolimits_3(\R^7))$, the condition to belong to $  T_{\pi}(\M'_S)$ is
$\Omega ((\pi+\epsilon d)\otimes(\pi+\epsilon d)\otimes(\pi'-\epsilon d))=0$, 
which gives
$\Omega (d\o\pi\o\pi'+\pi\o d\o\pi'-\pi\o\pi\o d)=0 $.
As $d^\sharp =d$, also $(\pi')^\sharp =\pi'$, and $\esc{\cdot,\cdot }$ is nondegenerate, we get
\begin{equation}\label{idnoc}
\pi'\left[d(x)\t \pi(y)+\pi(x)\t d(y)\right]=d(\pi(x)\times \pi(y)).
\end{equation}
This condition  is trivially satisfied  if $x\in V_\0$, $y\in V_\1$, since can be written as 
$ \pi'\left[x\t d(y)\right]=0$, but $x\times d(y)\in V_\0\times V_\0\subset V_\0$. The same can be said in the cases $x\in V_\1, y\in V_\0$ or $x,y\in V_\1$. So we can assure that 
$$
T_\pi(\M'_S)=\{d\in T_\pi(\grass\nolimits_3(\R^7)): d(x\times y)=d(x)\times y+x\times d(y)\ \forall x,y\in V_\0=\hbox{Im}(\pi)\}.
$$

\begin{remark}\label{re_noestamal}
{\rm
Observe that the tangent space to the symmetric space $T_\pi(\M'_S)$ does not coincide with the odd elements in $\g_2=\der(\R^7,\t)$:
The elements in $T_\pi(\grass\nolimits_3(\R^7))$ are self-adjoint, while the elements in $\g_2$ are skew-adjoint.
This may lead to confusion. But observe that both $T_\pi(\M'_S)$
and the set of odd derivations of $(\R^7,\t)$ are isomorphic as Lie triple systems! Given $d\in T_\pi(\M'_S)$, there is a unique $\tilde d\in\m_4^{V_\0}\le\g_2$ such that $\tilde d\vert_{V_\0}=d$ (and of course this natural map $d\mapsto\tilde d$ behaves well with the triple product).
It is even more striking if we think of $\M_S'$ as $\{\pi\in \grass\nolimits_4(\R^7)\colon \Omega(\pi\o\pi\o\pi)=0\}$. Then it is easy to check that the tangent space is $\{d\in T_\pi(\grass\nolimits_4(\R^7)): -d(x\times y)=d(x)\times y+x\times d(y)\ \forall x,y\in  \hbox{Im}(\pi)\}$, 
whose elements do not even have the appearance of derivations.
}\end{remark}

Since $d^\sharp=d$, the restriction
$d\colon V_\0\to V_\1$ contains all the relevant information. 
Fix orthonormal and unitary $\iu,\ju\in V_\0$ and put $\ku=\iu\times \ju$. Then $\{\iu,\ju,\ku\}$ is an orthonormal basis of $V_\0$. Choose now $\ell\in V_\0^\bot$ of norm $1$ and define $v_0:=\ell$, $v_1:=\iu\times\ell$, $v_2:=\ju\times\ell$, $v_3:=\ku\times\ell$. 
\begin{lemma}
    For any two vectors $(a_i), (b_i)\in\R^4$, there is a 
    unique $d\in T_\pi(\M'_S)$ such that $d(\iu)=\sum_{i=0}^3a_iv_i$ and $d(\ju)=\sum_{i=0}^3b_iv_i$. So $\dim(T_\pi(\M'_S))=8$. Furthermore
    $T_\pi(\M'_S)$ is a Lie triple subsystem of $T_\pi(\grass_3(\R^7))$.
\end{lemma}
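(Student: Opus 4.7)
The plan proceeds in three parts. First, for uniqueness, any $d\in T_\pi(\M'_S)$ is self-adjoint and interchanges $V_0=\mathrm{Im}(\pi)$ with $V_1=V_0^\bot$, so it is determined by $d|_{V_0}$; and the defining identity of $T_\pi(\M'_S)$ applied to $(\iu,\ju)$ forces $d(\ku)=d(\iu)\t\ju+\iu\t d(\ju)$, so the pair $(d(\iu),d(\ju))$ already pins down $d|_{V_0}$. For existence, given $(a_i),(b_i)\in\R^4$ I would set $d(\iu):=\sum a_iv_i$, $d(\ju):=\sum b_iv_i$, $d(\ku):=d(\iu)\t\ju+\iu\t d(\ju)$, extend linearly to $V_0$, and extend to $V_1$ by the requirement $d=d^\sharp$. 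Then $d\in T_\pi(\grass_3(\R^7))$ automatically, so the only nontrivial point is to check the derivation identity $d(x\t y)=d(x)\t y+x\t d(y)$ on $V_0\t V_0$.

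This identity is alternating and bilinear with $x\t x=0$, so only the three pairs $(\iu,\ju),(\ju,\ku),(\ku,\iu)$ need to be verified. The first is precisely the definition of $d(\ku)$. For the other two I would substitute the formula for $d(\ku)$ and iterate the Malcev-type identity~\eqref{props_cross}, using that $\iu,\ju$ are orthonormal and that $d(\iu),d(\ju)\in V_1$ are orthogonal to every vector of $V_0$; all inner-product terms on the right-hand side of~\eqref{props_cross} then collapse and the desired equalities fall out. The $8=4+4$ free parameters give $\dim T_\pi(\M'_S)=8$ immediately.

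For the Lie triple subsystem property, I would transfer the question to $\g_2$. Setting $T:=\theta_{V_0}\in G_2$ (with $T|_{V_0}=\mathrm{id}$, $T|_{V_1}=-\mathrm{id}$), one verifies that the map $\tilde d\mapsto \tilde dT$ sends $\m_4^{V_0}\subset\g_2$ into $T_\pi(\M'_S)$ and is injective; combined with $\dim\m_4^{V_0}=8=\dim T_\pi(\M'_S)$, this gives the bijection of Remark~\ref{re_noestamal}, with inverse $d\mapsto\tilde d:=dT$. Since $d$ is odd one has $Td=-dT$, so a direct computation yields $\tilde d_1\tilde d_2=-d_1d_2$ and therefore
$$
[[\tilde d_1,\tilde d_2],\tilde d_3]=-[[d_1,d_2],d_3]\,T.
$$
Because $\m_4^{V_0}$ is the odd part of a $\Z_2$-grading on $\g_2$, it is a Lie triple subsystem of $\g_2$ by Lemma~\ref{lema2}; hence the left-hand side lies in $\m_4^{V_0}$, which forces $[[d_1,d_2],d_3]\in T_\pi(\M'_S)$. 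The main obstacle is the bookkeeping in the existence step, where the two cyclic derivation identities require several careful applications of~\eqref{props_cross} with attention to signs; everything else is either a dimension count or a formal consequence of the $\g_2$-identification.
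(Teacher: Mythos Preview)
Your argument is correct. The existence/uniqueness part is essentially the paper's approach: both define $d(\ku)=d(\iu)\times\ju+\iu\times d(\ju)$ and then verify the two remaining derivation identities on $(\ju,\ku)$ and $(\ku,\iu)$; the only difference is that the paper writes down the explicit multiplication table of $\iu,\ju,\ku$ against $v_0,\dots,v_3$ and checks numerically, while you propose to push everything through the identity~\eqref{props_cross} using orthogonality. Either works.

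For the Lie triple subsystem property, your route is genuinely different from the paper's. The paper simply identifies $T_\pi(\grass_3(\R^7))\cong\mat_{3,4}(\R)^-$ with triple product $[a,b,c]=ab^tc-ba^tc+cb^ta-ca^tb$ and asks the reader to verify closure on the explicit eight-element basis of $T_\pi(\M'_S)$ obtained from~\eqref{ortaucropsert}. You instead exploit the bijection $\m_4^{V_0}\to T_\pi(\M'_S)$, $\tilde d\mapsto \tilde d\,\theta_{V_0}$, and the computation $[[\tilde d_1,\tilde d_2],\tilde d_3]=-[[d_1,d_2],d_3]\,\theta_{V_0}$ to transport the closure from $\m_4^{V_0}$. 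This is more conceptual: it avoids any basis check, it is exactly the content of Remark~\ref{re_noestamal}, and the minus sign you uncover shows at the same time that $T_\pi(\M'_S)$ and $\m_4^{V_0}$ are \emph{twin} Lie triple systems rather than isomorphic on the nose. One small remark: the fact you need about $\m_4^{V_0}$ is not Lemma~\ref{lema2} (which concerns maximality) but the more elementary observation that the odd part of any $\Z_2$-grading on a Lie algebra is closed under $[[\cdot,\cdot],\cdot]$.
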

\begin{proof}
    Define the linear map $d\colon V_\0\to V_\1$ such that 
    $d(\iu)=\sum_ia_iv_i$, $d(\ju)=\sum_ib_iv_i$ and $d(\ku)=\iu\times d(\ju)+d(\iu)\times\ju$. 
     To get the explicit   coordinates of $d(\ku)$, take into account the following rules from \eqref{props_cross}:
    $$
    \begin{tabular}{|c|c|c|c|c|}
    \hline 
    $\times$ & $v_0$ & $v_1$ & $v_2$ & $v_3$\\
    \hline 
    $\iu$ & $v_1$ & $-v_0$ & $-v_3$ & $v_2$\\
    $\ju$ & $v_2$ & $v_3$ & $-v_0$ & $-v_1$\\
    $\ku$ & $v_3$ & $-v_2$ & $v_1$ & $-v_0$\\
    \hline
    \end{tabular}
    $$
     Thus we get $$d(\ku)=(a_2-b_1)v_0+
    (a_3+b_0)v_1-(a_0-b_3)v_2-(a_1+b_2)v_3,$$
    which allows us to directly check that  $d(\iu\times \ku)=\iu\times d(\ku)+d(\iu)\times\ku$ and 
    $d(\ju\times \ku)=\ju\times d(\ku)+d(\ju)\times\ku$, so that $d\in T_\pi(\M'_S)$.  In particular, the (row) matrix of a generic element of $T_\pi(\M'_S)$ relative to the bases
    $\{\iu,\ju,\ku\}$ of $V_\0$ and $\{v_0,v_1,v_2,v_3\}$ of $V_\1$ is 
    \begin{equation}\label{ortaucropsert}
        {\tiny\begin{pmatrix} a_0 & a_1 & a_2 & a_3\\
b_0 & b_1 & b_2 & b_3\\
a_2-b_1 & a_3+b_0 & b_3-a_0 & -a_1-b_2\end{pmatrix}.}
    \end{equation}
    
    To check that $T_\pi(\M'_S)$ is a subtriple  of   $T_\pi(\grass_3(\R^7))\cong\mat\nolimits_{3,4}(\R)^-$, recall that the Lie triple product  is $[a,b,c]=ab^tc-ba^tc+cb^ta-ca^tb$, as given in \eqref{precise}. The reader can get convinced of that the triple product is closed by testing directly the fact  on a basis, for instance on $\{e_{11}-e_{33},e_{12}-e_{34},e_{13}+e_{31},e_{14}+e_{32},e_{21}+e_{32},e_{22}-e_{31},e_{23}-e_{34},e_{24}+e_{33}\}$.
    Here $e_{ij}$ denotes the $3\t4$ block matrix with the only nonzero entry, the $(i,j)$th, equal to 1.
    \end{proof}

If now we take $d\colon V_\0\to V_\1$ as before satisfying
$d(x\times y)=x\times d(y)+d(x)\times y$, then the endomorphism 
$\hat d\colon V_\0\to V_\0$ given by $\hat d(x)=\ell\times d(x)$ has (relative to $\{\iu,\ju,\ku\}$) the matrix
\begin{equation}\label{osopardo}
{\tiny\begin{pmatrix}a_1 & a_2 & a_3\\ b_1 & b_2 & b_3\\ a_3+b_0 & b_3-a_0 & -a_1-b_2\end{pmatrix}},
\end{equation}
recalling that $\ell\t(v\t\ell)=v$ for all $v\in V_\0$.
This is a generic element of $\sl_3(\R)$ so there is an isomorphism of Lie triple systems $T_\pi(\M'_S)\cong\sl(V_\0)$ such that $d\mapsto\hat d$, that in matricial terms is no more than \lq\lq forgetting the first column\rq\rq:
\begin{equation}\label{truco}
{\tiny\begin{pmatrix} a_0 & a_1 & a_2 & a_3\\
b_0 & b_1 & b_2 & b_3\\
a_2-b_1 & a_3+b_0 & b_3-a_0 & -a_1-b_2\end{pmatrix}\mapsto \begin{pmatrix}a_1 & a_2 & a_3\\ b_1 & b_2 & b_3\\ a_3+b_0 & b_3-a_0 & -a_1-b_2\end{pmatrix}}.
\end{equation}
We have to consider in $\sl_3(\R)$ the unique triple product 
making of $d\mapsto\hat d$ a Lie triple system isomorphism.
The linear maps $L^+\colon V_\0\to V_\1$ and $L^-\colon V_\1\to V_\0$ such that $L^+(u_0)=\ell\times u_0$ and
$L^-(u_1)=\ell\times u_1$ satisfy $L^-L^+=-1\colon V_\0\to V_\0$. The inverse of the above  bijective linear map is given by $   \sl(V_\0)\to T_\pi(\M'_S)$ such that $ f\mapsto -L^+f$.
So the triple product $\{\cdot,\cdot,\cdot\}$ in $\sl_3(\R)$ given by
$L^+\{f_1,f_2,f_3\}=[L^+f_1,L^+f_2,L^+f_3]$ 
 endows $\sl_3(\R)$ with a LTS structure isomorphic to the one on $T_\pi(\M'_S)$. 
 Consequently 
$$\{f_1,f_2,f_3\}=-L^-[L^+f_1,L^+f_2,L^+f_3].$$
  If we construct the map $\a\colon\sl_3(\R)\to (\R^3)^t$ such that 
  $
  \tiny\a\left[(a_{ij})\right]:=\begin{pmatrix}a_{23}-a_{32}\\ a_{31}-a_{13}\\a_{12}-a_{21}\end{pmatrix},
$ then
then $L^+ m=-(\a(m)\vert m)$ and $L^-(\a(m)\vert m)=m$, for all $m\in\sl_3(\R)$, using block matrices.   Thus 
$$
\begin{array}{l} 
-(L^+m_1)(L^+m_2^t)(L^+m_3)=(\a(m_1)\vert m_1)\begin{pmatrix}\a(m_2)^t\\m_2^t\end{pmatrix}(\a(m_3)\vert m_3)\\
\qquad =(\a(m_1)\a(m_2)^t\a(m_3)+m_1m_2^t \a(m_3)\vert \a(m_1)\a(m_2)^t m_3+m_1m_2^tm_3);
\end{array}
$$
and
 $-L^-  ((L^+m_1)(L^+m_2^t)(L^+m_3)) =\a(m_1)\a(m_2)^t m_3+m_1m_2^tm_3$. Summing and interchanging roles among $m_i$'s,
 we get 
\begin{equation}\label{excplts}
\{m_1,m_2,m_3\}=[m_1,m_2,m_3]+\gamma(m_1,m_2,m_3)\end{equation}
where 
\begin{align}\label{doslts}
    \begin{split}
    [m_1,m_2,m_3]= & m_1m_2^tm_3-m_2m_1^tm_3+m_3m_2^tm_1-m_3m_1^tm_2,\\
    \gamma(m_1,m_2,m_3)= &[\a(m_1)\a(m_2)^t-\a(m_2)\a(m_1)^t]m_3+\a(m_3)\a(m_2)^tm_1-\\
    & \a(m_3)\a(m_1)^tm_2.
    \end{split}
\end{align}

Notice that this reveals two of the LTS structures on $\sl_3(\R)$. One is the given by the triple product $[ \cdot ,\cdot ,\cdot ]$ above \eqref{doslts},   and another one, $\{\cdot ,\cdot ,\cdot \}$ \eqref{excplts}, corresponds to the odd part of $\g_2$ with its $\Z_2$-grading. 
Summarizing we have
\begin{proposition}\label{pr_LTSconenvolventeg2}
The tangent space $T_\pi(\M'_S)$ is isomorphic to $\sl_3(\R)$ provided with its LTS structure \eqref{excplts}.   
Thus, 
$(\sl_3(\R),\{\cdot ,\cdot ,\cdot \} )$  is the exceptional LTS whose standard envelope is $\g_2$.
\end{proposition}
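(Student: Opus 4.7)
The proposition has essentially been prepared by the preceding discussion, so my plan is to assemble the three strands already built.

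First, I would establish the vector-space isomorphism. The map $d\mapsto \hat d$, where $\hat d(x):=\ell\times d(x)$ for $x\in V_\0$, was shown to send $T_\pi(\M'_S)$ bijectively onto $\sl_3(\R)$ (with the matrix of a generic element in \eqref{osopardo} being a generic traceless $3\times 3$ matrix). Equivalently, in coordinates, the correspondence is the ``forget-the-first-column'' map \eqref{truco}. Since $T_\pi(\M'_S)$ is a Lie triple subsystem of $T_\pi(\grass_3(\R^7))\cong\mat_{3,4}(\R)^-$, I transport its triple product to $\sl_3(\R)$ along this bijection. Concretely, writing $L^+\colon V_\0\to V_\1$, $L^+(u):=\ell\times u$, and $L^-$ its analogue $V_\1\to V_\0$, the identity $L^-L^+=-\mathrm{id}_{V_\0}$ means the inverse of $d\mapsto\hat d$ is $f\mapsto -L^+f$, and so defining $\{f_1,f_2,f_3\}:=-L^-[L^+f_1,L^+f_2,L^+f_3]$ makes the isomorphism tautological.

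Second, I would verify that this transported product is exactly \eqref{excplts}. Using block matrices (with $V_\0=\R\iu\oplus\R\ju\oplus\R\ku$ and $V_\1=\R\ell\oplus\R(\iu\t\ell)\oplus\R(\ju\t\ell)\oplus\R(\ku\t\ell)$ as in \eqref{ortaucropsert}), one has $L^+m=-(\a(m)\mid m)$ where $\a$ is the skew-to-axial-vector map. Then computing $(L^+m_1)(L^+m_2)^t(L^+m_3)$ block-wise splits into the piece $m_1m_2^tm_3$ and a correction involving $\a$, and applying $-L^-$ keeps only the bottom three rows. Skew-symmetrizing via the formula $[a,b,c]=ab^tc-ba^tc+cb^ta-ca^tb$ (from \eqref{precise}) and collecting the two types of contributions gives precisely the decomposition \eqref{doslts} into $[m_1,m_2,m_3]+\gamma(m_1,m_2,m_3)$. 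This is the computational heart of the argument, but all the necessary pieces are already in place; the task is simply to be careful with signs and with the interchange $m_i\leftrightarrow m_j$ in the symmetrization.

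Third, for the claim that the standard envelope of $(\sl_3(\R),\{\cdot,\cdot,\cdot\})$ is $\g_2$, I would appeal to Remark~\ref{re_noestamal}. It exhibits a natural LTS isomorphism between $T_\pi(\M'_S)$ and $\m_4^{V_\0}$, sending each self-adjoint $d$ to its unique skew-adjoint extension $\tilde d$ satisfying $\tilde d|_{V_\0}=d$. But $\m_4^{V_\0}$ is the odd part of the $\mathbb Z_2$-grading on $\g_2$ produced by $V_\0\in\M_S$, and $\g_2$ is generated as a Lie algebra by $\m_4^{V_\0}$ (its $\mathbb Z_2$-graded envelope); equivalently, $[\m_4^{V_\0},\m_4^{V_\0}]=\h_4^{V_\0}$ by simplicity of $\g_2$. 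Hence $\g(T_\pi(\M'_S))=\g_2$, and transporting through $d\mapsto\hat d$ yields $\g(\sl_3(\R),\{\cdot,\cdot,\cdot\})=\g_2$.

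The main obstacle is bookkeeping: checking that the correction term $\gamma$ arising from the $\a$-components reassembles exactly as written in \eqref{doslts} after full skew-symmetrization. This is a linear-algebra computation with no conceptual obstruction, but it is the only place where work remains; everything else is a matter of invoking earlier constructions (the bijection $d\mapsto\hat d$, the description of $T_\pi(\grass_3(\R^7))$ as $\mat_{3,4}(\R)^-$, and Remark~\ref{re_noestamal}).
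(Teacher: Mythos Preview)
Your proposal is correct and follows essentially the same approach as the paper: the proposition in the paper is stated with the phrase ``Summarizing we have'', so its proof \emph{is} the preceding discussion, and your three strands (the bijection $d\mapsto\hat d$, the block-matrix computation via $L^+m=-(\a(m)\mid m)$ yielding \eqref{excplts}--\eqref{doslts}, and the envelope identification via Remark~\ref{re_noestamal} together with $\g(\m_4^{V_\0})=\g_2$) recapitulate that discussion accurately.
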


 Now we describe the maximal Lie triple subsystems  in these matricial terms, jointly with their related maximal totally geodesic submanifolds.
 

\subsection{The $2$-dimensional LTS}\label{se_2dimLTS_v2}

First, we adapt the concepts in Section~\ref{se_adapted} to the setting of projections and Grassmannians. 
Thus, if $\h$ is a principal subalgebra of $\g_2$,    and $\pi\in \M'_S$, we say that $\h$ is \emph{adapted to $\pi$}
if $\h$ is adapted to $V_\0=\hbox{Fix}(\pi)$, that is, if  $\h$ is graded relative to the $\Z_2$-grading produced by $\pi$:
any $d\in\h$ can be written as $d=d_\0+d_\1$, with $d_i\in\h $  
 and $ d_i(V_j) \subset V_{i+j}$ for any $i,j=\bar0,\bar1$. 
Recall that, fixed a principal $\h$, we can always find $\pi_0\in\M'_S$ adapted to $\h$. 
This allows us to define, for $\pi_0$ adapted to $\h$,    
$$
{\N'_1}^\h:=\{F\pi_0 F^{-1}:F\in H^\h\}.
$$  
This set does not depend on the chosen $\pi_0$, because Theorem~\ref{pr_tgm_MSv1} and its proof imply that we can write the
manifold without reference to the principal group, as
${\N'_1}^\h= \{\pi\in\M'_S\colon \pi\textrm{ is adapted to }\h\}.$
According to the previous section,
${\N'_1}^\h$   is a totally geodesic submanifold of $\M'_S$ of dimension 2, because
for 
any $\pi\in {\N'_1}^\h$, we have
$T_\pi{\N'_1}^\h=\h\cap T_\pi\M'_S$ (2-dimensional since $\pi$ is adapted to $\h$), maximal Lie triple subsystem with envelope equal to $\h$. 
 Let us describe the LTS in matricial terms as a subtriple of $\sl_3(\R)$.
 For instance, take $\h=\esc{\{h_1,h_2,h_3\}}$ in Eq.~\eqref{eq_unappal}, and compute
$$\begin{array}{c}
\frac1{\sqrt6}h_2: 
\iu\mapsto 2\iu\t\ell, 
\  \ju\mapsto -\ju\t\ell-\sqrt{{\scriptstyle\frac53}}\ell,\  
\ku\mapsto (-\ku-\sqrt{{\scriptstyle\frac53}}\iu)\t\ell,\\
\frac1{\sqrt6}h_3:\iu\mapsto -2 \ell, \  \ju\mapsto (-\ku+\sqrt{{\scriptstyle\frac53}}\iu)\t\ell,\   \ku\mapsto \ju\t\ell-\sqrt{{\scriptstyle\frac53}}\ell.
\end{array}
$$
Thus, the row-matrix descriptions of the elements in $\h\cap T_\pi\M'_s=\esc{h_2,h_3}$ are
\begin{equation}\label{eq_ppalenterminosdematrices}
T_\pi({\N'_1}^\h)=\{
d_{s,t}:={\tiny\begin{pmatrix}  -2 s & 0 & 0 \\
  -\sqrt{\frac{5}{3}} t & s & t \\
   \sqrt{\frac{5}{3}} s & -t & s \\\end{pmatrix}}
\colon s,t\in \R\}\le \sl_3(\R).
\end{equation}
One could have proved directly that this is a Lie triple subsystem, simply by checking that
\begin{equation}\label{eq_triplenesfera} 
\{d_{s_{1},t_{1}},d_{s_{2},t_{2}},d_{s_{3},t_{3}}\}=\frac23(s_1t_2-s_2t_1)d_{t_{3},-s_{3}}.
\end{equation}
(On the other hand, this is an immediate  consequence of Eq.~\eqref{eq_formulitabasica}.)

Finally we provide a description of ${\N'_1}^\h$   that refers as little as possible to $\g_2$. 
(Take into account that $\h$ is adapted to $V$ if $\h$  is a graded subspace relative to the $\Z_2$-grading on $\g_2$ induced by $\theta_V$.)
To be more explicit, the purpose is simply to find a self-contained way of expressing that $\h$  is adapted to $\pi\in\M'_S$,
writing the projections on the components of the $\Z_2$-grading induced by $\pi\in\M'_S$ on the Lie algebra $\mathfrak{gl}(7,\R)$  in terms on the own $\pi$. 
(As usual, the even component $\mathfrak{gl}(7,\R)_{\bar 0}$ consists of the linear maps $d$ such that $d(V_i)\subset V_i$ and the odd component 
$\mathfrak{gl}(7,\R)_{\bar 1}$ consists of the linear maps $d$ such that $d(V_i)\subset V_{i+1}$.)
 Clearly, the projection on the odd part is given by
$$
p_\pi\colon \mathfrak{gl}(7,\R)\to \mathfrak{gl}(7,\R)_{\bar 1},\quad p_\pi(d)=\pi d \pi'+\pi' d \pi.
$$
For any vector subspace $\mathfrak{s}$ of $\mathfrak{gl}(7,\R)$, $\mathfrak{s}$ will be a graded subspace, that is,
$\mathfrak{s}=(\mathfrak{s}\cap \mathfrak{gl}(7,\R)_{\bar 0})\oplus (\mathfrak{s}\cap \mathfrak{gl}(7,\R)_{\bar 1})$, if and only if $p_\pi(\mathfrak{s})\subset \mathfrak{s}$. So the set of  projections adapted to $\h$ can be described as
$$
{\N'_1}^\h=\{\pi\in\M'_S: p_\pi(\h)\subset\h\}.
$$

\begin{remark}
According to Proposition~\ref{pr_LTSconenvolventeg2}, this 2-dimensional manifold is a sphere. 
One can check that it has constant curvature, as expected, simply passing the scalar product in $\mat\nolimits_{3,4}(\R)^-$ to $\sl_3(\R)$
through the isomorphism in \eqref{truco}.     
This gives in $\sl_3(\mathbb R)$ the scalar product (up to constant) 
\begin{equation}\label{eq_metricase3}
 \esc{d,d'}=\alpha(d)^t\alpha(d')+\mathrm{tr}(d(d')^t)
\end{equation}
  for $d,d'\in\sl_3(\mathbb R)$. 
Now $R(d_{s_{1},t_{1}},d_{s_{2},t_{2}},d_{s_{3},t_{3}})= -\frac23(s_1t_2-s_2t_1)d_{t_{3},-s_{3}}$  by \eqref{eq_triplenesfera} and Section~\ref{se_curvatura},
but 
$$
\esc{d_{s_{1 },t_{ 1}},d_{s_{ 3},t_{ 3}}}d_{s_{2 },t_{ 2}}-\esc{d_{s_{2 },t_{ 2}},d_{s_{3 },t_{3 }}}d_{s_{1 },t_{ 1}}=
-\frac{28}{3} (s_1t_2-s_2t_1) d_{t_3,-s_3}.
$$ 
\end{remark}


\subsection{The $5$-dimensional LTS}
For each  unit element $u \in\R^7$,
consider $${\N'_2}^u:=\{\pi\in\M'_S\colon \pi(u)=0\}.$$
For a fixed $\pi\in{\N'_2}^u$, the tangent space $T_\pi({\N'_2}^u)$ is just the space of all 
$d\in T_\pi(\M'_S)$ such that $\pi+\epsilon d\in {\N'_2}^u(\R(\epsilon))$. 
Thinking of the elements of the tangent space as linear maps we have: 
$0=(\pi+\epsilon d)(u)=\pi(u)+\epsilon d(u)$. Thus 
$$
T_\pi({\N'_2}^u)=\{d\in T_\pi(\M'_S):d(u)=0   \}.
$$
To write this LTS in terms of our matricial representations,  note that $d=d^\sharp$ gives $0=n(d(u),x)=n(u,d(x))$ for any $x\in V_\0$. 
Consequently $\hbox{Im}(d)\in u^\bot$ and so the first column in \eqref{ortaucropsert} must be zero because 
$ u\in V_\1$ can be taken as $v_0=\ell$ without loss of generality. Then $a_0=b_0=a_2-b_1=0$
and under the isomorphism $T_\pi(\M'_S)\cong\sl_3(\R)$ given in \eqref{truco}, the  triple subsystem $T_\pi({\N'_2}^\ell)$ transforms isomorphically to 
\begin{equation}\label{eq_triple2}
\begin{array}{ll}
T_\pi({\N'_2}^\ell)&=\{
{\tiny\begin{pmatrix}a_1 & a_2 & a_3\\a_2 & b_2 & b_3\\a_3 & b_3 &-a_1-b_2\end{pmatrix}}
\colon a_i,b_j\in \R\} \vspace{2pt}\\
&=\{A\in \mat\nolimits_{3,3}(\R):A=A^t,\textrm{tr}(A)=0\},
\end{array}
\end{equation}
that is, the subtriple of traceless symmetric matrices.
It is not difficult to check directly that this LTS is maximal in $\sl_3(\R)$ relative to the triple product \eqref{excplts} (which in this case agrees with the product $[\cdot,\cdot,\cdot]$ in \eqref{doslts} because of the symmetry of $T_\pi({\N'_2}^\ell)$).

\subsection{The two $4$-dimensional LTS}
For each  unit element $u\in\R^7$, consider
\begin{equation*}
    {\N'_3}^u := \{\pi\in\M'_S\colon \pi(u)=u\}.
\end{equation*}
Again for any $\pi\in {\N'_3}^u$ we have 
$$
T_\pi( {\N'_3}^u)=\{d\in T_\pi(\M'_S)\colon d(u)=0\}.
$$
This is not the same Lie triple system as above:   now $u\in V_{0,\pi'}$, so that $u$ plays the role of $\iu$, 
while in the above case, $u\in V_{1,\pi'}$  played the role of $\ell$. 

So, returning to \eqref{truco},  
if $d(\iu)=0$ we get $a_0=a_1=a_2=a_3=0$, so that
\begin{equation}\label{eq_triple3}
T_\pi( {\N'_3}^{\iu})\cong{\tiny
\left\{
\begin{pmatrix} 0 & 0 & 0\\ b_1 & b_2 & b_3\\ b_0 & b_3 & -b_2\end{pmatrix}\colon b_0, b_1, b_2, b_3\in\R\right\}.}
\end{equation} \smallskip


We still have to find the reflective LTS. For any 
  $\sigma\in \M'_S$, consider the centralizer of $\sigma$ in $\M'_S$:
\begin{equation*}
    {\N'_4}^\sigma:=\{\pi\in\M'_S\colon \pi\sigma=\s\pi\}
\end{equation*}
Arguing again with dual numbers, for a fixed $\pi\in{\N'_4}^\sigma$,  
 the tangent space  at $\pi$ is just the space of all 
$d\in T_\pi(\M'_S)$ such that $\pi+\epsilon d $ commutes with $\sigma$, that is, $d$ does.  
Hence,
$$
T_\pi({\N'_4}^\sigma)=\{d\in T_\pi(\M'_S)\colon d\s=\s d\}.
$$  

In order to write it in terms of matrices, take $  \iu$ a unit vector in $V_{0,\pi'}\cap V_{0,\sigma'}$,  $  \ju$ a unit vector in $V_{0,\pi'}\cap V_{1,\sigma'}$,
and $  \ell$ a unit vector in $V_{1,\pi'}\cap V_{0,\sigma'}$. 
If we denote, as usual, $\ku:=\iu\t\ju$, then the eigenspaces for $\s$,  $\R\iu\oplus\R\ell\oplus\R(\iu\times\ell)$ and $\R\ju\oplus\R\ku\oplus\R(\ju\times\ell)\oplus\R(\ku\times\ell)$, are $d$-invariant for any $d$ commuting with $\s$.
 Thus, for any $d\in T_\pi({\N'_4}^\sigma)$, 
 we have
 $d(\iu)\in V_{1,\pi'}\cap V_{0,\sigma'}=\langle\{\ell, \iu\t\ell\}\rangle$ so $a_2=a_3=0$;
 and $d(\ju)\in V_{1,\pi'}\cap V_{1,\sigma'}=\langle\{\ju\t\ell,\ku\t\ell\}\rangle$, so that $b_0=b_1=0$.
That is, the matrix of $d$ in \eqref{ortaucropsert} is of the form 
 \begin{equation*}\label{ortauc1}   
       { \tiny\begin{pmatrix} a_0 & a_1 & 0 & 0\\
0 & 0 & b_2 & b_3\\
0 & 0 & b_3-a_0 & -a_1-b_2\end{pmatrix}}.
    \end{equation*}
 Under the isomorphism $T_\pi(\M'_S)\cong\sl_3(\R)$ given in \eqref{truco}, 
the triple subsystem $T_\pi({\N'_4}^\sigma)$ transforms isomorphically to 
\begin{equation}\label{atalpzep} 
T_\pi({\N'_4}^\sigma)\cong\left\{{\tiny\begin{pmatrix}  s_1 & 0 & 0\\
 0 & s_2 & s_3\\
 0 & s_4 & -s_1-s_2\end{pmatrix}}\colon s_i\in\R\,\forall i=1\dots 4\right\}.
 \end{equation}
Note that this LTS is reflective since its orthogonal (relative to the scalar product in $\sl_3(\mathbb R)$ given by  
 \eqref{eq_metricase3}) is    
 \begin{equation}\label{gotro}
 \left\{{\tiny\begin{pmatrix}  0 & s_1 & s_2\\
 s_3 & 0 & 0\\
 s_4 & 0 & 0\end{pmatrix}}\colon s_i\in\R\,\forall i=1\dots 4\right\},
 \end{equation}
 which is easily seen to be another Lie triple subsystem of $\sl_3(\R)$. (Moreover,  the LTS in \eqref{gotro} and \eqref{atalpzep} are conjugated.)


\subsection{Conclusions}

We can give a second version of our main results, Theorem~\ref{pr_tgm_MSv1} and Theorem~\ref{pr_LTS_MSv1}, providing the maximal totally geodesic submanifolds  of  
$\M'_S=\{\pi\in \grass\nolimits_3(\R^7)\colon \Omega(\pi\o\pi\o\pi')=0\}$, that is, describing all these submanifolds in terms of  Grassmannians in a natural way, as well as the related maximal Lie triple subsystems of the LTS of traceless $3\t3$ matrices.

  \begin{corollary}\label{co_v2LTS}
    The maximal Lie triple  systems of  the exceptional LTS $(\sl_3(\R),\{\cdot,\cdot,\cdot \} )$ are, up to conjugation by an element in $\SO(3)$, the given ones  in 
    \eqref{eq_ppalenterminosdematrices},   \eqref{eq_triple2}, \eqref{eq_triple3}   and \eqref{gotro}. 
    \end{corollary}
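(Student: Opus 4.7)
The plan is to transport the classification in Theorem~\ref{pr_LTS_MSv1} through the LTS isomorphism of Proposition~\ref{pr_LTSconenvolventeg2}. Concretely, for $\pi\in\M'_S$ with $V=V_\0=\mathrm{Fix}(\pi)$, the chain $\m_4^V\cong T_\pi(\M'_S)\cong(\sl_3(\R),\{\cdot,\cdot,\cdot\})$ (the second isomorphism being $d\mapsto\hat d$, $\hat d(x)=\ell\times d(x)$ for a fixed unit $\ell\in V^\bot$) carries maximal subtriples to maximal subtriples. Hence the four conjugacy classes of maximal Lie triple subsystems of $\m_4^V$ classified in Theorem~\ref{pr_LTS_MSv1} yield four families of maximal subtriples of $(\sl_3(\R),\{\cdot,\cdot,\cdot\})$.

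Next I would match each of the four families with the corresponding equation; the concrete computations have already been performed in the preceding subsections. For the $2$-dimensional principal case, substituting the explicit generators from Eq.~\eqref{eq_unappal} into the transform \eqref{truco} produces \eqref{eq_ppalenterminosdematrices}. For the $5$-dimensional type, choosing the parameter $u=\ell$ (the same unit vector used to define the isomorphism) gives the symmetric traceless matrices \eqref{eq_triple2}. For the first $4$-dimensional type, the condition $d(\iu)=0$ with $\iu\in V_\0$ annihilates the first row and yields \eqref{eq_triple3}. For the second $4$-dimensional type, commutativity with the order-two automorphism attached to $W$ forces the block form \eqref{atalpzep}, which is $\SO(3)$-conjugate to its orthogonal complement \eqref{gotro}, as already noted in the subsection.

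For the qualification \emph{up to conjugation by $\SO(3)$}, I would verify that the natural action of $\SO(V_\0)\cong\SO(3)$ on $\sl(V_\0)=\sl_3(\R)$ by conjugation preserves the triple product $\{\cdot,\cdot,\cdot\}$: using $g^t=g^{-1}$ for $g\in\SO(3)$, the principal term $[m_1,m_2,m_3]$ of \eqref{doslts} is equivariant, and since $\alpha(gmg^{-1})=g\,\alpha(m)$ the correction term $\gamma$ is equivariant too. This $\SO(3)$ realizes the stabilizer of $\ell$ inside the isotropy group $H^V\cong\SO(V^\bot)$, acting on $T_\pi(\M'_S)$; within each type its orbits exhaust the relevant parameter space, appealing to Proposition~\ref{pr_lasadaptadas} for type~(1), to the transitivity of $\SO(V_\0)$ on unit vectors of $V_\0$ for type~(3), and, for types~(2) and~(4), to the observation that a change of the auxiliary reference vector used in the isomorphism $T_\pi(\M'_S)\cong\sl_3(\R)$ translates into an $\SO(3)$-conjugation of the image in $\sl_3(\R)$. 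The principal delicate point, in my view, is precisely this last transport argument for types~(2) and~(4).
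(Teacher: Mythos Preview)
Your overall strategy coincides with the paper's: transport Theorem~\ref{pr_LTS_MSv1} through the isomorphism of Proposition~\ref{pr_LTSconenvolventeg2} and then account for the remaining freedom. The paper's own justification is the single sentence after the corollary (``we have given the matricial descriptions in terms of a fixed orthonormal basis of $V_0$, but we could have chosen any other orthonormal basis''), so your write-up is in fact more careful than the paper's.

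However, the ``principal delicate point'' you flag is a genuine gap, and your proposed resolution does not work. Changing the auxiliary unit vector $\ell$ does \emph{not} translate into an $\SO(3)$-conjugation on $\sl_3(\R)$. Concretely, take $\ell'=v_1=\iu\times\ell$. The maximal subtriple $\h_2^{\ell'}\cap\m_4^V$ consists of those $d$ with $d(v_1)=0$, i.e.\ (by self-adjointness) $a_1=b_1=a_3+b_0=0$ in the notation of \eqref{ortaucropsert}; under \eqref{truco} its image is $\{m\in\sl_3(\R): m e_1=0\}$, the traceless matrices with vanishing first column. One checks directly from \eqref{excplts}--\eqref{doslts} that this $5$-dimensional subspace is closed under $\{\cdot,\cdot,\cdot\}$ (each summand ends in some $m_i$, hence kills $e_1$), so it is a maximal Lie triple subsystem. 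But it is \emph{not} $\SO(3)$-conjugate to the symmetric traceless matrices of \eqref{eq_triple2}: the $\SO(3)$-invariant quadratic form $m\mapsto\tr(m^2)$ is positive definite on \eqref{eq_triple2} yet vanishes on $E_{12}$, which lies in the first-column-zero subspace. The same obstruction affects type~(1): two principal subalgebras adapted to $V$ are $H^V$-conjugate, but there is no reason they should be conjugate under the smaller group $H^V\cap H^\ell$.

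What \emph{is} true is that all these subtriples are conjugate under the full automorphism group of $(\sl_3(\R),\{\cdot,\cdot,\cdot\})$, which contains the image of the adjoint action of $H^V\cong\SO(4)$ on $\m_4^V$; the $\SO(3)$ singled out by you and by the paper is only the stabiliser of the reference vector $\ell$ inside $H^V$. So the corollary is correct if read as ``up to automorphism of the LTS'', but the phrase ``up to conjugation by an element in $\SO(3)$'' is too restrictive as written, and neither your argument nor the paper's one-line remark closes this gap.
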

    Simply recall that we have given the matricial descriptions in terms of a fixed  orthonormal basis of $V_0$, but we could have chosen any other orthonormal basis.
    
\begin{corollary}\label{co_v2}
    If $\N$ is a maximal totally geodesic submanifold of $\M'_S$, then $\N$ equals either

   \begin{enumerate}
     \item  $ {\N'_1}^\h=\{\pi\in\M'_S: p_\pi(\h)\subset\h\}$, for some principal subalgebra $\h $; or
        \item   $ {\N'_2}^u=\{\pi\in\M'_S\colon \pi(u)=u\}$ or $ {\N'_3}^u
        =\{\pi\in\M'_S\colon \pi(u)=0\}$, for some   unit vector $u\in\R^7$; or
        \item   $ {\N'_4}^\sigma=\{\pi\in\M'_S\colon \pi\sigma=\s\pi\}$, for some $\s\in\M'_S$.
        \end{enumerate}
\end{corollary}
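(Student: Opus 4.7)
The plan is to transport Theorem~\ref{pr_tgm_MSv1} across the $G_2$-equivariant diffeomorphism $\Phi\colon\M_S\to\M'_S$, $V\mapsto\pi_V$, that identifies the two realizations of the symmetric space (see Section~\ref{sub_M_S}). Being a $G_2$-equivariant isometry between symmetric spaces, $\Phi$ sends maximal totally geodesic submanifolds to maximal totally geodesic submanifolds, so every such submanifold of $\M'_S$ has the form $\Phi(\N_i^\ast)$ for some $\N_i^\ast$ appearing in Theorem~\ref{pr_tgm_MSv1}, and the task reduces to identifying each $\Phi(\N_i^\ast)$ with the corresponding set ${\N'_i}^\ast$ in the statement.

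For cases (2) and (3) the identifications are immediate from the defining property of the orthogonal projection $\pi_V$: the condition $u\in V^\bot$ is equivalent to $\pi_V(u)=0$, while $u\in V$ is equivalent to $\pi_V(u)=u$ (the labels on the two sets in the statement of the corollary are the reverse of those introduced in the subsections on the $5$- and $4$-dimensional LTS, but this does not affect the argument). For case (1), the notion of an associative subalgebra $V\in\M_S$ being adapted to a principal subalgebra $\h\subset\g_2$ was already rephrased at the start of Section~\ref{se_2dimLTS_v2} as $\h$ being graded relative to the $\Z_2$-grading of $\mathfrak{gl}(7,\R)$ induced by $\pi_V$, equivalently $p_{\pi_V}(\h)\subset\h$; hence $\Phi(\N_1^\h)={\N'_1}^\h$ holds by definition.

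The one case needing a brief computation is (4). From Equation~\eqref{pasapa} in the proof of Theorem~\ref{pr_tgm_MSv1} we have $\N_4^W=\{V\in\M_S:\theta_V\theta_W=\theta_W\theta_V\}\setminus\{W\}$. Using $\theta_V=2\pi_V-I$ and $\theta_W=2\pi_W-I$, commutativity of the two involutions is equivalent to commutativity of the two projections, so $\Phi(\N_4^W)=\{\pi\in\M'_S:\pi\pi_W=\pi_W\pi\}\setminus\{\pi_W\}={\N'_4}^{\pi_W}\setminus\{\pi_W\}$. I expect the main (and in fact minor) obstacle to be the bookkeeping around the isolated fixed point $\pi_W$ in the full commutant, a polar-type phenomenon in the spirit of Remark~\ref{re_polar}: removing it does not alter the connected submanifold structure, and a sanity check via the matricial description~\eqref{atalpzep}, of dimension $4$ in agreement with $\dim(\h_4^W\cap\m_4^V)$, confirms both the identification and the maximality.
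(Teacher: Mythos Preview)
Your proof is correct and follows essentially the same route as the paper: the corollary is stated without proof because Sections~\ref{se_2dimLTS_v2}--5.4 already compute, for each family ${\N'_i}^\ast$, the tangent Lie triple subsystem and match it with the corresponding $\N_i^\ast$ of Theorem~\ref{pr_tgm_MSv1}, so the classification transfers via the identification $V\leftrightarrow\pi_V$ set up in Section~\ref{sub_M_S}. You have made this transfer explicit through the equivariant diffeomorphism $\Phi$, which is exactly the intended mechanism.

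Two small remarks. First, you correctly spotted that the labels $(2)$ and $(3)$ in the corollary are swapped relative to the definitions of ${\N'_2}^u$ and ${\N'_3}^u$ given earlier in Section~5; this is a typographical inconsistency in the statement itself, not a defect in your argument. Second, your handling of case~(4) is more careful than the paper's: the set ${\N'_4}^\sigma$ as written contains $\sigma$ as an isolated point (at $\pi=\sigma$ the conditions $d\sigma+\sigma d=d$ and $d\sigma=\sigma d$ force $d=0$), so the maximal connected totally geodesic submanifold is indeed ${\N'_4}^\sigma\setminus\{\sigma\}$, exactly as you obtained from $\Phi(\N_4^W)$ via Equation~\eqref{pasapa}.
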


\subsection*{Acknowledgement}

The authors would like to thank Alfonso Romero, Alberto Rodr\'\i guez V\'azquez and the various referees for their useful suggestions and encouraging reading.

 \bibliographystyle{plain}

\bibliography{ref}
\end{document}